\newtheorem{theorem}{Theorem}
\newtheorem{lemma}[theorem]{Lemma}
\newtheorem{proposition}[theorem]{Proposition}
\newtheorem{corollary}[theorem]{Corollary}
\theoremstyle{definition}
\newtheorem{definition}[theorem]{Definition}
\newtheorem{remark}[theorem]{Remark}
\newtheorem{example}[theorem]{Example}
\newcommand{\teletype}[1]{\ensuremath{\mathtt{#1}}}
\newcommand{\systemname}[1]{\teletype{\color{darkgray}#1}\xspace}
\newcommand{\CubicalAgda}{\systemname{Cubical} \systemname{Agda}}
\definecolor{Revolutionary}{RGB}{232,70,68}
\newcommand{\anum}[1]{\AgdaNumber{#1}}
\newcommand{\func}[1]{\AgdaFunction{#1}}
\newcommand{\con}[1]{{\AgdaInductiveConstructor{\ensuremath{\mathsf{#1}}}}}
\newcommand{\ptrunc}[1]{\lVert{#1}\rVert}
\newcommand{\Square}[5]{
\newcommand{\maxVal}{{(29 * (#1)) / 30}}
\newcommand{\minVal}{{(#1) / 30}}
\begin{center}
    \begin{tikzpicture}
    \draw[->] (0 , \minVal) -- (0 , \maxVal);
    \node[anchor = east] at (0 , #1 / 2) {#2} ;

    \draw[->] (\minVal , #1) -- (\maxVal , #1) ;
    \node[anchor = south] at (#1 / 2 , #1) {#3} ;

    \draw[->] (#1 , \minVal) -- (#1 , \maxVal) ;
    \node[anchor = west] at (#1 , #1 / 2) {#4} ;

    \draw[->] (\minVal , 0) -- (\maxVal , 0) ;
    \node[anchor = north] at (#1 / 2 , 0) {#5} ;

    \end{tikzpicture}
\end{center}
\let\maxVal\undefined
\let\minVal\undefined
}
\newlength{\LETTERheight}
\newcommand{\bZ}{\ensuremath{\mathbb{Z}}}
\newcommand{\bN}{\ensuremath{\mathbb{N}}}
\newcommand{\sphere}[1]{\ensuremath{\mathbb{S}^{#1}}}
\newcommand{\RP}{\ensuremath{\mathbb{R}P}}
\newcommand{\CP}{\ensuremath{\mathbb{C}P^2}}
\newcommand{\refl}{\textsf{refl}}
\newcommand{\inr}{\textnormal{\textsf{inr}}}
\newcommand{\inl}{\textnormal{\textsf{inl}}}
\newcommand{\trunc}[1]{\ensuremath{\left| #1\right|}}
\newcommand{\truncT}[2]{{\ensuremath \lVert #2 \rVert}_{#1}}
\newcommand{\fib}{\ensuremath{\mathsf{fib}}}
\newcommand{\Susp}[1]{\ensuremath{\Sigma{#1}}}
\definecolor{dkblue}{rgb}{0,0.1,0.5}
\definecolor{lightblue}{rgb}{0,0.5,0.5}
\definecolor{dkgreen}{rgb}{0,0.6,0}
\definecolor{dkbrown}{rgb}{0.4,0,0}
\definecolor{dkviolet}{rgb}{0.6,0,0.8}
\newcommand{\mycomment}[3]{}
\newcommand{\todo}[1]{\mycomment{red}{TODO: }{#1}}
\newcommand{\sq}[1]{\textnormal{\ensuremath{\mathsf{Sq}^{#1}}}}
\newcommand{\sqind}[2]{\textnormal{\ensuremath{\mathsf{Sq}_{#2}^{#1}}}}
\newcommand{\totSq}{\textnormal{\ensuremath{\widehat{\mathsf{Sq}}}}}
\DeclareMathOperator*{\bigast}{\raisebox{-0.6ex}{\scalebox{2.5}{$\ast$}}}
\DeclareMathOperator*{\bigsmile}{\raisebox{-0.ex}{\scalebox{1.5}{$\smile$}}}
\newcommand{\UU}{\mathcal{U}}
\newcommand{\two}{\mathbbm{2}}
\newcommand{\bZtwo}{\bZ/2\bZ}
\newcommand{\pt}{\textup{pt}}
\newcommand{\RPinf}{\textnormal{\ensuremath{\mathbb{R}P^\infty}}}
\newcommand{\bOne}{\ensuremath{\mathbbm{1}}}
\newcommand{\pairfun}[2]{\textnormal{\ensuremath{{\mathsf{Elim}^{#1}_{#2}}}}}
\newcommand{\fst}{\textnormal{\textsf{fst}}}
\newcommand{\snd}{\textnormal{\textsf{snd}}}
\newcommand{\gysin}{\textnormal{\textsf{Gys}}}
\newcommand{\myitem}[1]{%
\item[#1]\protected@edef\@currentlabel{#1}%
}
\newcommand{\deppath}[3]{
\newlength{\myl}
\settowidth{\myl}{\text{\ensuremath{#3}}}
\begin{tikzcd}[ampersand replacement=\& , column sep = \the\myl]
{#1} \& {#2}
\arrow["{\tiny{#3}}", from=1-1, to=1-2 , squiggly]
\end{tikzcd}
}
\begin{document}

\title{The Steenrod squares via unordered joins}

\author{\IEEEauthorblockN{Axel Ljungström}
 \IEEEauthorblockA{Department of Mathematics\\
   Stockholm University\\
   Email: axel.ljungstrom@math.su.se}
 \and
\IEEEauthorblockN{David Wärn}
 \IEEEauthorblockA{Department of Computer Science and Engineering\\
   University of Gothenburg and Chalmers University of Technology\\
   Email: warnd@chalmers.se}}

\IEEEoverridecommandlockouts

\maketitle
\IEEEpeerreviewmaketitle
\pagestyle{plain}

\begin{abstract} 

The Steenrod squares are cohomology operations with important applications in
algebraic topology. While these operations are well-understood classically,
little is known about them in the setting of homotopy type theory.  Although a
definition of the Steenrod squares was put forward by Brunerie (2017), proofs
of their characterising properties have remained elusive. In this paper, we
revisit Brunerie's definition and provide proofs of these properties, including
stability, Cartan's formula and the Adem relations. This is done by studying a
higher inductive type called the unordered join. This approach is inherently
synthetic and, consequently, many of our proofs differ significantly from their
classical counterparts. Along the way, we discuss upshots and limitations of
homotopy type theory as a synthetic language for homotopy theory. The paper is
accompanied by a computer formalisation in Cubical Agda.

\end{abstract}

\section{Introduction}
\label{sec:intro}

Homotopy type theory (HoTT) is an extension of Martin-Löf type theory based on
the idea of treating \emph{types} as $\infty$-groupoids, or
\emph{spaces}. While HoTT only gained attention as recently as 2012,
$\infty$-groupoids themselves are important mathematical objects and have
long been fruitfully studied using the many tools of algebraic topology and
homotopy theory. A key question is to what extent these tools can be made
to work with the language of HoTT, and whether HoTT can provide new
insights going beyond classical homotopy theory. By now, there is an
established line of research, dubbed \emph{synthetic homotopy theory}, dedicated
to answering these questions.  The promises of synthetic homotopy theory
include conceptual clarity, semantic generality (an argument expressed in
HoTT automatically applies in many models, including arbitrary
$\infty$-topoi~\cite{Shulman19}), and amenability to computer formalisation, but it
also comes with its own set of limitations.

A fundamental tool in homotopy theory is that of \emph{cohomology}, 
and a fundamental tool in making sense of cohomology is that of 
\emph{cohomology operations}. These are ways of constructing new cohomology classes
from old ones, and they give the cohomology of any space a rich structure.
The purpose of this paper is to study an important family of cohomology operations,
the \emph{Steenrod squares}, in HoTT.
Although a good deal of work has been done setting up the foundations of cohomology
in synthetic homotopy theory, from Eilenberg--MacLane spaces~\cite{LicataFinster14},
cohomology groups~\cite{BLM22}, cup products \cite{LLM23,LM24}, and cellular 
cohomology~\cite{BuchholtzFavonia18}, to Gysin sequences~\cite{Brunerie16}
and spectral sequences~\cite{FlorisPhd}, the Steenrod squares have so far only been
\emph{defined} in HoTT in a short text by Brunerie~\cite{Brunerie17}.
The Steenrod squares are classically known to satisfy a list of properties
that are not easily read off from their definition but are important
for applications, and these properties have remained elusive in synthetic homotopy
theory. In this work, we will prove all these properties.

We have computer
\href{https://github.com/caripoulet974/cubical/blob/master/Cubical/Papers/Steenrod.agda}{formalised}
a large part of the project (including all key technical results) in
\CubicalAgda, a proof assistant implementing a flavour of HoTT called
cubical type theory. We emphasise, however, that this paper is
agnostic with respect to HoTT flavour and is written in the
implementation-agnostic informal style of the HoTT book~\cite{HoTT13}.

Two themes feature prominently in this work. The first theme is that of
constructions which depend on an arbitrary 2-element type~\cite{Brunerie17, Pairs}.
Here, a type is said to be a 2-element type if it \emph{merely} is equivalent
to the standard 2-element type $\two$ (i.e. $\{0,1\}$).
We denote the type of all 2-element types by $\RPinf$.
By univalence, any construction that depends on an arbitrary 2-element type (i.e.\ indexed by \RPinf) automatically
respects automorphisms of 2-element types, and in this way we get a synthetic approach to 
what is classically known as equivariant homotopy theory. The second theme is that of higher inductive types;
of particular importance to us are joins and smash products.
The meat of our work consists of studying the interaction of these two themes:
unordered joins and smash products and their properties.

To set the stage and state the main result of this paper, let us briefly
revisit the work of Brunerie. Brunerie defines the $n$th Steenrod square, a map
$H^m(X,\bZtwo) \to H^{m+n}(X,\bZtwo)$, directly on Eilenberg--MacLane spaces:
in HoTT, elements of $H^m(X,\bZtwo)$
are represented by functions $X \to K_m$ where $K_m\coloneqq K(\bZtwo,m)$
denotes the $m$th Eilenberg--MacLane space of $\bZtwo$, 
and so the Steenrod square is given by a map $\sq{n} : K_m \to K_{m+n}$.
This map is defined as a composition:
\begin{align}\label{eq:sqdef}
  K_m \to (\RPinf \to K_{2m}) \xrightarrow{\sim} \Pi_{i \leq 2m} K_i \xrightarrow{\mathsf{proj}_{m+n}} K_{m+n}
  .
\end{align}
Here, the first map is defined using \emph{unordered smash products}, and the second equivalence comes from 
the Thom isomorphism theorem~\cite[Section 6.1]{Brunerie16}.
While this construction is elegant, it has turned out to be difficult to analyse. 
In particular, one would like to know that the Steenrod squares satisfy the properties listed below;
proving these is the primary contribution of this paper.
\begin{theorem}[The Steenrod squares, axiomatically]
  \label{thm:main}
  There is a set of pointed maps $\sqind{n}{m} : K_m \to_\pt K_{m+n}$ for $m,n \geq 0$, called the Steenrod squares, 
  usually written $\sq{n}$ leaving the $m$ implicit, which satisfy the following identities.
  \begin{itemize}[align=parleft, labelsep=.7cm]
  \myitem{\textnormal{(I1)}} \,$\sq{0}(x) = x$ \label{ax1}
  \myitem{\textnormal{(I2)}} \,$\sqind{n}{m}(x) = 0$ if $n > m$ \label{ax2}
  \myitem{\textnormal{(I3)}} \,$\sqind{n}{n}(x) = {x}\smile{x}$ \label{ax3}
  \myitem{\textnormal{(C)}} \,$\sq{n}(x\! \smile \!y) = \!\!\!\!\underset{i + j = n}{\sum}\!\!{\sq{i}(x)\! \smile \!\sq{j}(y)}$ (the Cartan formula) \label{ax4}
  \end{itemize}
  In addition, the squares are stable and satisfy the Adem relations:
  \begin{itemize}[align=parleft, labelsep=.7cm]
  \myitem{\textnormal{($\Omega$)}} The $n$th square $\sqind{n}{m} : K_m \to_\pt K_{m+n}$ is also given by \label{ax:susp}
    \begin{center} ${K_m \xrightarrow{\sim} \Omega(K_{m+1}) \xrightarrow{\Omega(\sqind{n}{m+1})} \Omega(K_{(m+1)+n}) \xrightarrow{\sim} K_{m + n}}$.
    \end{center}
  \myitem{\textnormal{(A)}} The Adem relations are satisfied: for $n<2k$, we have \label{ax:adem}
    \begin{center}$
	  \sq{n} \circ \sq{k} = \sum^{\lfloor{n/2}\rfloor}_{i = 0} {{k-i-1}\choose{n-2i}}\sq{n+k-i}\circ\sq{i}.
	  $\end{center}
  \end{itemize}
\end{theorem}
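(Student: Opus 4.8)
The plan is to take Brunerie's definition~\eqref{eq:sqdef} as given and to verify the six properties in turn, with the higher inductive type of unordered joins as the main technical tool. First I would recall the construction in detail: for a $2$-element type $A$ there is an \emph{unordered smash product} $\bigwedge_{a : A} K_m$, a higher inductive type whose universal property internalises the symmetry of the square, and a point $x : K_m$ determines a canonical ``unordered square'' in it which the iterated multiplication of $K_m$ sends to $K_{2m}$; letting $A$ range over $\RPinf$ gives the first map of~\eqref{eq:sqdef}. The Thom isomorphism $(\RPinf \to K_{2m}) \xrightarrow{\sim} \Pi_{i \leq 2m} K_i$ I would obtain from the computation that $\bZtwo$-cohomology turns $\RPinf$ into the polynomial ring on a degree-$1$ class, which splits the mapping space into its graded pieces; this is the step where unordered \emph{joins} enter, since $\RPinf$ is itself assembled from iterated unordered joins of $2$-element types, which is what makes the splitting manageable in the synthetic setting. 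Composing with $\mathsf{proj}_{m+n}$ gives $\sqind{n}{m}$ for $n \leq m$, and for $n > m$ we set $\sqind{n}{m} := 0$; this last convention is (I2), while pointedness is immediate because the unordered square of the basepoint is the basepoint.

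Axioms (I1) and (I3) pin down the two extreme graded components of the total square $\mathsf{Sq}(x) : \Pi_{i \leq 2m} K_i$, and I would establish them directly from the universal property of the unordered smash product. For (I3): the top component, of degree $2m$, is obtained by restricting the classifying map $\RPinf \to K_{2m}$ along the basepoint, where the unordered square degenerates to the ordinary binary product, so the component is $x \smile x$. For (I1): the bottom component, of degree $m$, is the coefficient of the top power of $\alpha$ occurring in the total square, and a short calculation in the unordered smash identifies the resulting map $K_m \to K_m$ with the identity.

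The Cartan formula (C) I would deduce from multiplicativity of the total square, $\mathsf{Sq}(x \smile y) = \mathsf{Sq}(x) \smile \mathsf{Sq}(y)$, which unwinds into (C) once graded pieces are collected; multiplicativity itself follows from a distributivity law relating the unordered smash product to the cup product, the unordered square of $x \smile y$ being built from those of $x$ and $y$ via the monoidal structure, compatibly with the maps to $K_{2m}$ and the Thom isomorphism. Stability ($\Omega$) I would prove by checking that the unordered square construction commutes with the suspension equivalence $K_m \xrightarrow{\sim} \Omega K_{m+1}$; since the Thom isomorphism and the projections are likewise suspension-compatible, so is the whole composite defining $\sqind{n}{m}$.

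The main obstacle is the Adem relations (A). Here the idea is to iterate the unordered construction: $\sq{n} \circ \sq{k}$ can be re-expressed by applying the unordered-square map twice, which produces a class indexed not by $\RPinf$ but by the type of $4$-element types equipped with an unordered splitting into two $2$-element types --- the classifying object for the wreath product $\Sigma_2 \wr \Sigma_2$. Comparing this with the analogous construction indexed by bare $4$-element types, using the forgetful map between the two and an explicit description of the relevant $\bZtwo$-cohomology, yields a relation among the iterated squares, from which the stated identity --- in particular the coefficients $\binom{k-i-1}{n-2i}$ and the hypothesis $n < 2k$ --- drops out by a combinatorial computation. This is precisely where unordered \emph{joins} are indispensable: unlike smash products, joins iterate cleanly and carry the connectivity needed to control the mapping spaces out of these iterated configuration objects, so most of the work goes into developing the join machinery that makes the comparison go through.
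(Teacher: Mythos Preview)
Your outline has the right shape, but it differs from the paper in structure and in where the technical weight lies, and at least one step looks like a genuine gap.

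The paper's organising principle is a single \emph{Fubini theorem for the unordered cup product} (\cref{thm:cup-fubini}): for any $X,Y:\RPinf$, $\bigsmile_{x}\bigsmile_{y} f(x,y) = \bigsmile_{y}\bigsmile_{x} f(x,y)$. Both Cartan and Adem fall out of this one statement --- Cartan by taking $Y=\two$ to get $(a\smile b)^X = a^X\smile b^X$, and Adem by taking $f$ constant to get $(a^X)^Y = (a^Y)^X$ and then comparing the resulting two-variable polynomials in $t(X),t(Y)$. The unordered join enters \emph{only} in the proof of this Fubini theorem (via \cref{lem:fubini}), not in the Thom/Gysin step, which is handled by an elementary induction using \cref{lem:gysin} and \cref{lem:h-space-pt-map}. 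Your proposal scatters the work: a bespoke distributivity argument for Cartan, and for Adem a classical $\Sigma_2\wr\Sigma_2$-versus-$\Sigma_4$ comparison. The latter is the traditional route, but in HoTT the required cohomology of $B(\Sigma_2\wr\Sigma_2)$ and the transfer comparison are not available off the shelf; the paper's $(a^X)^Y=(a^Y)^X$ route sidesteps this entirely.

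The dependency order also matters. You propose proving \ref{ax1} directly by a ``short calculation in the unordered smash''; the paper cannot do this, and instead proves Cartan $\Rightarrow$ stability $\Rightarrow$ \ref{ax1}. Concretely, stability is \emph{derived from Cartan}: one writes $\sq{n}(e(x)\smile a)$ with $e:\sphere{1}\to K_1$, expands by Cartan, and uses $\sq{0}(e(x))=e(x)$ (from the computation $t(X)^X=0$, \cref{lem:exp-self}) together with $e(x)^2=0$. Only then is \ref{ax1} obtained by delooping down to $K_0$ where squaring in $\bZtwo$ is the identity. Your direct approach to \ref{ax1} would need to identify, for arbitrary $m$, the coefficient of $t(X)^m$ in $a^X$ as $a$ itself --- this is not a formality, and without stability I do not see how to do it. Likewise, your proposed direct proof of stability (commutation of $(-)^X$ with $\sigma_m$) is plausible but is not what the paper does and would itself require nontrivial control of the unordered cup under looping.
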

In \ref{ax3} and \ref{ax4} above, the symbol $\smile$ denotes the cup product map 
$K_m \to_\pt K_n \to_\pt K_{m+n}$ obtained from
the ring structure on $\bZtwo$.
\subsection{Contributions and outline}
The main contribution of this paper is a proof of the properties of Steenrod
squares listed in \cref{thm:main}. Before discussing Steenrod squares, we discuss, in
\cref{sec:pairs}, some background material about $\RPinf$ and
unordered pairs. Then in \cref{sec:maindef}, we present a variant of
Brunerie's definition of the Steenrod squares, via what we call unordered cup products.
We prove all the properties of Steenrod squares listed in \cref{thm:main}, 
modulo a technicality dealt with in the following section. 
\cref{sec:fubini} concerns properties of unordered HITs and constitutes 
the technical core of this paper.
In \cref{sec:Einf} we discuss how a good theory of $E_\infty$-monoids
in type theory would have significantly simplified our work.
In \cref{sec:applications} we showcase an application of Steenrod squares toward
analysing $\pi_4(\sphere{3})$.
Finally, in \cref{sec:conclusion}, we mention some open questions.

\subsection{Notation and basic definitions}
Let us briefly introduce some notation while also recalling some basic constructions from HoTT which will be used in the paper. The reader familiar with HoTT should be able to safely skim or even skip this part. 

\paragraph{Pi- and sigma-types} Let $B:A \to \UU$ be a dependent type -- here $\UU$ denotes the universe of types (at some implicit universe level). We often use the notation $(a : A) \to B\,a$ and $(a : A) \times B\,a$ for $\Pi_{a:A}B(a)$ and $\Sigma_{a:A}B(a)$ respectively.
We may sometimes write $B^A$ for the non-dependent function type $A \to B$.

\paragraph{Equality} We write $x = y$ for the type of paths from $x$ to $y$. We use $x \coloneqq y$ for definitions. The constant path (reflexivity) is denoted by $\refl_x : x = x$. We use \emph{path induction} to refer to the usual induction principle for identity types in MLTT.

\paragraph{Equivalences and univalence} 
A type $X$ is said to be \emph{contractible} if there is some $x:X$ s.t. for all $x': X$, we have that $x' =x$. Given a map $f: A \to B$ we define its fibre over some point $b:B$ by $\fib_f(b) \coloneqq (a : A)\times(f(a) = b)$. We say that $f$ is an equivalence if $\fib_f(b)$ is contractible for each $b:B$. In this case, we simply write $f:A \simeq B$ and leave the contractibility proof implicit. 

There is a canonical map $\mathsf{coe} : X = Y \to X \simeq Y$ defined by path induction, sending $\refl_X$ to the identity $\mathsf{id}_X : X \simeq X$. The univalence axiom says that $\mathsf{coe}$ itself is an equivalence. In particular, this means that if two types are equivalent, then they are equal.

\paragraph{Pointed structures} A pointed type $(A,\pt_A)$, i.e.\ a type $A$ equipped with a basepoint $\pt_A:A$, will often simply be written $A$, i.e.\ with the basepoint left implicit. We use the same convention for pointed functions and simply write $f : A \to_\pt B$ to mean a pair $(f,\pt_f)$ where $f: A \to B$ is a plain function and $\pt_f : f(\pt_A) = \pt_B$ is a proof that $f$ is basepoint preserving.

\paragraph{Loop spaces} We define the \emph{loop space} of a pointed type $A$ by $\Omega(A) \coloneqq (\pt_A = \pt_A)$. This construction is itself pointed by $\refl_{\pt_A}$ and can thus be iterated by inductively defining $\Omega^{n+1}(A) \coloneqq \Omega^{n}(\Omega(A))$.

\paragraph{H-levels} We say that a type $A$ is a \emph{$(-2)$-type} if it is contractible and, inductively, that it is an $n$-type if $x = y$ is an $(n-1)$-type for all $x,y:A$. Apart from $(-2)$-types, special names are also given to $(-1)$-types and $0$-types; these are, respectively, called \emph{propositions} and \emph{sets}.

\paragraph{Truncations} We write $\truncT{n}{A}$ for the \emph{$n$-truncation of $A$}, the canonical way of forcing $A$ to become an $n$-type. This is a type equipped with an inclusion of points $\trunc{-} : A \to \truncT{n}{A}$ whose induction principle say that the map $((x : \truncT{n}{A}) \to B(x)) \to ((a : A) \to B\trunc{a})$ is an equivalence whenever $B: \truncT{n}{A} \to \UU$ is a family of $n$-types. It is implemented using a recursive HIT~\cite[Section 7.3]{HoTT13}, but we do not need the implementation details here.

\paragraph{Connectedness} We say that a type is $n$-connected if $\truncT{n}{A}$ is contractible. We say that a function $f: A \to B$ is $n$-connected if all of its fibres are $n$-connected.

\paragraph{Pushouts} 
Given a span $Y \xleftarrow{f} X \xrightarrow{g} Z$, we define its (homotopy)
pushout, $Y \sqcup^X Z$, to be the HIT generated by two point constructors
$\inl{} : Y \to Y \sqcup^X Z$ and $\inr{} : Z \to Y \sqcup^X Z$, as well as
one higher constructor $\mathsf{push} : (x : X) \to \inl{(f\,x)} =
\inr{(g,x)}$. An important special case of pushouts is the
\emph{suspension} of a type $X$, written $\Sigma X$, which we define by
$\Sigma X \coloneqq \bOne \sqcup^X \bOne$. Another important construction is
the \emph{smash product} of two pointed types, denoted $X \wedge Y$. We
define it here as the pushout $(\mathbbm{1} + \mathbbm{1}) \sqcup^{X + Y}
X\times Y$. 
The \emph{join} of types is another example of a pushout which plays a
key role for us. The join of $X \ast Y$ of types $X, Y$ is
defined to be the pushout $X \sqcup^{X \times Y} Y$ of the
span
$X \xleftarrow{\mathsf{fst}} X \times Y \xrightarrow{\mathsf{snd}} Y$.
We take all pushouts to be pointed, whenever possible, by $\inl(\pt_X)$.

\paragraph{Eilenberg--MacLane spaces and cohomology} 
Given an abelian group $G$ and a natural number $n$, we denote by $K(G,n)$ the $n$th 
\emph{Eilenberg--MacLane space}, or \emph{delooping}, of $G$~\cite{LicataFinster14}. 
It is characterised as the unique pointed $(n-1)$-connected $n$-type
whose $n$th loop space $\Omega^n K(G,n)$ is isomorphic, as a group, to $G$.
It follows that we have pointed equivalences $\sigma_n : K(G,n) \simeq_\pt \Omega K(G, n+1)$.
In this way it is clear that $K(G,n)$ has an associative and commutative H-space structure
(corresponding to path composition in $\Omega K(G, n+1)$), denoted
$+ : K(G, n) \to K(G, n) \to K(G, n)$.

If moreover $R$ is a ring, then we have a \emph{cup product} $\smile : K(R, n)
\to_\pt K(R, m) \to_\pt K(R, n+m)$ 
which is graded-commutative and associative.
The defining property of the cup product is that in degree $(0,0)$, i.e.\ as
a map $K(R,0) \to K(R,0) \to K(R,0)$, it simply corresponds to multiplication in $R$,
and that the cup product respects looping in the following sense. For a fixed $a : K_n$,
consider the pointed map given by cupping with $a$, i.e.\ $(-)\smile_m a : K_m \to_\pt K_{m+n}$.
We have $\Omega ((-) \smile_{m+1} a) \circ \sigma_m = \sigma_{m+n} \circ ((-)\smile_m a)$. This is proved in detail in~\cite[Lemma 29]{LM24}.

The $n$th cohomology group of a type $X$ with coefficients in an abelian group $G$ is given by $H^n(X,G) \coloneqq \truncT{0}{X \to K(G,n)}$. 
The types $H^\bullet(X,G)$ are abelian groups by pointwise addition in
$K(G,n)$, and moreover form a graded ring if $G$ has the structure of a ring.






\section{Unordered pairs and commutativity structures}
\label{sec:pairs}
One of the main themes in this paper, crucial for the treatment of Steenrod squares,
is that of constructions that depend on 2-element type.
Recall that we write $\two$ for the standard 2-element type, with two distinct elements $0$ and $1$.
A general type $X$ is said to be a 2-element type if we have $\ptrunc{X \simeq \two}$.
Thus any 2-element type is merely equivalent to $\two$, but it has no preferred enumeration.
Recall also that we write $\RPinf$ for the type of all 2-element types, or more explicitly
\[
	\RPinf \coloneqq (X : \UU) \times \ptrunc{X \simeq \two}.
\]
We treat $\RPinf$ as a pointed type with basepoint $(\two, \trunc{\mathsf{id_{\two}}})$.
Buchholtz and Rijke \cite{realproj} have shown that $\RPinf$ is the sequential colimit of the finite-dimensional
real projective spaces $\RP^n$, hence the notation.
Given a term $X : \RPinf$ we will conflate $X$ with its underlying type $\mathsf{fst}(X) : \UU$.

For us, $\RPinf$ is significant because it captures the idea of commutativity in a synthetic
and homotopy coherent manner.
Consider, for example, the symmetry of cartesian products, $A_0 \times A_1 \simeq A_1 \times A_0$.
This can be explained as follows using $\RPinf$.
The binary cartesian product can be seen as a $\two$-indexed dependent product,
$\Pi_{i : \two} A(i)$ for $A : \two \to \UU$.
More generally, for any $X : \RPinf$ and $A : X \to \UU$, we can consider the
product $\Pi_{i : X} A(i)$. Clearly this reduces to the binary cartesian product
if $X$ is $\two$. Now $\two$ has a self-equivalence $\neg : \two \to \two$
which swaps $0$ and $1$, and by univalence this induces a loop $\pt = \pt$
of the basepoint of $\RPinf$.
By action on paths, this induces an equivalence
$\Pi_{i : \two} A(i) \simeq \Pi_{i : \two} A(\neg i)$ for any $A : \two \to \UU$,
which reduces to commutativity of the cartesian product.

In general, our usage of $\RPinf$ follows the same pattern. The point is to
first generalise some construction which normally would operate on ordered
pairs -- like the cartesian product $A_0 \times A_1$ computed from the pair
$(A_0, A_1) : \UU \times \UU$ -- to a construction that depends on an
`unordered pair'. By an unordered pair (of elements of $A$), we mean a map 
$a : X \to A$ where $X : \RPinf$ and $A$ is some arbitrary type.
We often write $A^X$ to emphasise that it should be thought of as a generalisation
of $A^2$ -- one might also say that $A^X$ is a `twisted' version of $A^2$.

The upshot is that whenever we write down a construction indexed by $\RPinf$,
we automatically gain information about the symmetry of said construction.
In the case of the cartesian product, this gives rise to the equivalence
$\mathsf{swap}_{A_0,A_1} : A_0 \times A_1 \simeq A_1 \times A_0$, but this is not all.
The fact that the self-equivalence $\neg : \two \simeq \two$ is involutive
tells us that $\mathsf{swap}_{A_1,A_0} \circ  \mathsf{swap}_{A_0,A_1} = \mathsf{id}$.
This is only the start of an infinite tower of coherences, associated with the cell decomposition
of $\RPinf$. The upshot of the synthetic approach is that we do not need to think explicitly
about these coherences.

\subsection{Basic facts about $\RPinf$}

Let us now recall some elementary lemmas and constructions regarding $\RPinf$
and unordered pairs. The following lemma is a special case of a result due to
Kraus~\cite[Proposition 8.1.2.]{Kraus15} and the remaining ones can be found in
Buchholtz and Rijke~\cite{realproj}.
\begin{lemma}\label{lem:RPinf-elim}
  For $P : \RPinf \to \UU$, the type of functions $(X : \RPinf) \to P(X)$ is equivalent to
  \begin{enumerate}[label={{(\alph*)}}]
  \item \label{lem:RPinf-elim-prop} $P(\two)$ if $P$ is proposition-valued.
  \item \label{lem:RPinf-elim-set} $(t : P(\two)) \times (P(\neg)(t) = t)$ if $P$ is set-valued.
\end{enumerate}
\end{lemma}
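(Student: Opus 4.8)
The two parts have rather different flavours, so I would treat them separately, proving \ref{lem:RPinf-elim-prop} first and then using it inside the proof of \ref{lem:RPinf-elim-set}. Part \ref{lem:RPinf-elim-prop} is a soft consequence of $\RPinf$ being $0$-connected. Both $(X : \RPinf) \to P(X)$ and $P(\two)$ are propositions — the former because a dependent product of propositions is a proposition — so it is enough to produce maps in both directions. Forwards, evaluate at the basepoint $\two$. Backwards, given $p : P(\two)$ and $X : \RPinf$, the assignment $r \mapsto \mathsf{transport}^{P}(r^{-1}, p)$ is a map $(X = \two) \to P(X)$ into a proposition, hence factors through $\ptrunc{X = \two}$, which is inhabited because $\RPinf$ is $0$-connected; this yields the required element of $P(X)$.

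For part \ref{lem:RPinf-elim-set} I would first record some facts about $\RPinf$. Since $X \simeq \two$ is a set whenever $X$ is a $2$-element type, $\RPinf$ is a $1$-type, so $\Omega\RPinf$ is a set; by univalence $\Omega\RPinf \simeq (\two \simeq \two)$, the two-element set $\{\mathsf{id}, \neg\}$, and since $\neg \circ \neg = \mathsf{id}$ the loop $\ell : \two =_{\RPinf} \two$ corresponding to $\neg$ satisfies $\ell \cdot \ell = \refl$ (hence also $\ell^{-1} = \ell$). Every $e : X \simeq \two$ lifts to a path $\overline{e} : X =_{\RPinf} \two$ (the endpoints' second components being propositions), with $\overline{e'} = \overline{e}$ when $e' \circ e^{-1} = \mathsf{id}$ and $\overline{e'} = \overline{e} \cdot \ell$ when $e' \circ e^{-1} = \neg$. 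Now define $\mathsf{ev} : \big((X : \RPinf) \to P(X)\big) \to (t : P(\two)) \times (P(\neg)(t) = t)$ by $\mathsf{ev}(f) := (f(\two),\, \mathsf{apd}_f(\ell))$, reading $P(\neg)(t)$ as $\mathsf{transport}^P(\ell, t)$ and $\mathsf{apd}_f(\ell)$ as an equality $P(\neg)(f(\two)) = f(\two)$. The key observation is that, since $P(\two)$ is a set, the condition $P(\neg)(t) = t$ is a \emph{proposition}, so the codomain of $\mathsf{ev}$ is a subset of $P(\two)$; in particular an identification in the codomain is precisely an identification of first components, so no higher coherence data from $\mathsf{apd}_f$ ever needs to be tracked.

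To finish I would show that $\mathsf{ev}$ has contractible fibres. Fix $(t, q)$. By the previous remark, $\fib_{\mathsf{ev}}(t, q)$ is equivalent to $F_t := \big(f : (X : \RPinf) \to P(X)\big) \times (f(\two) = t)$, which does not depend on $q$. This type is a proposition: given two elements, their sections $f, f'$ satisfy $f(\two) = t = f'(\two)$, and applying part \ref{lem:RPinf-elim-prop} to the proposition-valued family $X \mapsto (f(X) = f'(X))$ reduces $(X : \RPinf) \to (f(X) = f'(X))$ to $f(\two) = f'(\two)$, which holds, so $f = f'$ (and the two proofs of $f(\two) = t$ agree, as $P(\two)$ is a set). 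And $F_t$ is inhabited whenever $P(\neg)(t) = t$: for each $X$ the map $(X \simeq \two) \to P(X)$, $e \mapsto \mathsf{transport}^{P}(\overline{e}^{\,-1}, t)$, is weakly constant — the only nontrivial case, $e' \circ e^{-1} = \neg$, being settled by $\overline{e'} = \overline{e} \cdot \ell$, $\ell^{-1} = \ell$ and $q$ — so, $P(X)$ being a set, Kraus's principle \cite{Kraus15} factors it through $\ptrunc{X \simeq \two}$; evaluating at $\snd(X)$ yields a section whose value at $\two$ is $t$. Thus $\fib_{\mathsf{ev}}(t, q)$ is an inhabited proposition, i.e.\ contractible, and $\mathsf{ev}$ is an equivalence.

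The whole statement is an instance of \cite[Proposition 8.1.2]{Kraus15}: sections of a set-valued family over the $\bZtwo$-delooping $\RPinf$ are $\bZtwo$-fixed points of the fibre. I expect the main obstacle to be purely bookkeeping — pinning down $\Omega\RPinf$ and $\ell \cdot \ell = \refl$, handling the lifts $\overline{e}$, and the weak-constancy check — rather than anything conceptual; the one mildly clever point is noticing that $P(\neg)(t) = t$ is a proposition, which splits ``$\mathsf{ev}$ is an equivalence'' into the two easy tasks of uniqueness and existence of a section through a given fixed point. A more structural alternative would observe that the total space $(X : \RPinf) \times X$ is contractible — a $2$-element type with a chosen element has a unique equivalence to $\two$ taking that element to $0$ — exhibiting $\RPinf$ as a quotient of a point by $\bZtwo$, after which descent along the surjection $(X : \RPinf) \times X \to \RPinf$ gives the result directly, trading the appeal to \cite{Kraus15} for a descent computation of similar length.
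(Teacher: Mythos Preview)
The paper does not actually give its own proof of this lemma; it simply records that it is a special case of \cite[Proposition~8.1.2]{Kraus15} and moves on. Your write-up is correct and self-contained: part~\ref{lem:RPinf-elim-prop} is the standard $0$-connectedness argument, and for part~\ref{lem:RPinf-elim-set} your reduction to contractibility of the fibre $F_t$, via the observation that $P(\neg)(t)=t$ is a proposition, together with the weak-constancy factorisation, goes through without issue. Since you also identify the statement as the cited instance of Kraus's result, you have recovered the paper's ``proof'' and additionally spelled out the underlying argument in detail.

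One small organisational remark: several of the facts you establish along the way (that $\RPinf$ is a $1$-type, the identification $\Omega\RPinf \simeq (\two \simeq \two)$, and the lifts $\overline{e}$) reappear in the paper as separate lemmas proved \emph{after} this one, sometimes \emph{using} this one (e.g.\ the involution on a general $X:\RPinf$). Your proof only needs the involution on $\two$ itself and the resulting loop $\ell$, so there is no circularity, but it is worth being explicit that you are not invoking the later general $\neg_X$ here.
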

\begin{lemma}\label{lem:RPinf-set}
  All types $X:\RPinf$ are sets.  
\end{lemma}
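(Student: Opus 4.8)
The plan is to exploit the standard fact that \emph{being a set is a proposition}. Concretely: for any type $A$, the assertion $\mathsf{isSet}(A)$ that $A$ is a $0$-type is itself a proposition (this is the usual statement that h-level predicates are propositions, proved by induction up the h-level hierarchy). Hence the predicate $P(X) \coloneqq \mathsf{isSet}(X)$ on $X : \RPinf$ — using the paper's conflation of $X$ with its underlying type — is proposition-valued, so by \cref{lem:RPinf-elim}\ref{lem:RPinf-elim-prop} the type $(X : \RPinf) \to \mathsf{isSet}(X)$ is equivalent to the single goal $\mathsf{isSet}(\two)$. Equivalently, and without invoking \cref{lem:RPinf-elim}: given $X : \RPinf$, since the goal $\mathsf{isSet}(X)$ is a proposition we may apply the recursion principle of propositional truncation to the witness $\ptrunc{X \simeq \two}$ and thereby assume an honest equivalence $e : X \simeq \two$.

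It then remains to establish $\mathsf{isSet}(\two)$ and to transfer it along $e$. For the first, $\two$ has decidable equality — inspect the four cases and in each either produce a path or a map into $\bot$ — so $\two$ is a set by Hedberg's theorem. For the transfer, being a set is invariant under equivalence: from $e : X \simeq \two$ and univalence we obtain a path $X = \two$ in $\UU$, and transporting the proof $\mathsf{isSet}(\two)$ backwards along it yields $\mathsf{isSet}(X)$; alternatively one argues directly that $\ap\,e$ gives an equivalence $(x = y) \simeq (e(x) = e(y))$ for all $x,y : X$, and a type whose identity types are all propositions is a set.

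There is no genuine obstacle here: the proof is a one-line reduction to $\mathsf{isSet}(\two)$, and the only ingredients needed are that h-level predicates are propositions, that $\two$ has decidable equality, and Hedberg's theorem — all part of the standard HoTT toolkit and already available from the cited developments. The only real choice is stylistic: whether to route the reduction through \cref{lem:RPinf-elim}\ref{lem:RPinf-elim-prop} or directly through the truncation's recursion principle into the proposition $\mathsf{isSet}(X)$.
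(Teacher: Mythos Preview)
Your argument is correct: since $\mathsf{isSet}$ is a proposition, one may eliminate the truncation in the definition of $\RPinf$ (either directly or via \cref{lem:RPinf-elim}\ref{lem:RPinf-elim-prop}) and reduce to the fact that $\two$ is a set, which follows from Hedberg's theorem. The paper itself does not give a proof of this lemma; it merely cites Buchholtz and Rijke~\cite{realproj} for it, so there is nothing further to compare.
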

\begin{lemma}\label{lem:PRinf-invol}
  For any $X:\RPinf$, there is an involution $\neg : X \simeq X$ agreeing with the usual involution of $\two$ whenever $X\coloneqq \two$. 
\end{lemma}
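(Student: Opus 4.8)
The plan is to construct the involution uniformly in $X$ via the elimination principle of \cref{lem:RPinf-elim}. Consider the family over $\RPinf$ given by $P(X) \coloneqq (e : X \simeq X) \times (e \ne \mathsf{id}_X)$, the type of non-identity self-equivalences of $X$ (conflating $X : \RPinf$ with its underlying type as usual). I would first check that $P$ is proposition-valued. Since ``being a proposition'' is itself a proposition, and $P(X)$ is invariant under equivalence of the underlying type, it suffices --- using that $X$ is by definition merely equivalent to $\two$ --- to show that $P(\two)$ is a proposition; in fact $P(\two)$ is contractible, since $\two \simeq \two$ is a two-element set (by \cref{lem:RPinf-set} together with the standard equivalence $(\two \simeq \two) \simeq \two$ obtained by evaluating at $0$) whose only non-identity element is the usual swap $\neg$.

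Granting that $P$ is proposition-valued, \cref{lem:RPinf-elim} (the proposition-valued case) identifies the type $(X : \RPinf) \to P(X)$ with $P(\two)$, which is inhabited by the pair $(\neg, h)$, where $h$ is the evident proof that $\neg \ne \mathsf{id}_\two$ (e.g.\ $\neg\,0 = 1 \ne 0$). Transporting back yields $\Phi : (X : \RPinf) \to P(X)$, and I set $\neg_X \coloneqq \mathsf{fst}(\Phi(X)) : X \simeq X$. Agreement at $\two$ is then immediate: $P(\two)$ is a proposition containing $(\neg,h)$, so $\Phi(\two) = (\neg,h)$ and hence $\neg_\two = \neg$. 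Finally, involutivity of $\neg_X$ is a proposition (an equality in the set $X \simeq X$), so I may assume an equivalence $X \simeq \two$; conjugating along it carries $\neg_X$ to a non-identity self-equivalence of $\two$, which must be $\neg$, and $\neg \circ \neg = \mathsf{id}$.

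I do not expect a genuine obstacle --- \cref{lem:RPinf-elim} does the real work --- but the fiddly points are exactly the two reductions to $\two$ (one must confirm that the relevant goals are genuinely propositions, and that the data is transported correctly along the equivalence of underlying types) together with the elementary fact that $\two \simeq \two$ consists of precisely $\mathsf{id}_\two$ and $\neg$. A variant that avoids singling out a proposition-valued family is to let $P(X)$ be the type of all involutions of $X$, which is set-valued by \cref{lem:RPinf-set}, and to apply \cref{lem:RPinf-elim} (the set-valued case); then in place of the involutivity check one must verify that the usual involution of $\two$ is fixed by the automorphism $\neg$, i.e.\ that conjugating $\neg$ by $\neg$ returns $\neg$.
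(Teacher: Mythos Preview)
Your proposal is correct and follows essentially the same strategy as the paper: define a family over $\RPinf$ of ``distinguished'' self-maps, reduce the relevant propositional claim to the case $X = \two$, and read off the involution there. The only notable packaging difference is that the paper takes $P(X) \coloneqq (e : X \to X) \times (e \circ e = \mathsf{id}) \times (e \ne \mathsf{id}_X)$ and shows this type is \emph{contractible} (again reducing to $\two$ via \cref{lem:RPinf-elim}\ref{lem:RPinf-elim-prop}), so that involutivity is built into $P$ and the desired map is simply the centre of contraction; you instead take $P(X)$ to be non-identity self-equivalences, obtain the section, and then verify $e \circ e = \mathsf{id}$ by a second reduction to $\two$. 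Both arguments are equally valid and rest on the same elementary fact that $\two$ has exactly one non-identity self-equivalence.
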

Although this lemma/definition is well known, its proof illustrates a useful technique for defining operations over $\RPinf$, so we choose to include it. 
\begin{proof}[Proof/construction of~\cref{lem:PRinf-invol}]
  For any $X:\RPinf$, let $P(X)\coloneqq (e : X \to X)\times((e \circ e = \mathsf{id}) \times e \neq \mathsf{id}_X)$. We claim that this type is contractible. Since this claim  is a proposition, it suffices, by \cref{lem:RPinf-elim}, to show that $P(\two)$ is contractible. This is trivial, as $P(\two)$ is the type of non-identity involutions on $\two$ -- a type uniquely pointed by $\two$-involution. So, $P(X)$ is contractible for any $X:\RPinf$ and we define $\neg$ to be the centre of contraction.
\end{proof}
Using \cref{lem:PRinf-invol}, we may construct, for any $X:\RPinf$ and $x:X$, an equivalence $e^x : \two \simeq X$ defined by setting
\[ e^x(0) \coloneqq  x \qquad e^x(1) := \neg x. \]
To prove that this indeed is an equivalence, we note that this statement is a proposition and thus, by \cref{lem:RPinf-elim}, it suffices to do so when $X$ is $\two$. By case-splitting on $x:\two$, we see that $e^x$ is the identity when $x = 0$ and involution when $x=1$. In particular, it is an equivalence. In fact, not only is $e^{x}$ always an equivalence -- the map  $e^{(-)}$ is one itself:
\begin{lemma}\label{lem:RPinf-charac}
  For any $X:\RPinf$, the map $e^{(-)} : X \to (\two \simeq X)$ is an equivalence.
\end{lemma}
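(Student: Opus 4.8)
The plan is to use that being an equivalence is a proposition. Applying \cref{lem:RPinf-elim}\ref{lem:RPinf-elim-prop} to the predicate $P(X)$ asserting that $e^{(-)} : X \to (\two \simeq X)$ is an equivalence, the lemma reduces to the single case $X \coloneqq \two$.

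In this case we can argue completely concretely, since we have already computed $e^0 = \mathsf{id}_\two$ and $e^1 = \neg$. I would write down the candidate inverse $\mathsf{ev}_0 : (\two \simeq \two) \to \two$ given by evaluation at $0$, $\phi \mapsto \phi(0)$. One composite, $\mathsf{ev}_0 \circ e^{(-)}$, is the identity on $\two$ because $e^x(0) \coloneqq x$ by definition (a one-line case split on $x : \two$, or nothing at all in cubical type theory). For the other composite, $e^{(-)} \circ \mathsf{ev}_0$, I would invoke the classical fact that a self-equivalence of $\two$ is pinned down by its value at $0$ --- equivalently, that $\two \simeq \two$ has exactly the two distinct elements $\mathsf{id}_\two$ and $\neg$ --- so that any $\phi : \two \simeq \two$ is one of these two, and in either case $e^{\phi(0)} = \phi$ by the two values above.

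A slight variant, avoiding the case split on $\phi$, is to note that $\mathsf{ev}_0 : (\two \simeq X) \to X$ makes sense for every $X : \RPinf$, that $\mathsf{ev}_0 \circ e^{(-)} = \mathsf{id}_X$ holds pointwise by definition, and that $\mathsf{ev}_0$ is itself an equivalence --- once more a proposition, hence reducible via \cref{lem:RPinf-elim} to $X \coloneqq \two$, where it is the standard identification of the self-equivalences of $\two$ with $\two$. The two-out-of-three property then gives that $e^{(-)}$ is an equivalence.

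Either way, the sole mathematical content is the classical computation of the automorphism type of $\two$; everything else is bookkeeping with \cref{lem:RPinf-elim} and function extensionality. I therefore expect the only (very mild) obstacle to be the formalisation-level fiddliness of that computation: one must assemble decidability of equality on $\two$, the fact that $0 \neq 1$, and the fact that being an equivalence is a proposition, so that equality of equivalences reduces to equality of underlying functions.
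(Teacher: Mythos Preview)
Your proposal is correct and follows essentially the same approach as the paper: reduce to $X = \two$ using that being an equivalence is a proposition together with \cref{lem:RPinf-elim}\ref{lem:RPinf-elim-prop}, then observe that $e^{(-)}$ sends $0$ to $\mathsf{id}_\two$ and $1$ to $\neg$, which are precisely the two self-equivalences of $\two$. The paper states this last step more tersely (``clearly invertible''), while you spell out an explicit inverse $\mathsf{ev}_0$ and check the round-trips, but the mathematical content is identical.
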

\begin{proof}
  The statement is a proposition, and thus it suffices to show it when $X=\two$. In this case, $e^{(-)}$ is the map sending $0$ to the identity on $\two$ and sending $1$ to the involution. As these are precisely the (two) $\two$-automorphisms, $e^{(-)}$ is clearly invertible and thus an equivalence. 
\end{proof}
By univalence, \cref{lem:RPinf-charac} gives a characterisation of the based path types on $\RPinf$: it tells us that any based path type $(\pt_{\RPinf} = X)$ is equivalent to the `point' $X$ itself. In particular, we get that unordered pairs $(A^X)$ really corresponds to fibrations over the based path types of $\RPinf$, i.e. $(\pt_{\RPinf} = X \to A)$. This gives us a new way of interpreting path induction for $\RPinf$:
\begin{lemma}
  \label{lem:RPinf-ind}
  Let $A : (X : \RPinf) \times X \to \UU$. The map
  \[\left({((X,x) : (\dots))} \to A(X,x)\right) \xrightarrow{f \mapsto f(\two,0)} A(\two,0)\]
  is an equivalence.
\end{lemma}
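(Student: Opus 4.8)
The plan is to show that the total space $T \coloneqq (X : \RPinf) \times X$ is contractible; the lemma then follows immediately. Indeed, $(\two, 0)$ is a point of $T$ (take $X \coloneqq \pt_{\RPinf}$, conflated with $\two$, and $x \coloneqq 0$), so if $T$ is contractible then it is contractible with centre $(\two, 0)$; and for any type family $A : T \to \UU$ over a contractible type the evaluation map $\bigl((t : T) \to A(t)\bigr) \to A(\two,0)$, $f \mapsto f(\two, 0)$, is an equivalence — this is standard and follows by reducing to the case $T = \bOne$. Note that, because contractibility of $T$ together with the evident point $(\two,0) : T$ already pins down the centre, we will never need to trace $(\two,0)$ through any chain of equivalences.

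To see that $T$ is contractible, I would build a fibrewise equivalence $X \simeq (\pt_{\RPinf} = X)$ for $X : \RPinf$ and sum over $X$. By \cref{lem:RPinf-charac} we have $e^{(-)} : X \simeq (\two \simeq X)$; composing with the univalence equivalence $(\two \simeq X) \simeq (\two = X)$ and with the equivalence $(\two = X) \simeq (\pt_{\RPinf} = X)$ — which holds because the fibres $\ptrunc{(-) \simeq \two}$ of the first projection $\RPinf \to \UU$ are propositions, so an identification in $\RPinf$ is the same thing as an identification of underlying types — yields the desired fibrewise equivalence. Since a fibrewise equivalence induces an equivalence of total spaces, this gives $T \simeq (X : \RPinf) \times (\pt_{\RPinf} = X)$, and the right-hand side is the based path space of $\RPinf$, which is contractible by path induction (singleton contraction).

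This argument is essentially a repackaging of the observation, made just before the statement, that $\RPinf$ supports a form of based path induction. The only delicate point — and the part where a fully formal proof spends its effort — is bookkeeping: one must keep careful track of the conflation of $X : \RPinf$ with its underlying type $\fst(X) : \UU$, and of the difference between identifications in $\RPinf$ and identifications in $\UU$, when manipulating the $\Sigma$-type equality $(\pt_{\RPinf} = X) \simeq (\two = \fst X)$. None of this is mathematically hard, so I expect the proof to be short once these conventions are fixed.
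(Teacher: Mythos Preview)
Your proposal is correct and matches the paper's approach exactly: the paper states this lemma as a reformulation of path induction, justified by the preceding sentence that \cref{lem:RPinf-charac} together with univalence identifies $X$ with the based path type $(\pt_{\RPinf} = X)$, which is precisely the fibrewise equivalence you construct. Your write-up simply makes explicit the contractibility-of-the-total-space argument that the paper leaves implicit.
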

Another way of understanding this is by the following induction rule for functions defined over $X:\RPinf$.
\begin{lemma}
  Let $X : \RPinf$, $B: X \to \UU$ and $x:X$. Any pair of points $b_0 : B(x)$ and $b_1 : B(\neg x)$ induces a function \[\pairfun {x \mapsto b_0}{\neg x \mapsto b_1} : (x : X) \to B(x)\] satisfying $\pairfun {x \mapsto b_0}{\neg x \mapsto b_1}(x) = b_0$ and $\pairfun {x \mapsto b_0}{\neg x \mapsto b_1}(\neg x) = b_1$. In fact, the map $B(x) \times B(\neg x) \xrightarrow{(b_0,b_1) \mapsto \pairfun {x \mapsto b_0}{\neg x \mapsto b_1}} \Pi_{x:X}B(x)$ is an equivalence.
\end{lemma}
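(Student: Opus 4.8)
The plan is to reduce everything to the already-established \cref{lem:RPinf-ind}, rather than redoing an $\RPinf$-induction from scratch. Observe that the data $(B, x, b_0, b_1)$ with $x : X$ and $b_0 : B(x)$, $b_1 : B(\neg x)$ is essentially the same as the data of a family over $(\pt_{\RPinf} = X) \simeq X$ together with two points. More concretely, I would first record the key special case: when $X \coloneqq \two$, $B : \two \to \UU$, and $x \coloneqq 0$, the claim is the familiar fact that $B(0) \times B(1) \to \Pi_{i:\two}B(i)$ (sending $(b_0,b_1)$ to the function defined by case analysis) is an equivalence, with the two computation rules holding definitionally. This is immediate by the induction principle for $\two$ (or just by $\eta$ for $\two$).

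Next I would upgrade this to arbitrary $X : \RPinf$ and arbitrary basepoint $x : X$. Using $e^x : \two \simeq X$ from the construction following \cref{lem:PRinf-invol}, precomposition along $e^x$ gives an equivalence $\Pi_{y:X}B(y) \simeq \Pi_{i:\two}B(e^x(i))$. Under this equivalence the pair $(b_0, b_1)$ — where now $b_0 : B(x) = B(e^x 0)$ and $b_1 : B(\neg x) = B(e^x 1)$ (using $e^x(1) = \neg x$, which holds definitionally) — is exactly the $\two$-indexed pair from the previous paragraph applied to the family $B \circ e^x$. So I would \emph{define} $\pairfun{x \mapsto b_0}{\neg x \mapsto b_1}(y) \coloneqq (\text{the }\two\text{-case of }B\circ e^x\text{ applied to }(b_0,b_1))(e^{x,-1}(y))$, or more cleanly, transport the $\two$-case equivalence along the equivalence $\Pi_{y:X}B(y) \simeq \Pi_{i:\two}B(e^x i)$. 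The map in the statement is then a composite of two equivalences — the $\two$-case equivalence and precomposition with $e^x$ — hence an equivalence. The computation rules $\pairfun{\dots}(x) = b_0$ and $\pairfun{\dots}(\neg x) = b_1$ follow by unfolding: evaluating at $x$ means evaluating the $\Pi_{i:\two}$-function at $e^{x,-1}(x)$; one needs $e^{x,-1}(x) = 0$ and $e^{x,-1}(\neg x) = 1$, which come from $e^x(0) = x$ and $e^x(1) = \neg x$ together with $e^x$ being an equivalence (and these can be made to hold up to a path, which suffices since we only claim propositional equalities here).

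The only mildly delicate point is the interaction of the two computation rules with the \emph{function underlying} $e^x$ rather than just its action on $0,1$: to get $\pairfun{\dots}(x) = b_0$ one wants the round-trip $e^{x,-1}(e^x(0))$ to reduce, and in a setting without definitional $\eta$ for equivalences one just inserts the coherence path of $e^x$. I expect the main obstacle to be purely bookkeeping: keeping straight which of the identifications $e^x(0) = x$, $e^x(1) = \neg x$ hold definitionally versus propositionally, and making sure the two computation rules are stated with the right transports. But there is no real mathematical content beyond \cref{lem:RPinf-ind} / \cref{lem:PRinf-invol} and the trivial $\two$-case; the whole lemma is, as the paper says, a repackaging of path induction for $\RPinf$ into a convenient "define a function on $X$ by giving its values at $x$ and $\neg x$" interface.
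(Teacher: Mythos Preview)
Your proposal is correct and matches the paper's intent: the paper does not give an explicit proof but presents this lemma as ``another way of understanding'' \cref{lem:RPinf-ind}, i.e.\ as a direct repackaging via the equivalence $e^x : \two \simeq X$ of \cref{lem:RPinf-charac}. Your construction---transport the trivial $\two$-case along $e^x$---is exactly this, and your remarks about the bookkeeping with the round-trip coherences of $e^x$ are accurate but, as you note, routine.
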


\subsection{Commutativity Structures}
As discussed, the significance of unordered pairs is that they allow us to capture the idea
of an operation being homotopy commutative in an `infinitely coherent' manner.
This is captured by the following definition.

\begin{definition}[Brunerie~\cite{Brunerie17}]
  A \emph{commutativity structure} for a binary operation $\diamond : A \times A \to B$ is a family of maps $\diamond_X : A^X\to B$ for each $X:\RPinf$ agreeing with $\diamond$ if $X=\two$.
\end{definition}
By letting $A$ and $B$ be Eilenberg-MacLane spaces in the above definition, a commutativity structure $\diamond_{(-)}$ will allow us to produce cohomology classes in $H^*(\RPinf)$. Brunerie's construction of the Steenrod squares boils down to showing that the cup product $\smile : K_n \times K_n \to K_{2n}$ has a commutativity structure. Before we get there, however, let us give the following example in order to illustrate the general idea of how commutativity structures can be constructed. In fact, the following construction will be useful in its own right.  

\begin{example}\label{ex:+str}
  For any commutative monoid $(M,+,0)$, addition $+ : M \times M \to M$ has a commutativity structure. Since $M$ is a monoid, it is a set and thus the type of maps $M^X \to M$ is a set for any $X$. We will define the commutativity structure, denoted by $\Sigma : M^X \to M$ for $X:\RPinf$, using \cref{lem:RPinf-elim}{\ref{lem:RPinf-elim-set}}. For $X \coloneqq  \two$, we define $\Sigma f := f(0) + f(1)$. We then need to check that this definition is invariant under $\two$-inversion. This corresponds to checking that $f(0) + f(1) = f(1) + f(0)$ which of course follows from commutativity of $M$.
\end{example}
The construction in \cref{ex:+str} crucially relied on $M$ being a set; when constructing commutativity structures in general, there are not that many other methods than this at hand. Fortunately, this argument can sometimes still be used in cases when the h-level of the type of commutativity structures is not zero:
\begin{example}\label{ex:EM+str}
  Addition on Eilenberg--MacLane spaces  $+ : K_n \times K_n \to K_n$ has a commutativity structure. To see why, we consider the family of dependent types $P_X:(K_n^X \to K_n) \to \UU$  defined by $P_X(f) \coloneqq  (f(\lambda x\,.0) = 0)$. We have 
\[(f : K_n \times K_n \to K_n) \times (P_{\two}(f)) = (K_n \times K_n \to_\pt K_n)\]
which is a set~\cite[Corollary 9]{BDR18}. This means that \cref{lem:RPinf-elim}{\ref{lem:RPinf-elim-set}} applies. It thus suffices to provide an element of $P_{\two}(+)$ and check that this choice is invariant w.r.t.\ involution of $\two$. This boils down to verifying the commutativity of $+$.
\end{example}
This idea of defining a predicate over the function type of interest which forces it to become a set is present also in Brunerie's original definition of a commutativity structure for the cup product. Although he does not state it exactly this way, Brunerie implicitly considers the following predicate. 
\begin{definition}\label{def:bihom-bad}
  Let $X:\RPinf$, $A : X \to \UU_\pt$ and $B:\UU_\pt$, and
  $f : (\prod_{x : X} A(X)) \to B$. We define $\mathsf{isBiHom}_X(f) : \UU$ to be the following type
  expressing that $f$ is `pointed in each argument':
  \begin{align*}
	&(f(\lambda\,x\,.\,\pt) = \pt) \times (\mathsf{pts} : B^X)
	\\
    &\!\!\times \! \left(\!\!\left(a : \prod_{x:X}A(x)\right)\!(x:X) \to (a(x) = \pt) \to f(a) = \mathsf{pts}(x)\!\right)
  \end{align*}
  Let $\mathsf{BiHom}_X(A , B) \!\coloneqq\! (f : \prod_{x:X} A(x) \to B)\times\mathsf{isBiHom}_X(f)$.
\end{definition}
A straightforward rewriting shows that, for any proof of $\mathsf{isBiHom}_X(f)$, its $\mathsf{pts}$ component
is constantly $\pt : B$, and we have
\[(f : A_0 \times A_1 \to B)\times \mathsf{isBiHom}_{\two}(f) \simeq (A_0 \wedge A_1 \to_\pt B).\]
By setting $A(x) = K_n$ and $B = K_{2n}$ in \cref{def:bihom-bad},  $\mathsf{isBiHom}_X$ is a predicate on the
function type $K^X_n \to K_{2n}$. Let us construct such a function.
The key observation is that the type of such functions is equivalent to $K_n
\wedge K_n \to_\pt K_{2n}$ whenever $X = \two$. This turns out to be a
set~\cite[Corollary 9]{BDR18}, and thus the type of such function is a set for
any $X:\RPinf$. Like in \cref{ex:+str,ex:EM+str}, it is enough to give the
construction when $X=\two$ and check that it is commutative. Since the cup
with coefficients mod 2 is commutative, it has a commutativity
structure.

This concludes (our take on) Brunerie's definition of the commutativity structure on the cup product. While it is certainly sufficient for constructing the Steenrod squares, it has turned out to be rather hard to reason about. One simple but crucial reason for this is that Brunerie's definition does not quite capture a key fact about the cup product, namely that it is \emph{graded}. The main issue we have is that our notion of a commutativity structure does not allow for dependent types. To remedy this, we propose the following definition.
\begin{definition}\label{def:graded-comm-str}
  Let $A : I \to \UU$ be a family of types where $I$ is a commutative monoid (e.g. $I=\bN$). A graded commutativity structure for a graded operation $\diamond : A_i \times A_j \to A_{i+j}$ is a family of maps $\diamond_{X,n} : (\Pi_{x:X}A_{n(x)}) \to A_{\Sigma n}$ for each $X:\RPinf$ and $n : X \to I$,
which reduces to $\diamond$ for $X \coloneqq \two$. We remind the reader of the definition of $\Sigma n$ from~\cref{ex:+str}.
\end{definition}

\begin{remark}
Since we work modulo 2 throughout, we have no reason to worry about the signs that normally show up when
discussing graded commutativity of e.g\ the cup product, but let us make a comment about how they
can be dealt with. For a group $G$ and a finite type $\mathbf{n}$ of $n$ elements, one can define a pointed
type $K(G, \mathbf{n})$, which is like $K(G, n)$ but with a `twist' relating
odd permutations of $\mathbf{n}$ with the involution of $K(G,n)$ given by negation.
If $G$ then is a commutative ring, the corresponding graded commutativity structure
on $K(G, -)$ would be given by maps
$\Pi_{x : X} K(G, \mathbf{n}(x)) \to K(G, \sum_{x : X} \mathbf{n}(x))$ for $X : \RPinf$
and $\mathbf n : X \to \UU$ a family of finite types.
The key here is to index not by $\bN$ but by the type of finite sets, or some other higher type
which records information about twists.
\end{remark}

Our construction of the commutativity structure on the cup product can be restated, word by word, to equip it with a graded commutativity structure. This slight generalisation of Brunerie's definition will be the one used in this paper. For this reason, let us finish this section by giving it a name.
\begin{definition}
  The cup product has a graded commutativity structure which we will refer to as the \textbf{unordered cup product}. For $X:\RPinf$, $n:X\to \bN$ and $f : (x:X) \to K_{n(x)}$, we denote it by $\bigsmile_{x:X}f(x) : K_{\Sigma n}$. 
\end{definition}



\section{The Steenrod squares}
\label{sec:maindef}
\normalsize

We are now well-prepared to define the Steenrod squares. We follow Brunerie's
approach, as laid out in \eqref{eq:sqdef}. That is, we need to define two maps:
one of type $K_m \to (\RPinf \to K_{2m})$ and one (equivalence) of type
$(\RPinf \to K_{2m}) \xrightarrow{\sim} \prod_{i \leq 2m}K_{i}$. Let us start with the first
map. 
Suppose we are given $a : K_m$ and $X:\RPinf$. We let $n: X \to \bN$ and
$\widehat{a} : (x : X) \to K_{n(x)}$ be the constant functions $n(x) \coloneq
m$ and $\widehat{a}(x) := a$. We may now define $a^X : K_{2m}$ by
\begin{equation}
\label{eq:S}
  a^X \coloneqq \bigsmile_{x:X}\widehat{a}(x).
\end{equation}
The notation is meant to suggest that we think of $a^X$ as the cup product of $X$-many copies of $a$;
traditionally, this may also be written as $S(a,X)$.

The second map is given by the inverse equivalence in the following lemma.
\begin{lemma}\label{lem:polynomial-representation}
For $n : \bN$, we have an equivalence
\begin{align*}
\gysin_n: \prod_{i \leq n}K_i
&\simeq
(\RPinf \to K_n)
	\\
		\gysin_n(b_0, \ldots, b_n) &\coloneqq  
X \mapsto
\sum_{i = 0}^n b_i \smile t(X)^{n-i}
.
\end{align*}
\end{lemma}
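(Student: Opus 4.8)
The plan is to build the equivalence $\gysin_n$ by induction on $n$, exploiting the fact that $\RPinf$ is, by \cref{lem:RPinf-charac} and univalence, the ``universal'' space carrying a canonical degree-one class $t : (\RPinf \to K_1)$ — this is just the cohomology-theoretic incarnation of the Gysin sequence / Thom isomorphism for the tautological bundle over $\RPinf$, which is where the notation $\gysin$ comes from. Concretely, $t(X) : K_1$ is the class corresponding under $K(\bZtwo,1) \simeq \Omega K(\bZtwo,2)$-type reasoning to the canonical element of $\truncT{0}{\RPinf \to K_1} \cong H^1(\RPinf;\bZtwo)$, i.e.\ the class classified by the identity self-equivalence $\neg$; equivalently $t$ is obtained from \cref{lem:RPinf-charac} by transporting the involution along the equivalence $(\pt_{\RPinf} = X) \simeq X$. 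The formula $\gysin_n(b_0,\dots,b_n) = (X \mapsto \sum_i b_i \smile t(X)^{n-i})$ is then a well-typed map $\prod_{i\le n} K_i \to (\RPinf \to K_n)$ since $b_i : K_i$ and $t(X)^{n-i} : K_{n-i}$, so $b_i \smile t(X)^{n-i} : K_n$, and we sum in the H-space structure on $K_n$.

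First I would set up the base case $n = 0$: here $\prod_{i\le 0} K_i = K_0 = \bZtwo$ and $\gysin_0(b_0) = (X \mapsto b_0)$, the constant map; since $\RPinf$ is connected (it is $0$-connected, being the colimit of the $\RP^n$), $(\RPinf \to K_0)$ is just $K_0$ and this is visibly an equivalence. For the inductive step, the key structural observation is that evaluating a map $g : \RPinf \to K_n$ at the basepoint gives $g(\pt) : K_n$, and ``subtracting'' (using the H-space/group structure on $K_n$) the constant map at $g(\pt)$ yields a map that vanishes at $\pt$, i.e.\ an element of $(\RPinf \to_\pt K_n)$. The main lemma I would prove is a splitting
\[
  (\RPinf \to K_n) \;\simeq\; K_n \times (\RPinf \to_\pt K_n),
\]
and then a ``division by $t$'' equivalence $(\RPinf \to_\pt K_n) \simeq (\RPinf \to K_{n-1})$ sending $h$ to the unique $h'$ with $h = h' \smile t$. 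Composing with the inductive hypothesis $\gysin_{n-1} : \prod_{i\le n-1}K_i \simeq (\RPinf\to K_{n-1})$ and reassociating the product $\prod_{i\le n}K_i \simeq K_n \times \prod_{i\le n-1}K_i$ (splitting off the top coefficient $b_n$, which becomes the basepoint value) gives the result; one checks the composite agrees with the stated polynomial formula by unwinding, using that $t(\pt) = 0$ so the constant term $b_n \smile t(X)^0 = b_n$ is exactly the basepoint-value part and the remaining terms $\sum_{i<n} b_i \smile t(X)^{n-i} = \big(\sum_{i<n} b_i \smile t(X)^{n-1-i}\big)\smile t(X)$ factor through $t$.

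The hard part will be establishing the ``division by $t$'' equivalence $(\RPinf \to_\pt K_n) \simeq (\RPinf \to K_{n-1})$, i.e.\ that cupping with $t$ is an equivalence onto the pointed maps; this is precisely the Thom isomorphism for the tautological line bundle and is not formal. I would derive it from the Gysin sequence for the sphere bundle $\sphere{0} \to \sphere{\infty} \to \RPinf$ (equivalently, from the cofibre sequence presenting $\RP^{n}$ from $\RP^{n-1}$, taking a colimit), or alternatively from the known computation $H^\bullet(\RPinf;\bZtwo) \cong \bZtwo[t]$ together with a connectivity/truncation argument to lift the statement from $0$-truncated cohomology groups to the full function types $\RPinf \to K_n$; here \cite[Section 6.1]{Brunerie16} (cited right after the display \eqref{eq:sqdef}) does the relevant work. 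Everything else — the splitting off of $g(\pt)$, the reassociation of the finite product, and the verification that the assembled map matches the displayed polynomial — is routine manipulation with H-space structure and the fact that $t$ is pointed.
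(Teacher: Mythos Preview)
Your proposal is correct and follows essentially the same approach as the paper: induction on $n$, with the inductive step decomposed into the splitting $(\RPinf \to K_n) \simeq K_n \times (\RPinf \to_\pt K_n)$ via the H-space structure (the paper's \cref{lem:h-space-pt-map}) and the Thom/Gysin equivalence $(\RPinf \to_\pt K_n) \simeq (\RPinf \to K_{n-1})$ given by cupping with $t$ (the paper's \cref{lem:gysin}), followed by the inductive hypothesis and a check that the composite matches the polynomial formula. You correctly isolate the Thom isomorphism as the one non-formal ingredient and point to the right reference.
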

Here, $t$ denotes the unique pointed equivalence $\RPinf \xrightarrow{\sim}_\pt K_1$, and $t(X)^{n-i}$ denotes the
iterated cup product of $t(X)$ with itself.
One can think of~\cref{lem:polynomial-representation} as describing the mod 2 cohomology of $\RPinf$, but
more directly it says that every map $\RPinf \to K_n$ has a unique `polynomial' representation.
Before proving~\cref{lem:polynomial-representation}, we first have to state two simpler lemmas.
\begin{lemma}\label{lem:gysin}
For $n : \bN$, we have an equivalence
\begin{align*}
	(\RPinf \to K_n) &\simeq (\RPinf \to_\pt K_{n+1}) \\
		f &\mapsto X \mapsto t(X) \smile f(X)
\end{align*}
\end{lemma}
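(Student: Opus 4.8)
The plan is to prove that $\Phi : (\RPinf \to K_n) \to (\RPinf \to_\pt K_{n+1})$ given by $\Phi(f)(X) \coloneqq t(X) \smile f(X)$ is an equivalence by computing its fibres; this is in essence the Gysin/Thom isomorphism for the tautological double cover of $\RPinf$, which happens to have contractible total space. Concretely, write $E \coloneqq \sum_{X:\RPinf} X$ for the total space of the tautological $\bZtwo$-torsor $\pi \coloneqq \mathrm{pr}_1 : E \to \RPinf$. By \cref{lem:RPinf-ind} (equivalently: since $t$ is an equivalence, $E$ is the fibre of $t$), $E$ is contractible with centre $(\two,0)$.

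First I would fix a pointed map $g : \RPinf \to_\pt K_{n+1}$ and rewrite the fibre. By function extensionality and the type-theoretic choice principle $\prod\sum \simeq \sum\prod$,
\[
\fib_\Phi(g) \;\simeq\; \prod_{X:\RPinf} \fib_{(t(X)\,\smile\,-)}\bigl(g(X)\bigr) \eqqcolon \prod_{X:\RPinf} P(X).
\]
Pulling the family $P : \RPinf \to \UU$ back along the torsor $\pi$ and using \cref{lem:RPinf-ind} to evaluate at the centre $(\two,0)$ — where $t(\two) = \pt_{K_1}$, so $t(\two)\smile-$ is constant, and $g(\two) = \pt$ — yields $\prod_{e:E} P(\pi\,e) \simeq \fib_{(\pt\,\smile\,-)}(\pt) = K_n \times \Omega K_{n+1}$. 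Descent along the torsor then identifies $\fib_\Phi(g)$ with the homotopy fixed points $(K_n \times \Omega K_{n+1})^{h\bZtwo}$ of the residual $\bZtwo$-action, i.e.\ with $\prod_{X:\RPinf}P(X)$ read off from the monodromy of $\RPinf = K(\bZtwo,1)$.

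The crucial step is to identify this action and show its homotopy fixed points are contractible. Since the domain $K_n$ of $t(X)\smile-$ does not vary with $X$, the monodromy around the generating loop $\ell : \pt = \pt$ of $\RPinf$ fixes the $K_n$-coordinate and shears the loop coordinate: $(a,p) \mapsto (a,\; p + (t(\ell)\smile a) + \ap_g(\ell))$, working additively since all signs vanish mod $2$. The key input is that $a \mapsto t(\ell)\smile a : K_n \to \Omega K_{n+1}$ is precisely the suspension equivalence $\sigma_n$: indeed $t$ sends $\ell$ to $\sigma_0(1)$, and the statement $\ap_{(-\,\smile\,a)}(\sigma_0(1)) = \sigma_n(a)$ is exactly the fact that $\smile$ respects looping~\cite[Lemma 29]{LM24}, instantiated in degree $(0,n)$ and evaluated at $1 : \bZtwo$. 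Transporting along $\mathrm{id}\times\sigma_n^{-1}$ turns the action into the shear $(a,b) \mapsto (a,\, a + b + c)$ on $K_n \times K_n$ with $c$ a correction term determined by $g$, and one then checks that its homotopy fixed points are contractible (the unique fixed point has $a = c$, with $b$ then forced, and the coherence witnessing that the action squares to the identity is supplied by $2$-torsion of $K_n$).

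The hard part is this final computation: pinning the $\bZtwo$-action down together with its higher coherences (which descend from $\neg\circ\neg = \mathrm{id}$ on each $X:\RPinf$, cf.\ \cref{lem:PRinf-invol}), and verifying contractibility of the homotopy fixed points of the shear — the mod $2$ setting removes sign bookkeeping but the descent/fixed-point analysis still needs care. A more self-contained but longer alternative sidesteps homotopy fixed points: both $(\RPinf \to K_n)$ and $(\RPinf \to_\pt K_{n+1})$ are $n$-types (as $\RPinf$ is a $1$-type, and $\Omega^{n+1}$ of the pointed mapping space is contractible), and $\Phi$ is a homomorphism of grouplike $H$-spaces, so by a Whitehead argument for truncated types it suffices to check that $\Phi$ is a bijection on $\pi_0$ and on each $\pi_k$ of the identity component; after looping (again using that $\smile$ respects looping) every such map reduces to ``cupping with $t$ is a bijection $H^j(\RPinf;\bZtwo) \to H^{j+1}(\RPinf;\bZtwo)$'', i.e.\ to the mod $2$ cohomology ring of $\RPinf$~\cite{realproj} or to the Gysin long exact sequence of the contractible double cover $E \to \RPinf$.
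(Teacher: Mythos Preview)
Your primary approach has a genuine gap in the very first step. The identification
\[
\fib_\Phi(g) \;\simeq\; \prod_{X:\RPinf}\fib_{(t(X)\smile-)}(g(X))
\]
would be correct if the codomain of $\Phi$ were the \emph{unpointed} function type $\RPinf \to K_{n+1}$, but $\Phi$ lands in $\RPinf \to_\pt K_{n+1}$. An identification $\Phi(f) = g$ of pointed maps carries an extra datum: the coherence between the canonical pointing of $X \mapsto t(X)\smile f(X)$ (coming from $t(\two)=0$) and the given pointing $p_g$ of $g$. Your product $\prod_X P(X)$ is instead the fibre of the \emph{unpointed} map $\Psi(f)(X) \coloneqq t(X)\smile f(X)$ over the underlying map of $g$. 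These differ precisely by that pointing coherence.

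This is not harmless: in fact $\prod_X P(X)$ is \emph{not} contractible. Your own action computation shows why. With the shear $(a,b)\mapsto(a,a+b+c)$ on $K_n\times K_n$, the fixed-point condition $b = a+b+c$ forces $a=c$ but leaves $b$ completely free; so already at the level of strict fixed points you get a copy of $K_n$, not a point. (Equivalently: if $\prod_X P(X)$ were contractible for every $g_0$, then $\Psi$ would be an equivalence, contradicting the fact that its codomain has an extra $K_{n+1}$-factor compared to its domain.) The claim ``with $b$ then forced'' is exactly where the argument breaks. The missing pointing coherence is what would pin down $b$; once you reinstate it the fibre does become contractible, but carrying that coherence through the descent/fixed-point analysis is real additional work.

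Your alternative route --- observe that both sides are $n$-types, that $\Phi$ is a map of grouplike H-spaces, and apply a Whitehead-style argument reducing to ``$t\smile -$ is an isomorphism $H^j(\RPinf;\bZtwo)\to \tilde{H}^{j+1}(\RPinf;\bZtwo)$'' --- is sound and is essentially what the paper does: it cites the Thom isomorphism from~\cite[Section 6.1]{Brunerie16} with details in~\cite[Section 5.5]{LM24}. That is the argument to keep.
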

\cref{lem:gysin} is proved using the Thom isomorphism~\cite[Section 6.1]{Brunerie16}. For details, see~\cite[Section 5.5]{LM24}.
\begin{lemma}\label{lem:h-space-pt-map}
For any invertible H-space $B$ and pointed type $A$, we have an equivalence
\begin{align*}
B \times (A \to_\pt B) &\simeq A \to B
\\
(b , f) &\mapsto a \mapsto b + f(a)
\end{align*}
\end{lemma}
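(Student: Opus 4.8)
The plan is to realise the stated map as a composite of equivalences organised around the ``evaluate at the basepoint'' map $\mathsf{ev} : (A \to B) \to B$, $g \mapsto g(\pt_A)$. By the very definition of a pointed map, the fibre of $\mathsf{ev}$ over $\pt_B$ is exactly $A \to_\pt B$; and by the standard equivalence $E \simeq \sum_{b:B}\fib_f(b)$ for any $f : E \to B$, we have $(A \to B) \simeq \sum_{b:B}\fib_{\mathsf{ev}}(b)$. So it suffices to trivialise the family $b \mapsto \fib_{\mathsf{ev}}(b)$, i.e.\ to produce an equivalence $\fib_{\mathsf{ev}}(\pt_B) \simeq \fib_{\mathsf{ev}}(b)$ depending appropriately on $b$.

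For this I would use left translation. Since $B$ is an invertible H-space, for each $b : B$ the map $\ell_b \coloneqq (b+{-}) : B \to B$ is an equivalence (this is one of the equivalent formulations of invertibility), and the unit law gives $\ell_b(\pt_B) = b$. Postcomposition $(\ell_b \circ {-}) : (A \to B) \to (A \to B)$ is therefore an equivalence, and it lies over $\ell_b$ on the base, since $\mathsf{ev}(\ell_b \circ g) = b + g(\pt_A) = \ell_b(\mathsf{ev}(g))$. A map of total spaces lying over a base equivalence and itself an equivalence induces an equivalence on each fibre, by the 2-out-of-3 property; applying this over $\pt_B$ yields an equivalence $(A \to_\pt B) = \fib_{\mathsf{ev}}(\pt_B) \xrightarrow{\sim} \fib_{\mathsf{ev}}(b)$ sending a pointed map $f$ to $\ell_b \circ f = (a \mapsto b + f(a))$ together with the evident basepoint witness.

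Assembling the pieces gives equivalences $B \times (A \to_\pt B) = \sum_{b:B}\fib_{\mathsf{ev}}(\pt_B) \xrightarrow{\sim} \sum_{b:B}\fib_{\mathsf{ev}}(b) \xrightarrow{\sim} (A \to B)$, where the last map is the projection $(b,(g,q)) \mapsto g$ inverse to $g \mapsto (g(\pt_A),(g,\refl))$. Chasing a pair $(b,f)$ through this composite yields $\ell_b \circ f$, i.e.\ the function $a \mapsto b + f(a)$, so the stated map coincides with this composite and is hence an equivalence. There is no real obstacle here — the lemma amounts to the observation that $\mathsf{ev}$ becomes a trivial fibration via left translation — and the only things to be careful about are the bookkeeping identifying $A \to_\pt B$ with $\fib_{\mathsf{ev}}(\pt_B)$ and verifying that the composite reproduces $a \mapsto b + f(a)$; in particular neither associativity nor genuine inverses are needed, only that translations are equivalences.
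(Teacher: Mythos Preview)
Your proof is correct and essentially the same as the paper's. The paper writes the chain $B \times (A \to_\pt B) \simeq (b : B) \times (A \to_\pt (B,b)) \simeq (f : A \to B) \times (b : B) \times (f(\pt_A) = b) \simeq (A \to B)$, invoking homogeneity of an invertible H-space for the first step and contractibility of singletons for the last; your fibrewise trivialisation of $\mathsf{ev}$ via left translation is exactly the homogeneity step, and your identification $(A \to B) \simeq \sum_{b}\fib_{\mathsf{ev}}(b)$ is the singleton-contraction step read backwards.
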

\begin{proof}
  We have \begin{align*}
    B \times (A \to_\pt B) &\simeq (b : B) \times (A \to_\pt (B,b)) \\
    &\simeq (f : A \to B) \times (b : B) \times (f(a) = b)\\
    &\simeq (A \to B)
    \end{align*}
where the first step comes from the fact that $B$ is an invertible H-space (and hence homogeneous) and the second from the contractibility of singletons. This equivalence agrees with the proposed one by construction.  
\end{proof}
We are now ready to prove~\cref{lem:polynomial-representation}.
\begin{proof}[Proof of~\cref{lem:polynomial-representation}]
By induction.
For $n = 0$, this is simply the statement that any map $\RPinf \to K_0$ is constant, which
follows from connectedness of $\RPinf$.
Now, suppose the lemma holds for some $n \ge 0$.
Then
\begin{align*}
\Pi_{i \le {n+1}} K_i & \simeq
K_{n+1} \times \Pi_{i \le n} K_i
\\
&\simeq K_{n+1} \times (\RPinf \to K_n)
\\
&\simeq K_{n+1} \times (\RPinf \to_\pt K_{n+1})
\\
&\simeq \RPinf \to K_{n+1}.
\end{align*}
Here, the second line is by the inductive hypothesis, the third by \cref{lem:gysin}, and the final
line is by \cref{lem:h-space-pt-map}. It is direct to see that the forward composite is
the desired one.
\end{proof}
Finally, we are ready to define the Steenrod squares.
\begin{definition}[Steenrod squares]\label{def:squares}
  We define the total square $\totSq : K_m \to \prod_{i\leq 2m}K_{i}$ by $\totSq(a) \coloneqq  \gysin^{-1}_{2m}(a^{(-)})$.We define the $n$th Steenrod square $\sq{n} : K_m \to K_{m+n}$ by
  \begin{align*}
    \sq{n}(a) \coloneqq  \begin{cases} \mathsf{proj}_{m+n}(\totSq(a)) &\text{ if $n \leq m$} \\ 0 &\text{ otherwise} \end{cases}
  \end{align*}
\end{definition}
Unpacking the definition, we get the following characterisation of $\sq{n}(a)$ for a given $a : K_m$: they are the unique collection of terms such that for every $X :\RPinf$ we have%
\footnote{Formally, we should include terms $i$ from $0$ to $2m$ in the equation.
	But the corresponding maps $\sq{n} : K_m \to K_{m+n}$ with $n < 0$ are zero for connectedness reasons;
		they are pointed by \cref{lem:ptd}.}
\begin{equation}\label{eq:sq-poly}
	a^X = \sum_{i=0}^m \sq{m}(a) \smile t(X)^{m-i}.
\end{equation}
Note that \ref{ax2} holds by construction with this definition of $\sq{n}$.

\subsection{Proving the main theorem}
Now that we have a definition of the Steenrod squares (following Brunerie), let us, in this section, work our way towards a proof of \cref{thm:main}. 
The idea is to use \cref{eq:sq-poly} to reduce properties of $\sq{n}$ to properties of $(-)^X$, and hence
of the unordered cup product. 
The following is a simple example.
\begin{lemma}\label{lem:ptd}
The Steenrod squares are pointed, i.e.\ $\sq{n}(0) = 0$.
\end{lemma}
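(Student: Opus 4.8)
The plan is to unfold \cref{def:squares} and reduce the claim to pointedness of the total square, which in turn reduces to pointedness of the unordered cup product $a^{(-)}$ at $a = 0$. Concretely, by definition $\sq{n}(0) = \mathsf{proj}_{m+n}(\totSq(0))$ when $n \le m$ (and it is literally $0$ otherwise, so that case is trivial), and $\totSq(0) = \gysin^{-1}_{2m}(0^{(-)})$. Since $\mathsf{proj}_{m+n}$ and $\gysin^{-1}_{2m}$ are maps between pointed types, it suffices to know that each is pointed, i.e.\ sends the basepoint to the basepoint, together with the key fact that $0^{(-)} : \RPinf \to K_{2m}$ is the constant-zero map (the basepoint of $\RPinf \to K_{2m}$).

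First I would observe that $\gysin_n$, being built (in the proof of \cref{lem:polynomial-representation}) as a composite of equivalences each of which is manifestly pointed — the reshuffling $\Pi_{i\le n+1}K_i \simeq K_{n+1}\times\Pi_{i\le n}K_i$, the inductive hypothesis, \cref{lem:gysin}, and \cref{lem:h-space-pt-map} — is itself a pointed equivalence sending the zero tuple $(0,\dots,0)$ to the constant-zero function; hence its inverse $\gysin^{-1}_{2m}$ sends the constant-zero function to $(0,\dots,0)$. Likewise $\mathsf{proj}_{m+n}$ sends $(0,\dots,0)$ to $0$. So the only remaining content is: $0^X = 0$ for every $X:\RPinf$, i.e.\ $\bigsmile_{x:X}\widehat{0}(x) = 0$ where $\widehat 0$ is constantly $0:K_m$. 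This follows because the unordered cup product, being a graded commutativity structure, restricts to the ordinary cup product when $X = \two$, where $0 \smile 0 = 0$ (the cup product is pointed in each argument); and more generally one extends this over all of $\RPinf$ using the induction principle \cref{lem:RPinf-elim}\ref{lem:RPinf-elim-set}, noting that for fixed arguments the relevant identity type in $K_{2m}$ is a set. Alternatively, and perhaps more cleanly, one can invoke the $\mathsf{isBiHom}$ structure underlying the unordered cup product: it records precisely that the cup-product map is pointed in each argument over every $X$, which immediately gives $\bigsmile_{x:X}\widehat{0}(x) = \mathsf{pts}(x) = \pt$ for any $x : X$ (here using that the $\mathsf{pts}$ component is constantly $\pt$, as noted after \cref{def:bihom-bad}).

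I expect the main subtlety — it is hardly an obstacle — to be bookkeeping: making precise that the equivalence $\gysin_{2m}$ constructed by induction really is pointed, which amounts to checking that each step in the chain of equivalences in the proof of \cref{lem:polynomial-representation} respects basepoints, and that \eqref{eq:S} specialised to $a = 0$ yields the constant-zero map over all of $\RPinf$ rather than merely over $\two$. Both are routine given the machinery already set up: the former because every constituent equivalence is visibly a map of pointed types, and the latter by the remark that $\mathsf{isBiHom}_X(f)$ forces its $\mathsf{pts}$ component to be constantly $\pt$. A short proof in the paper would likely just say: $\sq{n}(0) = \mathsf{proj}_{m+n}(\gysin^{-1}_{2m}(0^{(-)})) = \mathsf{proj}_{m+n}(\gysin^{-1}_{2m}(\lambda X.\,0)) = \mathsf{proj}_{m+n}(0,\dots,0) = 0$.
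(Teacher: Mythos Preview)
Your proposal is correct and follows essentially the same route as the paper: both reduce to showing $0^X = 0$ for all $X : \RPinf$ via the bihom structure, and then use that $\gysin$ is an equivalence to conclude. One small caveat: your first suggestion for establishing $0^X = 0$, via \cref{lem:RPinf-elim}\ref{lem:RPinf-elim-set}, does not work as stated, since the identity type $0^X = 0$ lives in $K_{2m}$, a $2m$-type, and so is a $(2m-1)$-type rather than a set for $m \geq 1$; your alternative bihom argument is the right one and is exactly what the paper invokes. The paper also shortcuts the pointedness of $\gysin$ by reading it off directly from the explicit formula $\gysin_n(b_0,\dots,b_n)(X) = \sum_i b_i \smile t(X)^{n-i}$ (plugging in zeros gives zero), rather than tracing through the inductive construction as you propose; both are fine, but the former is less bookkeeping.
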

\begin{proof}
We have $0^X = 0$ for any $X : \RPinf$ since the unordered cup product is a bihom by construction.
Thus \cref{eq:sq-poly} gives $0 = \sum_{i = 0}^n \sq{n}(a) \smile t(X)^{n-i}$ for all $X : \RPinf$.
By \cref{lem:polynomial-representation}, we must have $\sq{n}(a) = 0$ for all $n$.
\end{proof}
Perhaps more interestingly, the Cartan formula is equivalent to the following
innocuous equation:
\begin{equation}\label{eq:cartan-poly}
  (a \smile b)^X = a^X \smile b^X
  .
\end{equation}
We will prove the above equation via the following generalisation, which can be thought
of as a type of Fubini interchange law and also will give rise to the Adem relations.
\begin{theorem}\label{thm:cup-fubini}
  For any $X,Y:\RPinf$, $n: X\times Y \to \bN$ and $f : \prod_{x:X}\prod_{y:Y}K_{n(x,y)}$, we have
  \[
  \bigsmile_{x:X}\bigsmile_{y:Y} f(x,y) = \bigsmile_{y:Y}\bigsmile_{x:X} f(x,y)
  .
  \]
\end{theorem}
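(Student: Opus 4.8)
The plan is to prove the interchange law by $\RPinf$-induction, reducing to the case $X=Y=\two$. The subtlety is that for fixed data the equation is an identity in an Eilenberg--MacLane space, hence \emph{not} proposition-valued, so the reduction has to be set up carefully. Fix $X,Y:\RPinf$, $n:X\times Y\to\bN$, and set $N \coloneqq \sum_{x:X}\sum_{y:Y}n(x,y)$; note $N$ also equals $\sum_{y:Y}\sum_{x:X}n(x,y)$ by the harmless set-level Fubini law for $\Sigma$ from \cref{ex:+str}, so the two sides of the theorem do live in the same type $K_N$. Consider the two maps
\[
\Phi,\Psi : \Bigl(\textstyle\prod_{x:X}\prod_{y:Y}K_{n(x,y)}\Bigr)\to K_N,
\qquad
\Phi(f)\coloneqq\bigsmile_{x:X}\bigsmile_{y:Y}f(x,y),
\qquad
\Psi(f)\coloneqq\bigsmile_{y:Y}\bigsmile_{x:X}f(x,y);
\]
the theorem asserts $\Phi = \Psi$.

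Since the unordered cup product is a bihom in the sense of \cref{def:bihom-bad}, both $\Phi$ and $\Psi$ send $f$ to $\pt$ as soon as some coordinate $f(x_0,y_0)$ is the basepoint; they are thus ``pointed in each slot'', and so factor through the iterated unordered smash product $\bigwedge_{x:X}\bigwedge_{y:Y}K_{n(x,y)}$. The key technical input — the analogue of \cref{ex:EM+str} and of the discussion after \cref{def:bihom-bad} — is that the type of such pointed-in-each-slot maps into $K_N$ is a \emph{set}; as always, this is checked by reducing to $X=Y=\two$, where the relevant mapping space is $K_{n_{00}}\wedge K_{n_{01}}\wedge K_{n_{10}}\wedge K_{n_{11}}\to_\pt K_N$, which is a set by \cite[Corollary 9]{BDR18}. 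Granting this, the identity $\Phi=\Psi$, read inside this set, is a proposition; generalising over $Y$ and $n$ keeps it a proposition, so \cref{lem:RPinf-elim}{\ref{lem:RPinf-elim-prop}} applies twice and we may assume $X=Y=\two$.

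For $X=Y=\two$ the unordered cup product reduces to the ordinary binary cup product, and the claim unfolds to the identity
\[
\bigl(f(0,0)\smile f(0,1)\bigr)\smile\bigl(f(1,0)\smile f(1,1)\bigr)
=\bigl(f(0,0)\smile f(1,0)\bigr)\smile\bigl(f(0,1)\smile f(1,1)\bigr),
\]
which is immediate from associativity and (mod $2$) commutativity of $\smile$; at the level of $\pi_N$ it is just commutativity of $\bZ/2$.

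The real work is hidden in the ``key technical input'' above: one must set up the iterated unordered smash product $\bigwedge_{x:X}\bigwedge_{y:Y}(-)$ coherently in both $X$ and $Y$ and identify it as the classifying object for the double unordered cup product, so that the set-ness claim — and with it the whole reduction to $\two\times\two$ — goes through. This is precisely what \cref{sec:fubini} is for, and the cleanest route is via \emph{unordered joins}: the join admits a Fubini/interchange law for families indexed by \emph{arbitrary} types, not just $\RPinf$, and the unordered-smash and cup-product versions can be extracted from it using $\Sigma(A\wedge B)\simeq A\ast B$ together with the description of $K_n$ as a truncated iterated join. I expect this step — and the bookkeeping matching the two computations of $N$ — to be where essentially all the difficulty lies.
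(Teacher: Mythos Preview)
Your high-level strategy is the paper's: package $\Phi$ and $\Psi$ as elements of a single type $T$ of ``multi-pointed'' maps, argue that $T$ is a set (by reducing the proposition ``$T$ is a set'' to $X=Y=\two$, where it becomes the set of pointed maps out of a fourfold smash), so that $\Phi=\Psi$ is a proposition and \cref{lem:RPinf-elim}\ref{lem:RPinf-elim-prop} reduces it to associativity and commutativity of the ordinary cup product. The paper phrases the same reduction as ``the two elements of $T$ are \emph{uniquely} identified'' and finishes via Cavallo's trick; your version is equivalent.

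The gap is in the step you correctly flag as ``the real work'', but your proposed route through it is wrong. Before you can compare $\Phi$ and $\Psi$ in $T$, you must exhibit $\Psi$ as an element of $T$. The natural bihom data for $\Psi=\smile_Y\smile_X$ produce a map $\bigast_{y:Y}\bigast_{x:X}(f(x,y)=\pt)\to(\Psi(f)=\pt)$, whereas membership in $T$ (with $X$ outermost) demands a map from $\bigast_{x:X}\bigast_{y:Y}(f(x,y)=\pt)$. Converting one to the other is exactly \cref{lem:fubini}. Your suggestion that ``the join admits a Fubini/interchange law for families indexed by \emph{arbitrary} types'' is not available: the paper observes that for $X=\two$, $Y=\mathsf{hProp}$, $A(0,P)\coloneqq\neg P$, $A(1,P)\coloneqq P$, one iterated unordered join is contractible while the other is the suspension of LEM, so the two sides are not in general equivalent and no uniform construction is to be expected. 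The actual proof of \cref{lem:fubini} uses $X,Y:\RPinf$ essentially --- via \cref{lem:RPinf-ind}, \cref{lem:fun-charac}, and a bespoke HIT presentation of $\prod_{x:X}\bigast_{y:Y}A(x,y)$ (\cref{lem:HIT}) --- and is singled out in the paper as by far the hardest step. The equivalence $\Sigma(A\wedge B)\simeq A\ast B$ plays no role.
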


While easy to state, proving \cref{thm:cup-fubini} is far more difficult than
anything we have done so far. Its proof, which we assume for now but which will
be discussed at length later in the paper, necessitates a development of the theory of
unordered joins and constitutes the technical core of this paper. Before we
are faced with reality, let us reap its fruits prematurely and prove the
characterising properties of the Steenrod squares laid out in \cref{thm:main}.

\begin{proposition}\label{prop:cartan}
  The Steenrod squares satisfy the Cartan formula~\ref{ax4}. 
\end{proposition}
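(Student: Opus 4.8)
The plan is to reduce the Cartan formula \ref{ax4} to equation \eqref{eq:cartan-poly}, $(a \smile b)^X = a^X \smile b^X$, which in turn follows from the Fubini interchange law \cref{thm:cup-fubini}. First I would observe that \eqref{eq:cartan-poly} is an instance of \cref{thm:cup-fubini}: take $Y \coloneqq \two$ and let $f : \prod_{x : X}\prod_{y : \two} K_{n(x,y)}$ pick out $a$ when $y = 0$ and $b$ when $y = 1$ (with $n(x,y) \coloneqq m$ throughout, say $a,b : K_m$). Then $\bigsmile_{y : \two} f(x,y) = a \smile b$ for each $x$, so the left-hand side of Fubini becomes $\bigsmile_{x:X}(a \smile b) = (a \smile b)^X$ by definition \eqref{eq:S}, while the right-hand side is $\bigsmile_{y:\two}\bigsmile_{x:X} f(x,y) = (\bigsmile_{x:X} a) \smile (\bigsmile_{x:X} b) = a^X \smile b^X$. (A small amount of care is needed here to check that the unordered cup product reduces to the ordinary iterated cup product in the way claimed, and that the grading $\Sigma n$ comes out to $2m$; this is a routine unfolding.)

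Next I would use \eqref{eq:sq-poly} to rewrite both sides of \eqref{eq:cartan-poly} as polynomials in $t(X)$. On the left, $(a \smile b)^X = \sum_{i=0}^{2m}\sq{i}(a \smile b)\smile t(X)^{2m - i}$. On the right, substituting the polynomial representations of $a^X$ and $b^X$ and multiplying out using associativity and graded-commutativity of the cup product (recall we are working mod $2$, so signs vanish),
\[
a^X \smile b^X = \Big(\sum_{j} \sq{j}(a) \smile t(X)^{m-j}\Big)\smile\Big(\sum_{k}\sq{k}(b)\smile t(X)^{m-k}\Big) = \sum_{n}\Big(\sum_{j + k = n}\sq{j}(a)\smile\sq{k}(b)\Big)\smile t(X)^{2m - n}.
\]
Now \cref{lem:polynomial-representation} (uniqueness of polynomial representations of maps $\RPinf \to K_{2m}$) lets me equate coefficients of $t(X)^{2m - n}$, yielding $\sq{n}(a \smile b) = \sum_{i + j = n}\sq{i}(a)\smile\sq{j}(b)$ for each $n \le 2m$, which is exactly \ref{ax4}. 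For $n$ outside this range both sides vanish for connectedness reasons (cf.\ the footnote to \eqref{eq:sq-poly} and \cref{lem:ptd}), so the formula holds in all degrees.

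The only genuinely delicate point is bookkeeping: making sure that when we feed a $\two$-indexed family into \cref{thm:cup-fubini} the unordered cup product really does compute to the binary cup product (this is part of the definition of a graded commutativity structure, so it is immediate, but one must match up the grading functions $n : X \times \two \to \bN$ correctly), and that the coefficient-extraction via \cref{lem:polynomial-representation} is applied to the correct instance (maps $\RPinf \to K_{2m}$). Everything else is a formal manipulation of cup-product polynomials, with no signs to track since we are mod $2$. The real work — the proof of \cref{thm:cup-fubini} itself — is deferred to \cref{sec:fubini} and is not needed here.
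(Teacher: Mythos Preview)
Your argument is essentially the paper's own: specialise \cref{thm:cup-fubini} to $Y=\two$ to obtain \eqref{eq:cartan-poly}, expand both sides via \eqref{eq:sq-poly}, and identify coefficients using \cref{lem:polynomial-representation}. One small point: you take $a,b:K_m$ in the \emph{same} degree, which only yields a special case of~\ref{ax4}; the paper lets $a:K_i$, $b:K_j$ with $n(x,0)=i$, $n(x,1)=j$, and then the target is $K_{2(i+j)}$ (so your ``$\Sigma n$ comes out to $2m$'' and ``maps $\RPinf\to K_{2m}$'' should read $2(i+j)$, or $4m$ in your equal-degree case) --- with that adjustment the proof goes through unchanged.
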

\begin{proof}
Consider \cref{thm:cup-fubini} in these case where $Y$ is $\two$,
and $n$ and $f$ depend only their second arguments, so that they are given
simply by $i, j : \bN$ and $a : K_i$, $b : K_j$.
In this case, the $\two$-indexed `unordered' cup product reduces to the ordinary cup
product, and the $X$-indexed cup product reduces to $(-)^X$, so that
we end up with \cref{eq:cartan-poly}, $(a \smile b)^X = a^X \smile b^X$.
Combined with \cref{eq:sq-poly}, this gives the following:
\begin{align*}
	&\sum_{k=0}^{i+j} \sq{k}(a \smile b) \smile t(X)^{i+j-k}
	\\
	=  
	&\left(\sum_{l=0}^i \sq{l}(a) \smile t(X)^{i-l}\right)
			\smile
	\left(\sum_{m=0}^h \sq{m}(b) \smile t(X)^{j-m}\right)
	\\
	=  
	&\sum_{l=0}^i \sum_{m=0}^j \sq{l}(a) \smile \sq{m}(b) \smile t(X)^{i+j-l-m}
	.
\end{align*}
Since, for given $a$, $b$, the above identity holds in $K_{2(i+j)}$ for \emph{all} $X : \RPinf$,
	we may by \cref{lem:polynomial-representation} formally identify coefficients
	of the polynomials. This concludes the proof.
\end{proof}
\begin{proposition}\label{prop:square-cup}
  The Steenrod squares satisfy~\ref{ax3}: for $a : K_n$, we have $\sq{n}{(a)} = a \smile a$.
\end{proposition}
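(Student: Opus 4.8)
The plan is to read the identity off the polynomial characterisation \cref{eq:sq-poly} by evaluating it at the basepoint $\two : \RPinf$. Specialised to $a : K_n$, \cref{eq:sq-poly} reads
\[
  a^X = \sum_{i=0}^{n} \sq{i}(a) \smile t(X)^{n-i},
\]
and in this sum the summand with $i = n$ is $\sq{n}(a) \smile t(X)^0$, which equals $\sq{n}(a)$ since $t(X)^0 = 1 : K_0$ is a unit for the cup product. So it suffices to show that, after substituting $X \coloneqq \two$, the left-hand side becomes $a \smile a$ while every summand with $i < n$ vanishes.

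First I would observe that, by definition of the unordered cup product as a graded commutativity structure (\cref{def:graded-comm-str}), it reduces to the ordinary cup product for $X = \two$; since $a^X$ is the unordered cup product of the family constantly equal to $a$ (\cref{eq:S}), this gives $a^{\two} = a \smile a$. Next, because $t : \RPinf \to_\pt K_1$ is a \emph{pointed} equivalence and $\two$ is the basepoint of $\RPinf$, we have $t(\two) = 0$. Hence, for each $i < n$, the power $t(\two)^{n-i}$ is an iterated cup product of $0$ with itself of length at least one, and thus vanishes (the cup product is pointed in each argument, i.e.\ $0 \smile y = 0$), so $\sq{i}(a) \smile t(\two)^{n-i} = 0$. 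Substituting $X \coloneqq \two$ into the displayed equation then leaves only the $i = n$ term, yielding $a \smile a = \sq{n}(a)$.

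I do not expect a real obstacle here: the technical weight has already been absorbed by \cref{lem:polynomial-representation} and by the construction of the unordered cup product, and what is left is bookkeeping — fixing the convention that the empty power $t(X)^0$ is the unit $1 : K_0$, invoking the unit law $z \smile 1 = z$ and the vanishing $0 \smile y = 0$, and checking the degenerate case $n = 0$, where the identity reads $\sq{0}(a) = a \smile a$ and is consistent with \ref{ax1} since squaring is the identity on $K_0 = \bZtwo$. Note also that, because we evaluate \cref{eq:sq-poly} at the single point $\two$ rather than comparing two polynomials, we do not need the uniqueness clause of \cref{lem:polynomial-representation} for this particular property.
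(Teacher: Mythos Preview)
Your proposal is correct and follows essentially the same route as the paper: evaluate \cref{eq:sq-poly} at $X \coloneqq \two$, use $t(\two) = 0$ to kill all summands with $i < n$, and identify $a^{\two}$ with $a \smile a$ via the defining property of the unordered cup product. The paper's proof is just a terser version of yours, omitting the bookkeeping about unit laws and the $n=0$ case that you spell out.
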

\begin{proof}
Taking $X$ to be $\two$ in \cref{eq:sq-poly}, we have $t(X) = 0$ and so only one term in the sum remains:
$a^{\two} = \sq{n}(a)$.
We have $a^{\two} = a^2$ since the unordered cup product generalises the ordinary cup product.
This concludes the proof.
\end{proof}
An important fact about Steenrod squares not listed in \cref{thm:main} is that they are additive:
$\sq{n}(a+b) = \sq{n}(a) + \sq{n}(b)$. This is a consequence of \ref{ax:susp},
essentially because the action of any function on paths respects path composition.
But there is also a more direct proof.
\begin{lemma}\label{lem:additivity}
The Steenrod squares are additive: for $a, b : K_m$ we have $\sq{n}(a+b) = \sq{n}(a) + \sq{n}(b)$.
\end{lemma}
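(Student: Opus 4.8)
The plan is to follow the template of the proof of the Cartan formula (\cref{prop:cartan}): reduce additivity of $\sq{n}$ to the single identity
\[
  (a+b)^X = a^X + b^X \qquad\text{for all } X : \RPinf ,
\]
and then establish this identity by a ``freshman's dream'' computation in characteristic $2$. The reduction is immediate. Substituting the identity into \cref{eq:sq-poly} and using that the cup product is biadditive, one obtains in $K_{2m}$
\[
  \sum_{i}\sq{i}(a+b)\smile t(X)^{m-i} = \sum_{i}\bigl(\sq{i}(a)+\sq{i}(b)\bigr)\smile t(X)^{m-i}
\]
for every $X : \RPinf$, so matching coefficients via \cref{lem:polynomial-representation} --- exactly as in \cref{prop:cartan} --- yields $\sq{n}(a+b)=\sq{n}(a)+\sq{n}(b)$ for all $n$.

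To prove $(a+b)^X = a^X + b^X$, observe that $\widehat{a+b}$ is the pointwise sum $\widehat a + \widehat b$, so the left-hand side is $\bigsmile_{x:X}(\widehat a(x)+\widehat b(x))$. Since the unordered cup product is assembled from the biadditive cup product, it should satisfy a binomial law: writing $c_\phi(x)$ for $\widehat a(x)$ when $\phi(x)=0$ and $\widehat b(x)$ when $\phi(x)=1$, we expect
\[
  \bigsmile_{x:X}(\widehat a(x)+\widehat b(x)) = \sum_{\phi : \two^X}\ \bigsmile_{x:X}c_\phi(x) .
\]
The set $\two^X$ consists of the two constant maps together with the non-constant ones, which by \cref{lem:RPinf-charac} form a $2$-element type. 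The constant maps at $0$ and $1$ contribute $\bigsmile_{x:X}\widehat a(x)=a^X$ and $\bigsmile_{x:X}\widehat b(x)=b^X$. For a non-constant $\phi$, reindexing the unordered cup product along the equivalence $\phi$ --- legitimate since the construction is natural in $X : \RPinf$ --- collapses it to the ordinary cup product, and since $\widehat a$, $\widehat b$ are constant one gets $\bigsmile_{x:X}c_\phi(x) = \widehat a(\phi^{-1}(0))\smile\widehat b(\phi^{-1}(1)) = a\smile b$, the same for both non-constant $\phi$. Summing $a\smile b$ over the $2$-element type of non-constant maps (via the commutativity structure on $+$ from \cref{ex:EM+str}) gives $(a\smile b)+(a\smile b)$, which is $0$ since $\bZtwo$ has characteristic $2$. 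Hence $(a+b)^X = a^X + b^X$.

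The genuinely hard step is the binomial law for an \emph{arbitrary} $X : \RPinf$. When $X=\two$ it is just biadditivity of $\smile$ applied in each slot, but the equation lives in $K_{2m}$, which for $m\ge 1$ is neither a proposition nor a set, so neither form of $\RPinf$-elimination (\cref{lem:RPinf-elim}) applies directly --- the familiar obstruction encountered around \cref{ex:EM+str} and \cref{def:bihom-bad}. I would resolve it with the same strategy used there: replace the bare binomial equation by a set-valued enrichment of the structure defining the unordered cup product that additionally records the relevant ``additive multilinearity'' data (so the binomial identity becomes part of the structure, not a property), verify that this enriched type is a set for every $X : \RPinf$ --- it reduces at $X=\two$ to a set-level statement about $K_m\wedge K_m \to_\pt K_{2m}$ --- and conclude by \cref{lem:RPinf-elim} that the structure extends uniquely from $\two$ to all of $\RPinf$; the binomial law then holds by construction. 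Pinning down the right enrichment and its set-ness at $\two$ is where I expect the real work to be (alternatively one could route the argument through the unordered smash products of \cref{sec:fubini}). As the paper observes, additivity can instead be deduced from stability~\ref{ax:susp}: conjugating $\sq{n}$ by $\sigma$ exhibits it as a loop map and hence a homomorphism, since the action of a map on paths preserves concatenation and the H-space structure on $K_m$ is precisely that of $\Omega K_{m+1}$; I would give the direct argument and remark on this alternative.
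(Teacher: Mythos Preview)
Your reduction to the identity $(a+b)^X = a^X + b^X$ is exactly the paper's first move, and your binomial analysis of the cross terms is correct in spirit. But the paper finishes this off with a much slicker device than the set-valued enrichment you sketch. Instead of proving the equation $(a+b)^X = a^X + b^X$ directly, it proves the \emph{stronger} statement that the map $(-)^X : K_m \to K_{2m}$ has a unique delooping. The point is that ``has a unique delooping'' is a \emph{proposition}, so \cref{lem:RPinf-elim}\ref{lem:RPinf-elim-prop} applies immediately and one may take $X = \two$ without any further packaging. At $X = \two$ the map is $a \mapsto a \smile a$, and by a result of W\"arn a pointed map between Eilenberg--MacLane spaces deloops (necessarily uniquely) iff it is a homomorphism; here that is just the freshman's dream $(a+b)^2 = a^2 + b^2$, which holds mod~$2$. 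Once $(-)^X$ deloops, it is in particular additive, and you are done.

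Your route, by contrast, tries to establish the binomial expansion $\bigsmile_{x:X}(\widehat a + \widehat b)(x) = \sum_{\phi:\two^X}\bigsmile_{x:X}c_\phi(x)$ for arbitrary $X$. As you correctly diagnose, this equation lives in $K_{2m}$ and is neither a proposition nor set-valued, so you would need to cook up an auxiliary structure (``multilinearity data'') that is set-valued at $\two$; you leave this as the place ``where the real work is''. That may well be doable, but it is more laborious and you have not actually carried it out. The paper's trick --- trade the equation for a propositional predicate on the map --- bypasses this entirely and is worth remembering as a general technique for $\RPinf$-indexed arguments.
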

\begin{proof}
By \cref{eq:sq-poly}, it suffices to prove that $(a+b)^X = a^X + b^X$. 
In fact, one can show a stronger statement, namely that $(-)^X : K_m \to K_{2m}$ has a unique delooping.
Since this is a proposition, we may suppose that $X$ is $\two$,
i.e.\ it suffices to show that $(-)^2 : K_m \to K_{2m}$ has a unique delooping.
By \cite[Corollary 12]{Warn2023}, the delooping is unique if it exists, and it exists if
and only if $(a+b)^2 = a^2 + b^2$. This holds by distributivity and commutativity since we are working mod 2.
\end{proof}
\begin{remark}
Related to \cref{lem:additivity}, we remark that an alternative, simpler definition
of $\sq{n}$ is possible. The stability axiom \cref{ax:susp} tells us that
the map $\sq{n} : K_m \to_\pt K_{m+n}$ should be a delooping of $\sq{n} : K_{m-1} \to_\pt K_{m+n-1}$.
By \cite[Corollary 12]{Warn2023} and the fact that $(a+b)^2 = a^2 + b^2$,
the delooping exists and is unique so we could take this as a recursive definition of $\sq{n}$,
starting from the definition of $\sq{n} : K_{n} \to K_{2n}$ as $x \mapsto x \smile x$.
In this way, one would immediately get a stable cohomology operation, which is sufficient for some purposes,
but this definition seems to give no insight toward proving the Cartan formula or Adem relations.
\end{remark}
Before we continue with the remaining axioms, we will need the following 
lemma which will allow a computation of $\sq{0}$ on $K_1$.
\begin{lemma}\label{lem:exp-self}
For $X : \RPinf$ we have $t(X)^X = 0$.
\end{lemma}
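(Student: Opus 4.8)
The plan is to recast the statement as follows: the assignment $g : X \mapsto t(X)^X$ defines a map $\RPinf \to K_2$, and it suffices to show that $g$ is the constant map $0$. I would first observe that $g$ is pointed: since $t$ is pointed we have $t(\two) = 0$, and the unordered cup product is a bihom, so $0^{\two} = 0$, whence $g(\two) = 0$. By \cref{lem:polynomial-representation} there are unique $b_0 : K_0$, $b_1 : K_1$, $b_2 : K_2$ with $g(X) = b_0 \smile t(X)^2 + b_1 \smile t(X) + b_2$ for every $X$, and evaluating at $\two$ (where $t(\two)=0$) gives $b_2 = 0$. By uniqueness of the coefficients, $g = 0$ is equivalent to $b_0 = 0 = b_1$, so it remains to identify these two coefficients.

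For $b_1$: combining the polynomial expansion of $g$ with \cref{eq:sq-poly} for $a := t(X)$ and \cref{prop:square-cup} yields $t(X)^X = \sq{0}(t(X))\smile t(X) + t(X)\smile t(X)$; applying \cref{lem:polynomial-representation} once more to the pointed map $X \mapsto \sq{0}(t(X)) : \RPinf \to K_1$ writes it as $c_0 \smile t(X)$ for some $c_0 : K_0$, and comparing coefficients gives $b_1 = 0$ together with $b_0 = c_0 + 1$. Crucially, this does \emph{not} pin down $b_0$: doing so would require knowing $c_0$, i.e.\ knowing $\sq{0}$ on all of $K_1$ (as $t$ is an equivalence), which is precisely what the present lemma is meant to feed into. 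So $b_0$ has to be computed directly from the definition $t(X)^X = \bigsmile_{x:X}\widehat{t(X)}(x)$, bypassing the Steenrod squares entirely.

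The hard part is this last computation. My plan would be to exploit that the total space of the universal double cover, $\sum_{X:\RPinf}X$, is contractible (which follows from \cref{lem:RPinf-ind}, with centre $(\two,0)$): given $X : \RPinf$ \emph{equipped with a point} $x : X$, the equivalence $e^x : \two \simeq X$ identifies $X$ with $\two$, forcing $t(X) = 0$ and hence $t(X)^X = 0$, so one obtains a term of $\prod_{X:\RPinf}\prod_{x:X}(t(X)^X = 0)$ for free. What remains — and what I expect to be the real obstacle — is descending this along the double cover to a term of $\prod_{X:\RPinf}(t(X)^X = 0)$. This is not purely formal, because $t(X)^X = 0$ is a type in the $1$-type $K_2$ and hence not a proposition; descent amounts to trivialising the $\bZtwo$-action on the fibre $(t(\two)^{\two} = 0) \simeq \Omega K_2$ obtained by transporting along the loop of $\RPinf$, which in turn reduces to checking that $\ap_g$ kills the generating cells of $\RPinf$. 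The vanishing on the $1$-cell follows because cupping with $0$ is coherently constant; the delicate point is the accompanying $2$-cell coherence, which lives in $\Omega^2 K_2 \simeq \bZtwo$, and it is exactly here that the argument must genuinely differ from the classical computation (which would simply invoke $\sq{0} = \mathrm{id}$).
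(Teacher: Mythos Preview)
Your approach takes a long detour and leaves the crucial step unfinished, whereas the paper's argument is essentially two lines. You correctly observe that for every $x : X$ one has $t(X) = 0$, since $e^x : \two \simeq X$ trivialises $X$. But you then apply the bihom structure \emph{pointwise in $x$} to obtain a term of $(x : X) \to (t(X)^X = 0)$ and worry about descending this along the double cover --- a descent you flag as delicate at the $2$-cell and do not complete. The paper instead uses the family $(x : X) \to (t(X) = 0)$ one step earlier: by function extensionality it yields an identification of the \emph{input}, $\widehat{t(X)} = (\lambda x.\, 0)$, as functions $X \to K_1$, and then the first component of $\mathsf{isBiHom}$ (that the unordered cup product sends the constantly-$0$ family to $0$) immediately gives $t(X)^X = 0^X = 0$. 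No descent issue arises, because the $X$-indexed coherence is absorbed by funext on the argument of the cup product rather than on its value; the paper's remark following the proof highlights exactly this point.

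Your polynomial-representation analysis (showing $b_1 = b_2 = 0$ and $b_0 = c_0 + 1$) is correct but does no real work: as you yourself note, it reduces the lemma to determining $c_0$, which is equivalent to \cref{lem:sq-k1} and hence circularly to the lemma at hand. Thus your ``hard part'' is in fact the entire content, and it remains open at the $\Omega^2 K_2$ coherence. That coherence can presumably be settled, but it is strictly harder than the one-line argument above; the idea you are missing is simply to move the use of $(x:X)$ from the \emph{output} type $(t(X)^X = 0)$ to the \emph{input} function $\widehat{t(X)}$.
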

\begin{proof}
We have to show that
$\bigsmile_{x : X} t(X) = 0$. Since the unordered cup product has the structure
of a bihom, it suffices to prove that $t(X) = 0$ in $K_1$ for all $x : X$.
This is direct; given $x : X$ we indeed have $X \simeq \two$ so that $t(X) = 0$.
\end{proof}
The above proof may seem curious: we argue that $t(X)^X = 0^X$, not by showing 
that $t(X) = 0$, but by showing $t(X) = 0$ for all $x : X$.
\begin{lemma}\label{lem:sq-k1}
For $x : K_1$, we have $\sq{0}(x) = x$.
\end{lemma}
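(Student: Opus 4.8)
The plan is to reduce the claim, via the characterising polynomial identity~\cref{eq:sq-poly}, to the identity $t(X)^X = 0$ supplied by~\cref{lem:exp-self}, and then to ``divide by $t(X)$'' using uniqueness of polynomial representations. Since $t$ is an equivalence, it suffices to prove $\sq{0}(t(X)) = t(X)$ for every $X : \RPinf$. Instantiating~\cref{eq:sq-poly} at $a \coloneqq t(X) : K_1$ (so $m = 1$), which then reads $t(X)^X = \sq{0}(t(X)) \smile t(X) + \sq{1}(t(X))$, and using~\cref{prop:square-cup} to identify $\sq{1}(t(X)) = t(X) \smile t(X)$, we learn that $t(X)^X = \sq{0}(t(X)) \smile t(X) + t(X) \smile t(X)$. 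Since $t(X)^X = 0$ by~\cref{lem:exp-self}, and since we are working mod $2$, this yields the identity $\sq{0}(t(X)) \smile t(X) = t(X) \smile t(X)$, valid for all $X : \RPinf$.

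It remains to cancel the common factor $t(X)$. The map $s \coloneqq (X \mapsto \sq{0}(t(X))) : \RPinf \to K_1$ is pointed, by pointedness of $t$ together with~\cref{lem:ptd}. Hence~\cref{lem:polynomial-representation} (in the case $n = 1$) lets us write $s(X) = b_0 \smile t(X)$ for a unique $b_0 : K_0$; the a priori second coefficient, an element of $K_1$, vanishes because $s$ is pointed. Substituting this into the identity above and using associativity of the cup product (and that $1 : K_0$ is a unit for $\smile$), we obtain $b_0 \smile t(X)^2 = 1 \smile t(X)^2$ for all $X : \RPinf$. In other words the two maps $\gysin_2(b_0, 0, 0)$ and $\gysin_2(1, 0, 0)$ from $\RPinf$ to $K_2$ coincide, and since $\gysin_2$ is an equivalence, hence injective, we conclude $b_0 = 1$. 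Therefore $s(X) = 1 \smile t(X) = t(X)$; that is, $\sq{0}(t(X)) = t(X)$, as required.

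The computation itself is short, and~\cref{lem:exp-self} (the only genuinely delicate input, and already established) does most of the work. The point requiring care is the cancellation step: one cannot simply cancel $t(X)$ from the identity $\sq{0}(t(X)) \smile t(X) = t(X) \smile t(X)$, since the cup product is not injective. The fix is to first record, using~\cref{lem:polynomial-representation} in degree $1$ and pointedness of $s$, that $X \mapsto \sq{0}(t(X))$ is already of the form $b_0 \smile t(X)$; multiplying through by $t(X)$ then produces an honest degree-$2$ polynomial identity in $t(X)$, to which the uniqueness half of~\cref{lem:polynomial-representation} applies directly. Once this is set up, nothing further stands in the way.
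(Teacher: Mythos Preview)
Your proof is correct and follows the same overall arc as the paper's: instantiate \cref{eq:sq-poly} at $a = t(X)$, use \cref{lem:exp-self} for $t(X)^X = 0$ and \cref{prop:square-cup} for $\sq{1}(t(X)) = t(X)^2$, reduce to a degree-$2$ polynomial identity in $t(X)$, and compare coefficients via \cref{lem:polynomial-representation}. The only difference is in how you arrive at the representation $\sq{0}(t(X)) = b_0 \smile t(X)$: the paper invokes directly that a pointed map $K_1 \to_\pt K_1$ is multiplication by some $c \in \bZtwo$, whereas you deduce the same fact by applying \cref{lem:polynomial-representation} in degree $1$ to the pointed map $\sq{0} \circ t$ and observing that the $K_1$-coefficient vanishes. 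Since $t : \RPinf \simeq_\pt K_1$, these two justifications are equivalent, and the paper's route is marginally shorter.
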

\begin{proof}
We have
$0 = t(X)^X = \sq{0}(t(X)) t(X) + \sq{1}(t(X))$ by \cref{eq:sq-poly}.
Here $\sq{0}$ is a pointed map $K_1 \to_\pt K_1$ so it is given by multiplication by some element
$c$ of $\bZtwo$. We have that $\sq{1}(t(X)) = t(X)^2$ by \cref{prop:square-cup}.
Thus 
$0 = c\,t(X)^2 + t(X)^2$. By formally identifying coefficients of polynomials, we
get $c = 1$, so that $\sq{0}$ is the identity map, as needed.
\end{proof}
In order to prove the stability axiom~\ref{ax:susp}, it will be helpful
to represent loop spaces in terms of maps from $\sphere{1}$.
In our setting, this is mediated by cup products and the pointed map 
$e : \sphere{1} \to_\pt K_1$ which sends the generating loop of $\sphere{1}$ to
the non-trivial loop of $K_1$, i.e.\ $\sigma_0(1)$,
according to the following lemma.
\begin{lemma}\label{lem:pytte}
With $e : \sphere{1} \to_\pt K_1$ as above and $n : \bN$, 
the composite of $\sigma_n : K_n \to \Omega K_{n+1}$ with
the canonical equivalence $\Omega K_{n+1} \to (\sphere{1} \to_\pt K_{n+1})$
is given by
$a \mapsto (x \mapsto e(x) \smile a)$.
\end{lemma}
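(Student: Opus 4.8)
The plan is to use that a pointed map out of $\sphere{1}$ amounts to a single loop. Concretely, the canonical equivalence $\Omega K_{n+1} \simeq (\sphere{1} \to_\pt K_{n+1})$ sends a loop $p$ to the map $\widetilde p$ obtained by $\sphere{1}$-recursion with $\widetilde p(\base) \coloneqq \pt$ and $\ap_{\widetilde p}(\Loop) \coloneqq p$; its inverse sends a pointed map $g$ to the loop extracted from $\ap_g(\Loop)$ (conjugated by the pointing path of $g$). Unfolding the composite in the statement, it therefore suffices to prove that the pointed map $x \mapsto e(x)\smile a$ is the image of $\sigma_n(a)$ under this equivalence, or equivalently, applying the inverse, that the loop extracted from $x \mapsto e(x)\smile a$ is $\sigma_n(a)$ in $\Omega K_{n+1}$.

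First I would compute that loop. Writing $x \mapsto e(x)\smile a$ as the composite $((-)\smile a)\circ e$ and using functoriality of $\ap$, we get $\ap_{(-)\smile a}(\ap_e(\Loop))$, which by the defining property of $e$ equals $\ap_{(-)\smile a}(\sigma_0(1))$, where $1$ is the unit of $\bZtwo = K_0$. Now $\ap_{(-)\smile a}$ restricted to $\Omega K_1$ is $\Omega((-)\smile_1 a)$, so the compatibility of the cup product with looping recalled above (see also \cite[Lemma 29]{LM24}), instantiated at degree $m = 0$ as $\Omega((-)\smile_1 a)\circ\sigma_0 = \sigma_n\circ((-)\smile_0 a)$ and evaluated at the unit $1 : K_0$, gives $\ap_{(-)\smile a}(\sigma_0(1)) = \sigma_n(1\smile a)$. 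Finally $1 \smile a = a$ because $1 : K_0$ is the unit of the graded ring structure on $K_{(-)}$ (a standard fact, which can be seen by a short induction on degree from the degree-$(0,0)$ case, where the cup product is multiplication in $\bZtwo$), so the extracted loop is $\sigma_n(a)$, as required.

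The main obstacle is the bookkeeping of basepoint-preservation data that I glossed over: the map $e$ is strictly pointed by construction, but $(-)\smile a : K_1 \to_\pt K_{n+1}$ and the point $\pt_{K_1}\smile a$ need not be, so both the passage ``loop extracted from $g$'' $\mapsto \ap_g(\Loop)$ and the identification $\Omega((-)\smile_1 a) = \ap_{(-)\smile a}$ used above in general involve conjugation by pointing paths, and one must check that these cancel so the final equality lands exactly on $\sigma_n(a)$ rather than a conjugate of it. A clean way to organise this is to instead prove the statement by constructing a pointed homotopy $x \mapsto e(x)\smile a \sim \widetilde{\sigma_n(a)}$ directly by $\sphere{1}$-induction: the two maps agree on $\base$, the computation above shows they agree on $\Loop$, and the only remaining goal is filling one square built from the same pointing paths, which is a routine if slightly tedious calculation.
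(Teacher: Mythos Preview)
Your proposal is correct and follows essentially the same argument as the paper: reduce to checking the action on $\Loop$, use functoriality of $\ap$ together with $\ap_e(\Loop) = \sigma_0(1)$, invoke the compatibility of the cup product with looping to get $\sigma_n(1\smile a)$, and conclude using unitality. The paper's proof is in fact slightly terser and, like your first pass, leaves the basepoint bookkeeping implicit; your closing paragraph identifying this as the only real obstacle is accurate and goes a bit beyond what the paper spells out.
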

\begin{proof}
Given $a : K_n$, it suffices to show that 
the action of $((-)\smile a) \circ e : \sphere{1} \to_\pt K_{n+1}$
on the generating loop, i.e.\ $\mathsf{loop} : \Omega \sphere{1}$, is given by $\sigma_n(a) : \Omega K_{n+1}$.
We have 
\begin{align*}
\Omega(((-)\smile a)\circ e)(\mathsf{loop}) 
		&= \Omega((-)\smile a)(\Omega(e)(\mathsf{loop}))
	\\
		&= \Omega((-)\smile a)(\sigma_0(1))
	\\
		&= \sigma_n (1 \smile a)
	\\
		&= \sigma_n (a)
\end{align*}
where, in the second-to-last step, we use the fact that the cup product respects looping.
\end{proof}

\begin{proposition}\label{prop:suspension}
  The Steenrod squares satisfy the stability axiom~\ref{ax:susp}.
\end{proposition}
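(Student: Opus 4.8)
The plan is to use the $\sphere{1}$-representation of loop spaces from \cref{lem:pytte} to rewrite the stability composite as a statement about the cup product, and then evaluate it using the Cartan formula together with the values of $\sqind{0}{1}$ and $\sqind{1}{1}$ from \cref{lem:sq-k1} and \cref{prop:square-cup}. Write $\psi_j : K_j \xrightarrow{\sim} (\sphere{1} \to_\pt K_{j+1})$ for the composite of $\sigma_j$ with the canonical equivalence $\Omega K_{j+1} \simeq (\sphere{1} \to_\pt K_{j+1})$, which by \cref{lem:pytte} is $\psi_j(b) = (x \mapsto e(x) \smile b)$; note that $\psi_j$ is a pointed equivalence. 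By naturality, $\Omega(\sqind{n}{m+1})$ corresponds under the canonical equivalences to postcomposition with $\sqind{n}{m+1}$, so the composite in \ref{ax:susp} is $a \mapsto \psi_{m+n}^{-1}(x \mapsto \sqind{n}{m+1}(e(x) \smile a))$. Since $\psi_{m+n}$ is an equivalence, it therefore suffices to prove, for every $a : K_m$, the equality of pointed maps $\sphere{1} \to_\pt K_{m+1+n}$
\[
(x \mapsto \sqind{n}{m+1}(e(x) \smile a)) = (x \mapsto e(x) \smile \sqind{n}{m}(a)).
\]

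I would evaluate the left-hand side with the Cartan formula (\cref{prop:cartan}) applied to $e(x) : K_1$ and $a : K_m$. Since $\sqind{i}{1} = 0$ for $i \geq 2$ by \ref{ax2}, only the terms $i = 0, 1$ contribute, and using $\sqind{0}{1}(e(x)) = e(x)$ (\cref{lem:sq-k1}) and $\sqind{1}{1}(e(x)) = e(x) \smile e(x)$ (\cref{prop:square-cup}) one gets
\[
\sqind{n}{m+1}(e(x) \smile a) = e(x) \smile \sqind{n}{m}(a) + (e(x) \smile e(x)) \smile \sqind{n-1}{m}(a).
\]
Hence the two sides of the desired identity differ precisely by the pointed map $x \mapsto (e(x) \smile e(x)) \smile \sqind{n-1}{m}(a)$, which is the composite of $e \smile e : \sphere{1} \to_\pt K_2$ with the pointed map $(-) \smile \sqind{n-1}{m}(a)$. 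So it is enough to show that $e \smile e$ is null.

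To see this, note $e \smile e = \sqind{1}{1} \circ e$ by \cref{prop:square-cup}, so under the canonical equivalence $(\sphere{1} \to_\pt K_2) \simeq \Omega K_2$ the map $e \smile e$ corresponds to $\ap_{\sqind{1}{1}}(\sigma_0(1)) \in \Omega K_2$, using that $e$ sends $\Loop$ to $\sigma_0(1)$. Now $\sqind{1}{1}$ is the diagonal $K_1 \to K_1 \times K_1$ followed by the uncurried cup product, so for any loop $\gamma : \Omega K_1$, writing $(\gamma, \gamma)$ as the concatenation $(\gamma, \refl) \cdot (\refl, \gamma)$ of loops in $K_1 \times K_1$, one gets $\ap_{\sqind{1}{1}}(\gamma) = \ap_{\smile}(\gamma, \refl) \cdot \ap_{\smile}(\refl, \gamma) = \refl \cdot \refl = \refl$, the two factors vanishing because the cup product is pointed in each argument ($c \smile \pt = \pt$ and $\pt \smile c = \pt$). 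Thus $\ap_{\sqind{1}{1}}$ is constantly $\refl$, so $e \smile e$ is null, and the proof is complete.

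The main obstacle is the last step — or rather, the fact that the whole argument must be carried out at the level of pointed maps out of $\sphere{1}$ rather than pointwise. For a fixed $x : \sphere{1}$ the element $e(x) \smile e(x) = \sqind{1}{1}(e(x))$ is \emph{not} provably $0$ (the map $\sqind{1}{1} : K_1 \to K_2$ is itself nonzero), and it is only the map $x \mapsto e(x) \smile e(x)$ that is null, by the universal property of $\sphere{1}$. Consequently the pointwise Cartan formula and the pointwise identities on $K_1$ must be assembled, via function extensionality, into equalities of maps, and one must keep track of the basepoint-coherence data when inverting the pointed equivalence $\psi_{m+n}$; these checks are routine but fiddly.
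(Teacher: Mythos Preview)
Your argument is correct and matches the paper's: both reduce via \cref{lem:pytte} and the Cartan formula to showing that the pointed map $e \smile e : \sphere{1} \to_\pt K_2$ is null. The paper dispatches this last step by observing that the map factors through the diagonal $\Delta : \sphere{1} \to \sphere{1} \wedge \sphere{1}$, which is null for any suspension, whereas you give the equivalent explicit Eckmann--Hilton computation on loops; your closing remark about the pointed-versus-pointwise subtlety is well taken and is in fact more careful than the paper's own phrasing.
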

\begin{proof}
Let  $\sqind{n}{m}$ denote the Steenrod square $K_m \to K_{m+n}$, 
let $\sigma_m : K_k \xrightarrow{\sim}_\pt \Omega K_{m+1}$ and let
$\tau_{m}$ denote the canonical equivalence
$\Omega K_m \xrightarrow{\sim} (\sphere{1} \to_\pt K_m)$.
Given $m,n : \bN$, we wish to show that square (A) in the following diagram commutes.
\[\begin{tikzcd}[ampersand replacement=\&]
	{K_m} \&\& {K_{m+n}} \\
	{\Omega K_{m+1}} \&\& {\Omega K_{m+n+1}} \\
	{(\sphere{1} \to_\pt K_{m+1})} \&\& {(\sphere{1} \to_\pt K_{m+n+1})}
	\arrow[""{name=0, anchor=center, inner sep=0}, "{\sqind{n}{m}}", from=1-1, to=1-3]
	\arrow["{\sigma_m}"', from=1-1, to=2-1]
	\arrow["{\sigma_{m+n}}", from=1-3, to=2-3]
	\arrow[""{name=1, anchor=center, inner sep=0}, "{\Omega \sqind{n}{m+1}}"{description}, from=2-1, to=2-3]
	\arrow["{\tau_{m+1}}"', from=2-1, to=3-1]
	\arrow["{\tau_{m+n+1}}", from=2-3, to=3-3]
	\arrow[""{name=2, anchor=center, inner sep=0}, "{\sqind{n}{m+1} \circ (-)}"', from=3-1, to=3-3]
	\arrow["{\text{(A)}}"{description}, draw=none, from=1, to=0]
	\arrow["{\text{(B)}}"{description}, draw=none, from=2, to=1]
\end{tikzcd}\]
To this end, we note that it is easy to see that square (B) commutes
and that all vertical maps are equivalences. Hence, it suffices to
show that the outer square commutes.
By \cref{lem:pytte}, the vertical composites are given by
$ a \mapsto (x \mapsto e(x) \smile a) $.
%
Thus it suffices to show that for every $a : K_m$ and $x : \sphere{1}$, we have
\[
\sq{n}(e(x) \smile a) = e(x) \smile \sq{n}(a)
	.
\]
By the Cartan formula, the left hand side computes to
$\sq{0}(e(x)) \smile \sq{n}(a) + \sq{1}(e(x)) \smile \sq{n-1}(a)$.
By \cref{lem:sq-k1} we have $\sq{0}(e(x)) = e(x)$ so it suffices
to show that $\sq{1}(e(x)) = 0$, i.e.\ that $e(x) \smile e(x) = 0$.

To see why this holds, one can simply note that the map $x \mapsto e(x) \smile e(x)$ factors as
\[
\sphere{1} \xrightarrow{\Delta} \sphere{1} \wedge \sphere{1} \xrightarrow{\smile} K_1 \wedge K_1.
\]
The diagonal map $\Delta : A \to A \wedge A$ vanishes whenever $A$ is
a suspension (so, in particular when $A = \sphere{1}$). This follows by straightforward suspension induction.
%
\end{proof}
\begin{proposition}\label{prop:zeroth-sq}
  The Steenrod squares satisfy axiom~\ref{ax1}, i.e.\ $\sq{0} = \mathsf{id}$.
\end{proposition}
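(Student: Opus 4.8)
The plan is to prove $\sqind{0}{m} = \mathsf{id}_{K_m}$ for every $m \ge 0$ by induction on $m$, using the stability axiom (\cref{prop:suspension}) together with uniqueness of deloopings to propagate the result up the Eilenberg--MacLane tower from a computation in the bottom degree. The key observation is that stability says precisely that $\sqind{0}{m+1}$ is a delooping of $\sqind{0}{m}$, so once we know $\sqind{0}{m}=\mathsf{id}_{K_m}$ we know that $\sqind{0}{m+1}$ and $\mathsf{id}_{K_{m+1}}$ are two deloopings of the same map, hence equal.

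Concretely, for the base case $m = 0$ I would use that $K_0 \simeq \bZtwo$ is a set and invoke \cref{prop:square-cup} (axiom~\ref{ax3}): $\sqind{0}{0}(x) = x \smile x$, and in degree $(0,0)$ the cup product is just multiplication in $\bZtwo$, so $\sqind{0}{0}(x) = x \cdot x = x$ for every $x \in \bZtwo$; thus $\sqind{0}{0} = \mathsf{id}_{K_0}$. (The case $m=1$ is also given directly by \cref{lem:sq-k1}, though this is not logically needed once the induction is running.) For the inductive step, assume $\sqind{0}{m} = \mathsf{id}_{K_m}$. By \cref{prop:suspension}, $\sqind{0}{m} = \sigma_m^{-1} \circ \Omega(\sqind{0}{m+1}) \circ \sigma_m$ where $\sigma_m : K_m \simeq_\pt \Omega K_{m+1}$, so $\sqind{0}{m+1}$ is a delooping of $\sqind{0}{m} = \mathsf{id}_{K_m}$. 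But $\mathsf{id}_{K_{m+1}}$ is trivially a delooping of $\mathsf{id}_{K_m}$ as well, since $\sigma_m^{-1} \circ \Omega(\mathsf{id}_{K_{m+1}}) \circ \sigma_m = \mathsf{id}_{K_m}$. By uniqueness of deloopings of maps between Eilenberg--MacLane spaces (\cite[Corollary 12]{Warn2023}, exactly as used in the proof of \cref{lem:additivity}), these two deloopings agree, i.e.\ $\sqind{0}{m+1} = \mathsf{id}_{K_{m+1}}$. This closes the induction.

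There is no serious obstacle here — this is arguably the easiest of the axioms — but the one point that needs care is the application of the uniqueness-of-deloopings result: we are using that a pointed self-map of $K_{m+1}$ is determined by the composite $\sigma_m^{-1}\circ\Omega(-)\circ\sigma_m$, and one should check this is a legitimate instance of \cite[Corollary 12]{Warn2023} for the tower $K_{-}$ (it is, since $K_{m+1}$ is homogeneous). Note also that this is genuinely where the base case $m=0$ must be done by hand: for $m = 0$ the map $\Omega(-)$ on pointed self-maps of $K_0$ is not injective (it lands in the contractible type $\Omega K_0$), so the deloopability argument cannot be bootstrapped from nothing and the computation in $\bZtwo$ is essential. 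Everything else is routine bookkeeping with the equivalences $\sigma_m$.
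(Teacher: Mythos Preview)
Your proposal is correct and follows essentially the same route as the paper: reduce to the base case $m=0$ via stability together with the fact that looping $(K_{m+1} \to_\pt K_{m+1}) \to (K_m \to_\pt K_m)$ is an equivalence (your ``uniqueness of deloopings''), and then compute $\sqind{0}{0}(x) = x\smile x = x$ in $\bZtwo$ using \ref{ax3}. The paper states the reduction in one shot rather than as an explicit induction, but the content is the same.
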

\begin{proof}
The zeroth square lives in the type of pointed functions $K_n \to_\pt K_n$ -- a
type which we understand well: looping  $(K_n \to_\pt K_n) \to (K_{n-1} \to_\pt
		K_{n-1})$ is an equivalence for each $n\geq 0$. Since looping preserves
the identity function, it is thus, by~\ref{ax:susp}, enough to show that
$\sqind{0}{0} : K_0 \to K_0$ is the identity. By \ref{ax3}, we have
$\sqind{0}{0}(x) = x \smile x$. However, since $K_0 \coloneqq  \bZtwo$, the cup
product here is simply multiplication in $\bZtwo$ and thus $\sqind{0}{0}(x)
	= x \smile x = x$.
\end{proof}
\begin{proposition}\label{prop:adem}
  The Steenrod squares satisfy the Adem relations~\ref{ax:adem}. 
\end{proposition}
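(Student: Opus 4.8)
The plan is to derive the Adem relations from the Fubini interchange law (\cref{thm:cup-fubini}): just as in the classical setting, the relations will fall out of a symmetry --- here, the $X\leftrightarrow Y$ symmetry provided by \cref{thm:cup-fubini}. First I would specialise \cref{thm:cup-fubini} to constant data: taking $n$ constantly equal to $m$ and $f$ constantly equal to some $a:K_m$, both iterated unordered cup products collapse to $(-)^{(-)}$, and the theorem becomes
\[(a^X)^Y = (a^Y)^X\]
for all $X,Y:\RPinf$. In other words, ``$a^{X\times Y}$'' may be formed in either order; this single element of $K_{4m}$, expanded in two ways, is what will encode the Adem relations.

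Next I would expand both sides as bivariate polynomials in the degree-one classes $u=t(X)$ and $v=t(Y)$. The key auxiliary fact is that, for $u:K_1$, one has $\sq{q}(u^r)=\binom{r}{q}\,u^{r+q}$: indeed $\sq{0}(u)=u$ by \cref{lem:sq-k1}, $\sq{1}(u)=u\smile u$ by \cref{prop:square-cup}, and $\sq{q}(u)=0$ for $q\ge 2$ by \ref{ax2}, so the total square of $u$ equals $u+u^2$, whence multiplicativity of the total square (the Cartan formula, \cref{prop:cartan}) gives $\sum_q\sq{q}(u^r)=(u+u^2)^r$. Feeding this into \cref{eq:sq-poly} twice --- first $a^X=\sum_i\sq{i}(a)\smile u^{m-i}$, then $(a^X)^Y=\sum_j\sq{j}(a^X)\smile v^{2m-j}$, expanding $\sq{j}(a^X)$ by additivity (\cref{lem:additivity}) and the Cartan formula --- exhibits $(a^X)^Y$ as an explicit polynomial in $u$ and $v$ whose coefficients are binomial multiples (mod $2$) of the composites $\sq{p}(\sq{i}(a))$. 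The expansion of $(a^Y)^X$ is the same expression with $u$ and $v$ interchanged.

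It then remains to compare coefficients. For this I would first record a two-variable strengthening of \cref{lem:polynomial-representation}: every map $\RPinf\times\RPinf\to K_N$ has a unique representation as a polynomial in $t(X)$ and $t(Y)$, obtained by iterating \cref{lem:polynomial-representation}. Matching the coefficient of each monomial $u^A v^B$ in $(a^X)^Y=(a^Y)^X$ then yields, for every $m$ and every $a:K_m$, a family of linear identities among the composites $\sq{p}(\sq{i}(a))$. By the stability axiom \ref{ax:susp}, these identities --- and hence the Adem relations --- are determined by their instances in arbitrarily high degree $m$ (looping and conjugating by $\sigma$ respects composition and addition), so it suffices to treat $m$ large relative to $n$ and $k$, where no square in sight vanishes for connectivity reasons; this disposes of the bookkeeping around the truncation in \cref{def:squares}.

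The main obstacle is the final, purely combinatorial, step: reorganising the symmetric system of mod-$2$ binomial identities so obtained into the precise stated form
\[\sq{n}\circ\sq{k} = \sum_{i=0}^{\lfloor n/2\rfloor}\binom{k-i-1}{n-2i}\,\sq{n+k-i}\circ\sq{i}\qquad\text{for }n<2k.\]
This is exactly the combinatorial heart of the classical Adem argument --- passing from the relations imposed by the $u\leftrightarrow v$ symmetry to a statement about admissible monomials --- and it proceeds through a chain of binomial-coefficient identities modulo $2$, conveniently packaged (as in the Bullett--Macdonald argument) via a generating function. Everything else --- the bivariate polynomial lemma, the computation of $\sq{q}$ on powers of $t$, and the reduction to large $m$ --- is routine given the results already established.
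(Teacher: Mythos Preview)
Your proposal is correct and follows essentially the same route as the paper: specialise \cref{thm:cup-fubini} to constant data to obtain $(a^X)^Y=(a^Y)^X$, expand both sides as polynomials in $t(X)$ and $t(Y)$ via \cref{eq:sq-poly}, additivity, and the Cartan formula, compare coefficients using an iterated \cref{lem:polynomial-representation}, and defer the residual binomial manipulation to the classical argument. The only organisational difference is that the paper packages the expansion via the identity $t(X)^Y = t(X)^2 + t(X)\smile t(Y)$ (applying $(-)^Y$ as a ring homomorphism), whereas you compute $\sq{q}(u^r)=\binom{r}{q}u^{r+q}$ and apply \cref{eq:sq-poly} a second time; these are equivalent, and your extra remark about passing to large $m$ via stability is harmless but not needed in the paper's version.
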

\begin{proof}
Given $m : \bN$ and $a : K_m$, consider \cref{thm:cup-fubini} in the case
where $n$ and $f$ are constantly $m$ and $a$.
In this case, we have for any $X, Y : \RPinf$ that
\[
(a^X)^Y = (a^Y)^X.
\]
The idea is now simply to expand each side using \cref{eq:sq-poly}.
We also make use of \cref{lem:additivity}, the Cartan formula, and the fact that
$t(X)^Y = t(X)^2 + t(X) \smile t(Y)$, which follows from \ref{ax3}.
In this way, we get the following.
\begin{align*}
(a^X)^Y &=
	\left( \sum_i \sq{i}(a) \smile t(X)^{n-i}\right)^Y
	\\
	&= \sum_i \sq{i}(a)^Y \smile (t(X)^2 + t(X) t(Y))^{n-i}
	\\
	&= \sum_{i,j,k} {n-i \choose k}\sq{j}\sq{i}(a)t(Y)^{2n-j-k} t(X)^{n+k-i}
\end{align*}
In the same way, one can express $(a^Y)^X$ as a polynomial in $t(X)$ and $t(Y)$.
Since we have $(a^X)^Y = (a^Y)^X$, we can formally identify coefficients in these polynomials,
by repeated application of \cref{lem:polynomial-representation}.
The steps required to go from here to the Adem relations are the same as in the classical case;
see \cite[Page 345]{hausmann2014mod} for details.
\end{proof}
This concludes the proof of~\cref{thm:main}. A natural question to ask after seeing the short and snappy proofs in this section is whether, perhaps, the setting of HoTT makes working with cohomology operations like the Steenrod squares `easier' than in a more traditional setting.
Although there are aspects of our setup here which certainly seem to speak in favour of HoTT, we wish to use this section to emphasise that \emph{a lot} of the heavy lifting is done by \cref{thm:cup-fubini} which we have, thus far, only assumed.
Thus, this question entirely hinges on the difficulty of proving this statement.
For this reason, we will now devote the remainder of the paper to proofs. They
are interesting in their own right, as they force us to develop a fair bit of
novel machinery surrounding unordered joins.
\section{Unordered joins and their Fubini theorem}
\label{sec:fubini}
We now set out to prove~\cref{thm:cup-fubini}. This theorem states that our
unordered cup product satisfies a certain `Fubini theorem'. 
Recall from the definition of the unordered cup product
$\smile_X : \Pi_{x : X} K_{n(x)} \to K_{\Sigma n}$
that it has the structure of a bihom, i.e.\ it is pointed in each argument,
and that it is non-trivial.
This characterises $\smile_X$ up to contractible choice, and
so anything we prove about $\smile_X$ should come from this characterisation.
In order to prove \cref{thm:cup-fubini}, we would like to have a similar characterisation
of the iterated cup product
\[
	\smile_X \smile_Y : \Pi_{x : X} \Pi_{y : Y} K_{n(x,y)} \to K_{\Sigma n}.
\]
In other words, we would like to find a way to uniquely characterise $\smile_X \smile_Y$,
that is symmetric in $X$ and $Y$.
Morally, this characterisation is that $\smile_X \smile_Y$ is coherently pointed
in each of its $X \times Y$-many arguments.
Thus we are lead to consider a generalisation of $\mathsf{isBiHom}$ to the 4-element
type $X \times Y$. Already defining such a generalisation is rather complicated; 
it involves the combinatorics of all non-empty (decidable) subsets of $X \times Y$ 
and their inclusions.
So it will be helpful to have another perspective on $\mathsf{isBiHom}$.
In fact, Brunerie~\cite{Brunerie17} never considered $\mathsf{isBiHom}$, and instead
worked with its corepresenting object, the unordered smash product:
\begin{definition}\label{def:unordered-smash}
  Let $X : \RPinf$ and $A : X \to \UU_\pt$ be a family of pointed types. We define the unordered smash product of $A$, denoted $\bigwedge_{x:X}A(x)$, by the following pushout
\[\begin{tikzcd}[ampersand replacement=\&,row sep = .4cm , column sep = 1.5cm]
	(x:X) \times A(x) \&\& \prod_{x : X} A(x) \\
	X  \&\& \bigwedge_{x:X}A(x)
	\arrow["\fst"',from=1-1, to=2-1]
	\arrow["(x{,}a) \,\mapsto\, {\pairfun{x \mapsto a}{ \neg x \mapsto \pt}}",from=1-1, to=1-3]
	\arrow[from=2-1, to=2-3]
	\arrow[from=1-3, to=2-3]
	\arrow["\lrcorner"{anchor=center, pos=0.125, rotate=180}, draw=none, from=2-3, to=1-1]
\end{tikzcd}\]
We take this type to be pointed by $\mathsf{inr}(\lambda x\,.\pt)$. 
\end{definition}
It is easy to see that
$\mathsf{isBiHom}(f)$ is equivalent to asking that $f$ factors through
$\inr : \Pi_{x : X} A(x) \to \bigwedge_{x : X} A(x)$ via a pointed map
$\bigwedge_{x : X} A(x) \to_\pt B$, and that
$\mathsf{BiHom}_X(A,B)$ is equivalent to the type of all such pointed maps
$\bigwedge_{x : X} A(x) \to_\pt B$.
In this way, one can see that $\smile_X \smile_Y$ is given by
the composite of the map $\Pi_{x : X} \Pi_y K_{n(x,y)} \to \bigwedge_{x : X} \bigwedge_{y : Y} K_{n(x,y)}$, given by $f \mapsto \inr\left(x \mapsto \inr(y \mapsto f(x,y))\right)$
with the unique non-trivial map 
$\bigwedge_{x : X} \bigwedge_{y : Y} K_{n(x,y)} \to_\pt K_{\Sigma n}$.
What remains to be shown is that we have a pointed equivalence $e : \bigwedge_{x:X}\bigwedge_{y:Y}A(x,y) \simeq \bigwedge_{y:Y}\bigwedge_{x:X}A(x,y)$ such that the following diagram commutes.
\[
\begin{tikzcd}[ampersand replacement=\&]
	{\Pi_{x:X}\Pi_{y:Y}A(x,y)} \&\& {\Pi_{y:Y}\Pi_{x:X}A(x,y)} \\
	{\bigwedge_{x:X} \bigwedge_{y:Y}A(x,y)} \&\& {\bigwedge_{y:Y} \bigwedge_{x:X}A(x,y)}
	\arrow["{\mathsf{swap}}", from=1-1, to=1-3]
	\arrow[from=1-1, to=2-1]
	\arrow[from=1-3, to=2-3]
	\arrow["e"', from=2-1, to=2-3]
\end{tikzcd}
\]
Again, this turns out to be rather complicated, and we need another simplifying device.
\begin{definition}\label{def:unordered-join} Let $X:\RPinf$ and $A: X \to \UU$. We define the unordered join of $A$, denoted $\bigast_{x:X}A(x)$, by the following pushout.
  \[
\begin{tikzcd}[ampersand replacement=\&]
	{X \times \Pi_{x:X}A(x)} \&\& {\Pi_{x:X}A(x)} \\
	{(x:X)\times A(x)} \&\& {\bigast_{x:X}A(x)}
	\arrow["\snd", from=1-1, to=1-3]
	\arrow["(x{,}f)\mapsto (x{,}f(x))"',from=1-1, to=2-1]
	\arrow[from=1-3, to=2-3]
	\arrow[from=2-1, to=2-3]
	\arrow["\lrcorner"{anchor=center, pos=0.125, rotate=180}, draw=none, from=2-3, to=1-1]
\end{tikzcd}
  \]
\end{definition}
The following lemma says that the unordered join agrees with the usual definition of joins when $X$ is $\two$. Recall that the usual definition is
$A_0 \ast A_1 \coloneqq A_0 \sqcup^{A_0 \times A_1} A_1$.
\begin{lemma}
  Given two types $A_0$ and $A_1$, we have \[\bigast_{x:\two}A_x \simeq A_0 \ast A_1.\]
\end{lemma}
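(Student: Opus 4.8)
The plan is to instantiate the defining pushout of $\bigast_{x:\two}A_x$ at $X \coloneqq \two$ and recognise it, after re-indexing, as a reshuffled version of the span defining $A_0 \ast A_1$. Recall the standard equivalences $\Pi_{x:\two}A_x \simeq A_0 \times A_1$, $(x:\two)\times A_x \simeq A_0 + A_1$, and $\two \times Z \simeq Z + Z$ (the last sending $(0,z)$ to $\inl z$ and $(1,z)$ to $\inr z$). Instantiating \cref{def:unordered-join} at $X \coloneqq \two$ and transporting the three types in the span along these equivalences, a direct calculation shows that the top map $\snd : \two \times \Pi_{x:\two}A_x \to \Pi_{x:\two}A_x$ becomes the fold map $\nabla \coloneqq [\mathsf{id},\mathsf{id}] : (A_0\times A_1) + (A_0\times A_1) \to A_0 \times A_1$, while the left map $(x,f)\mapsto(x,f(x))$ becomes $\iota \coloneqq [\,\inl\circ\fst\,,\ \inr\circ\snd\,] : (A_0\times A_1) + (A_0\times A_1) \to A_0 + A_1$. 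Hence $\bigast_{x:\two}A_x$ is equivalent to the pushout of the span $(A_0 + A_1) \xleftarrow{\iota} (A_0\times A_1) + (A_0\times A_1) \xrightarrow{\nabla} (A_0 \times A_1)$.

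It then remains to identify this pushout with $A_0 \ast A_1$, and for this I would use that pushouts and coequalizers are interdefinable. Write $u \coloneqq \inl\circ\fst$ and $v \coloneqq \inr\circ\snd$ for the two maps $A_0\times A_1 \to A_0 + A_1$, so $\iota = [u,v]$. On one hand, a cocone into an arbitrary type $T$ under the span $(A_0+A_1) \xleftarrow{[u,v]} (A_0\times A_1)+(A_0\times A_1) \xrightarrow{\nabla} A_0\times A_1$ consists of maps $\phi : A_0+A_1 \to T$ and $\psi : A_0 \times A_1 \to T$ together with homotopies exhibiting $\phi \circ u = \psi = \phi \circ v$; the component $\psi$ is thus redundant, so such cocones are the same as maps $\phi : A_0 + A_1 \to T$ equipped with a homotopy $\phi\circ u = \phi\circ v$ — that is, cocones for the coequalizer of $u$ and $v$. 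On the other hand, a cocone into $T$ under the span $A_0 \xleftarrow{\fst} A_0\times A_1 \xrightarrow{\snd} A_1$ consists of $\alpha : A_0 \to T$, $\beta : A_1 \to T$ and a homotopy $\alpha\circ\fst = \beta\circ\snd$, which is precisely a map $[\alpha,\beta] : A_0 + A_1 \to T$ with $[\alpha,\beta]\circ u = [\alpha,\beta]\circ v$ — again a coequalizer cocone. Since both pushouts corepresent the same functor in $T$ as does the coequalizer $\mathsf{coeq}(u,v)$, we conclude $\bigast_{x:\two}A_x \simeq \mathsf{coeq}(u,v) \simeq A_0 \ast A_1$.

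I do not expect a genuine obstacle; the argument is essentially bookkeeping. The one step demanding care is the first paragraph: one must track exactly which maps the two legs of the unordered-join span turn into under the equivalences $\Pi_{x:\two}A_x \simeq A_0\times A_1$ and $(x:\two)\times A_x \simeq A_0+A_1$, and in particular confirm that the left leg becomes $[\,\inl\circ\fst\,,\ \inr\circ\snd\,]$ rather than some transposed variant. A more hands-on alternative, should the universal-property manipulation feel unsatisfying, is to write down the two maps $\bigast_{x:\two}A_x \to A_0 \ast A_1$ and $A_0 \ast A_1 \to \bigast_{x:\two}A_x$ explicitly via the pushout recursion principles and check that they are mutually inverse by pushout induction; this is routine but less conceptual.
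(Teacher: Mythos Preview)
Your argument is correct. The paper, however, takes a different route: it invokes the $3\times 3$ lemma on the grid
\[
\begin{array}{ccccc}
A_0 & \leftarrow & \varnothing & \rightarrow & A_1 \\
\uparrow && \uparrow && \uparrow \\
A_0\times A_1 & \leftarrow & \varnothing & \rightarrow & A_0\times A_1 \\
\downarrow && \downarrow && \downarrow \\
A_0\times A_1 & \leftarrow & A_0\times A_1 & \rightarrow & A_0\times A_1
\end{array}
\]
Pushing out the rows first yields exactly the span you identified, $(A_0+A_1)\leftarrow (A_0\times A_1)+(A_0\times A_1)\rightarrow A_0\times A_1$, while pushing out the columns first yields the standard join span $A_0\leftarrow A_0\times A_1\rightarrow A_1$; the $3\times 3$ lemma then identifies the two iterated pushouts. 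Your approach is arguably more elementary---it does not rely on the $3\times 3$ lemma as a black box and instead exposes the common coequaliser directly---while the paper's approach is more uniform and avoids the careful bookkeeping you flag in your first paragraph. Both are perfectly fine for this lemma.
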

\begin{proof}
By the $3 \times 3$ lemma applied to the following diagram.
\[\begin{tikzcd}[ampersand replacement=\&,cramped]
	{A_0} \& \varnothing \& {A_1} \\
	{A_0 \times A_1} \& \varnothing \& {A_0 \times A_1} \\
	{A_0 \times A_1} \& {A_0 \times A_1} \& {A_0 \times A_1}
	\arrow[from=1-2, to=1-1]
	\arrow[from=1-2, to=1-3]
	\arrow["{\mathsf{fst}}"', from=2-1, to=1-1]
	\arrow["{\mathsf{id}}", from=2-1, to=3-1]
	\arrow[from=2-2, to=1-2]
	\arrow[from=2-2, to=2-1]
	\arrow[from=2-2, to=2-3]
	\arrow[from=2-2, to=3-2]
	\arrow["{\mathsf{snd}}", from=2-3, to=1-3]
	\arrow["{\mathsf{id}}"', from=2-3, to=3-3]
	\arrow["{\mathsf{id}}"', from=3-2, to=3-1]
	\arrow["{\mathsf{id}}", from=3-2, to=3-3]
\end{tikzcd}\]
\end{proof}
The unordered join is relevant for us because $\mathsf{isBiHom}(f)$ is easily seen to be equivalent
to 
\[(a : \Pi_{x : X} A(x)) \to \bigast_{x : X} (a(x) = \pt) \to (f(a) = \pt) .\]
An equivalent way to phrase this (which we will not use)
is that $\bigwedge_{x : X} A(x)$ is the cofibre of the projection
\[(a : \Pi_{x : X} A(x)) \times \bigast_{x : X} (a(x) = \pt) \to
\Pi_{x : X} A(x)\] onto the first component; this map might be called an \emph{unordered wedge inclusion}.

Given this characterisation of $\mathsf{isBiHom}(f)$ in terms of the unordered join, the
proof of \cref{thm:cup-fubini} will eventually reduce to the following lemma.
\begin{lemma}\label{lem:fubini}
  For any $X,Y:\RPinf$ and $A:X\times Y \to \UU$, we have a function
  \[
  \bigast_{x:X}\bigast_{y:Y} A(x,y) \to \bigast_{y:Y}\bigast_{x:X} A(x,y)
  \]
\end{lemma}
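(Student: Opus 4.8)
The plan is to construct the map $\bigast_{x:X}\bigast_{y:Y}A(x,y)\to\bigast_{y:Y}\bigast_{x:X}A(x,y)$ by the universal property of the pushout defining the outer join $\bigast_{x:X}(\bigast_{y:Y}A(x,y))$. Recall that this pushout is generated by the span
\[
\left(x:X\right)\times\Bigl(\textstyle\bigast_{y:Y}A(x,y)\Bigr)\;\longleftarrow\;X\times\prod_{x:X}\bigast_{y:Y}A(x,y)\;\longrightarrow\;\prod_{x:X}\bigast_{y:Y}A(x,y),
\]
so to define a map out of it into $T\coloneqq\bigast_{y:Y}\bigast_{x:X}A(x,y)$ I must supply (i) a map $h_\ell:(x:X)\times\bigast_{y:Y}A(x,y)\to T$, (ii) a map $h_r:\prod_{x:X}\bigast_{y:Y}A(x,y)\to T$, and (iii) for every $(x,g)$ with $g:\prod_{x:X}\bigast_{y:Y}A(x,y)$, an identification $h_\ell(x,g(x))=h_r(g)$. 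The symmetric target $T$ is itself a pushout of the span $(y:Y)\times\bigast_{x:X}A(x,y)\leftarrow Y\times\prod_{y:Y}\bigast_{x:X}A(x,y)\rightarrow\prod_{y:Y}\bigast_{x:X}A(x,y)$, so whenever I need to produce points of $T$ I have $\mathsf{inl}$, $\mathsf{inr}$ and $\mathsf{push}$ at my disposal.

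The key difficulty, and the reason this is phrased as a bare map rather than an equivalence, is that the data $h_\ell$ and $h_r$ are themselves maps out of unordered joins and must be defined by a nested, mutually-referencing pushout recursion — essentially a $3\times3$-style argument performed \emph{synthetically} over $\RPinf$. Concretely: to define $h_r$ I take $g:\prod_{x:X}\bigast_{y:Y}A(x,y)$ and must land in $T=\bigast_{y:Y}\bigast_{x:X}A(x,y)$; the natural move is $\mathsf{inr}$ applied to the function $y\mapsto(\text{something in }\bigast_{x:X}A(x,y))$, but that `something' requires knowing, for each $y$, a coherent choice built from all the $g(x)$, which themselves are points of joins indexed by $Y$. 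To define $h_\ell$ I take $x:X$ and a point of $\bigast_{y:Y}A(x,y)$, and must do a further pushout recursion on that inner join: on the $\mathsf{inl}$-part I get $(y,a)$ with $a:A(x,y)$, which I send into $T$ via $\mathsf{inl}$ of the pair $(y,\text{point of }\bigast_{x:X}A(x,y))$ — and here I use the constructor $\mathsf{inl}:(x:X)\times A(x,y)\to\bigast_{x:X}A(x,y)$ on $(x,a)$; on the $\mathsf{inr}$-part I get $b:\prod_{y:Y}A(x,y)$, which I send to $\mathsf{inr}$ of $y\mapsto\mathsf{inl}(x,b(y))$; and the inner $\mathsf{push}$ must be matched by an appropriate $\mathsf{push}$ (or a composite of $\mathsf{push}$'s) in $T$. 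All of this should be organised so that the structure of the argument does not depend on choosing enumerations of $X$ or $Y$; one sets up predicates/types over $\RPinf$ and invokes \cref{lem:RPinf-elim} and \cref{lem:RPinf-ind} to reduce the coherence obligations to the case $X=Y=\two$, where the construction is the ordinary join-Fubini map.

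Thus the steps, in order, are: (1) unfold both sides as iterated pushouts and fix the cocone data to be produced; (2) define the `inner' ingredient — for fixed $x$, a map $\bigast_{y:Y}A(x,y)\to\bigast_{y:Y}\bigast_{x:X}A(x,y)$ built by pushout recursion using the inclusions $\mathsf{inl}(x,-)$ and $y\mapsto\mathsf{inr}(\dots)$ — and check it respects the inner $\mathsf{push}$; (3) assemble $h_\ell$ from this and define $h_r$ via $\mathsf{inr}$; (4) supply the outer glue (iii), which is the point where a genuine two-dimensional path identity must be checked; (5) phrase (2)–(4) uniformly over $\RPinf$ so that the only content is the $\two\times\two$ case, which is handled by (a mild variant of) the classical join rearrangement, cf.\ the $3\times3$ lemma as already used in the preceding lemma. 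The main obstacle I expect is step (4) together with the coherence in step (5): getting the outer $\mathsf{push}$-glue to typecheck requires that the inner recursion of step (2) was set up with exactly the right image of $\mathsf{push}$, and verifying this compatibility — a path between paths in a doubly-iterated pushout — is precisely the kind of bookkeeping that makes \cref{thm:cup-fubini} ``far more difficult than anything we have done so far''. (One then separately argues this map is an equivalence by exhibiting the symmetric map in the other direction and checking the round-trips, but the lemma as stated asks only for the map, so I stop here.)
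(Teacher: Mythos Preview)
Your decomposition into $h_\ell$, $h_r$, and the outer glue is the same starting point as the paper's $\mathcal{F}_r$, $\mathcal{F}_l$, $\mathcal{F}_{lr}$ (with the names swapped), and you correctly put your finger on the crux: defining the map on the $\Pi$-leg, i.e.\ your $h_r : \prod_{x:X}\bigast_{y:Y}A(x,y) \to T$. But there is a genuine gap in how you propose to discharge it.

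Step (3) says ``define $h_r$ via $\mathsf{inr}$'', and step (5) proposes to ``phrase (2)--(4) uniformly over $\RPinf$ so that the only content is the $\two\times\two$ case'' using \cref{lem:RPinf-elim}. Neither works. For $\mathsf{inr}$ to apply you would need, from $g:\prod_{x:X}\bigast_{y:Y}A(x,y)$, to produce for each $y:Y$ an element of $\bigast_{x:X}A(x,y)$; but $g(x)$ is a point of a join over $Y$, not a value at any particular $y$, so there is nothing canonical to feed into $\mathsf{inr}$. And \cref{lem:RPinf-elim} only applies when the family is proposition- or set-valued; the type of functions between iterated unordered joins is neither, so you cannot reduce the \emph{construction} of $h_r$ to the case $X=Y=\two$ in this way. (When you have $x:X$ in context, as for $h_\ell$ and the glue, you may indeed invoke \cref{lem:RPinf-ind}; for $h_r$ you have no such point.) The paper even remarks that the obvious generalisation to arbitrary $X,Y$ fails, so something specific to two-element types must enter, but not via a blanket truncation argument.

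The paper's missing ingredient is an explicit elimination principle for the \emph{domain} of $h_r$. It shows (\cref{lem:HIT}) that for any family of relational pushouts $P(B(x),C(x),R(x))$, the type $\prod_{x:X}P(B(x),C(x),R(x))$ is equivalent to a concrete seven-constructor HIT $T$; the proof that the comparison map is an equivalence \emph{is} a proposition, so \cref{lem:RPinf-elim} applies there. Instantiated with $B(x)=(y:Y)\times A(x,y)$, $C(x)=\prod_{y:Y}A(x,y)$, this gives a HIT presentation of $\prod_{x:X}\bigast_{y:Y}A(x,y)$, and $h_r$ is then defined by pattern matching on that HIT. Even then, one constructor has domain $\prod_{x:X}((y:Y)\times A(x,y))$, and to case-split on \emph{that} the paper needs a further lemma (\cref{lem:fun-charac}) that every map $X\to Y$ between two-element types is either an equivalence or constant. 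These two devices are the ideas your outline is missing; without them the plan does not go through.
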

Interestingly, we do not need to ask for any properties of this function, although
it is important that we \emph{construct} a function as opposed to proving its mere
existence.
Since the proof of \cref{lem:fubini} is rather technical, we postpone it for the moment
and turn to the main result promised at the beginning of this section.
\begin{proof}[Proof of \cref{thm:cup-fubini}]
Suppose we are given $X, Y : \RPinf$, $n : X \times Y \to \bN$, fixed
throughout the proof; we would like to show that $\smile_X \smile_Y = \smile_Y \smile_X$.
Given $f : \Pi_{x : X} \Pi_{y : Y} K_{n(x,y)}$, we claim that
we have a map
\begin{equation}\label{eq:blabla}
\bigast_{x : X} \bigast_{y : Y} (f(x,y) = 0) \to (\smile_{x : X} \smile_{y : Y} f(x,y) = 0)
.
\end{equation}
Indeed, we have a map
$\bigast_{x : X} \bigast_{y : Y} (f(x,y) = 0) \to \bigast_{x : X} \left(\smile_{y : Y} f(x,y) = 0\right)$
by functoriality of the unordered join together with the fact that $\smile_Y$ is a bihom,
   and a map
$(\bigast_{x : X} (\smile_{y : Y} f(x,y) = 0)) \to 
(\smile_{x : X} \smile_{y : Y} f(x,y) = 0)$ since $\smile_X$ is a bihom.

Now let $T$ be the sigma-type consisting of functions
$\mu : \Pi_{x : X} \Pi_{y : Y} K_{n(x,y)} \to K_{\Sigma n}$ together with a proof
that for all $f : \Pi_{x : X} \Pi_{y : Y} K_{n(x,y)}$,
	 we have $\bigast_{x : X} \bigast_{y : Y} f(x,y) = \pt \to \mu(f) = \pt$.
We can construct two elements of $T$: on the one hand we have $\smile_X \smile_Y$ together
with the argument above that we have a map as in \cref{eq:blabla}.
By symmetry and using \cref{lem:fubini}, we can similarly construct an element of $T$ 
whose first component is $\smile_Y \smile_X$.
Now we claim that these two elements of $T$ are equal; in fact, we claim that there
is a unique identification between them. This will finish the proof, since
if two pairs are equal then so are their first components.

Since our claim is now a proposition, we may assume that $X$ and $Y$ are both
$\two$. In this case,
	we have that $T$ is the set of pointed maps
	\[K_{n(0,0)} \wedge K_{n(0,1)} \wedge K_{n(1,0)} \wedge K_{n(1,1)} \to_\pt K_{\Sigma n} .\]
	By a version of `Cavallo's trick' \cite[Lemma 15]{Ljungstrom2024}, two pointed maps out of a smash product into a homogeneous type
	are equal if they are equal when restricted to the product,
	in this case $\Pi_{i,j\in \{0,1\}} K_{n(i,j)}$.
	Our two elements of $T$ correspond to the maps
	\[(a_{00},a_{01},a_{10}, a_{11}) \mapsto (a_{00} \smile a_{01}) \smile (a_{10} \smile a_{11})\]
	and
	\[(a_{00},a_{01},a_{10}, a_{11}) \mapsto (a_{00} \smile a_{10}) \smile (a_{01} \smile a_{11}) .\]
	Indeed these are equal by commutativity and associativity of the cup product.
	This concludes the proof.
\end{proof}
\subsection{Proving~\cref{lem:fubini}}
\todo{I'm just babbling here. Please rewrite.}
\todo{Här (eller direkt efter) hade en diskussion passat bra}

No result used in this project has turned out to be quite as problematic
as~\cref{lem:fubini} -- the result is highly technical and its computer
formalisation was only completed after a year's worth of failed attempts. While
we would be very happy to see a more conceptual proof of this statement, we are
sceptical to the existence of such a proof. To illustrate why, note that the
pushout describing the unordered join $\bigast_{x:X}A(x)$ in no way requires
$X$ to be a two-element type -- the definition makes sense for arbitrary $X$,
although in this case we might write it with an apostrophe $\bigast_{}'$
to remind ourselves that it is no longer a good generalisation of the usual join.
Thus, we may ask whether it is true, for \emph{any} two types $X$ and $Y$ and
family $A : X\times Y \to \UU$ whether the map

\[\mathcal{F} : \bigast_{x:X}{\!'}\bigast_{y:Y}{\!'}A(x,y) \to \bigast_{y:Y}{\!'}\bigast_{x:X}{\!'}A(x,y)\]
exists. This seems difficult (if not impossible) to do in general, as we cannot
prove in general that the domain and codomain are equivalent. For instance, if
we set $X=\two$ and $Y = \mathsf{hProp}$, $A(0,P)\coloneqq  \neg P$ and
$A(1,P) := P$, then the LHS becomes contractible whereas the RHS is equivalent
to the suspension of LEM -- a type whose contractibility is
independent of HoTT.


So, let us try to define $\mathcal{F}$ for $X,Y:\RPinf$. The function will be described by the following data:
\begin{align*}
  \mathcal{F}_l &: \left(\prod_{x:X}\bigast_{y:Y}A(x,y)\right) \to \bigast_{y:Y}\bigast_{x:X}A(x,y)\\
  \mathcal{F}_r &: (x:X)\times \bigast_{y:Y}A(x,y) \to \bigast_{y:Y}\bigast_{x:X}A(x,y)\\
  \mathcal{F}_{lr} &: (x : X)\left(f : \prod_{x:X}\bigast_{y:Y}A(x,y)\right) \to F_l(f) = F_r(x,f(x))
\end{align*}
The key problem here is defining $F_l$ -- its codomain is a $\Pi$-type and thus does not automatically come equipped with an elimination rule. Consequently, we need to understand the type $\prod_{x:X}\bigast_{y:Y}A(x,y)$. Luckily, it turns out that we can describe this $\Pi$-type using a rather involved construction. To give it a (somewhat) more concise definition, let us define, for any two types $B$ and $C$, and family $R : B \to C \to \UU$, the \emph{relational} pushout, $P(B,C,R)$ to simply be the pushout of the span $B \leftarrow (b:B)\times(c:C)\times R(b,c) \rightarrow C$. Any pushout $B \xrightarrow{f} D \xrightarrow{g} C$ can be written as a relational pushout by setting $R(b,c) \coloneqq  (d:D) \times (f(d) = b) \times (g(d) = c)$ (and vice versa).

\begin{lemma}\label{lem:HIT}
Let $X : \RPinf$, $B, C : X \to \UU$, 
and $R : B(x) \times C(x) \to \UU$ (with $x:X$ an implicit argument).
Then $\prod_{x:X}P(B(x), C(x), R(x))$
is equivalent to the HIT $T$ with the following constructors.
\begin{itemize}
\item[\small{$bb:$}] {\small $(\prod_{x : X} B(x)) \to T$}
\item[\small{$cc:$}] {\small $(\prod_{x : X} C(x)) \to T$}
\item[\small{$bc:$}] {\small $(x : X) \times B(x) \times C(\neg x) \to T$}
\item[\small{$rr:$}] {\small $(b : \prod_{x : X} B(x)) (c : \prod_{x : X} C(x))
		(r : \prod_{x : X}  R(b(x),c(x))) \to bb(b) = cc(c)$}
\item[\small{$br:$}] {\small $(x : X)  (b : \prod_{k : X} B(k)) (c : C(\neg x)) (r: R(b(\neg x),c)) \hspace{.3cm} \to
	bb(a) = bc(b(x),c)$}
\item[\small{$cr:$}] {\small $(x : X) (b : B(x)) (c : \prod_{k : X} C(k)) (r : R(b,c(x))) \to
	bc(b,c(\neg x)) = cc(c)$}
\item[\small{$rr':$}] {\small $(b : \prod_{x : X} B(x))\, (c : \prod_{x : X} C(x)) \hspace{3cm} \to 
  (r : \prod_{x : X}  R(b(x),c(x)))\,(x : X) \hspace{3cm} \to rr(b,c,r) \!=\! br(x,\neg x,b,c(\neg x),r(\neg x))\hspace{-.3mm} \cdot \hspace{-.3mm} cr(x,\neg x,b(x),c,r(x))$
}
\end{itemize}
\end{lemma}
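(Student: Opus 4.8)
The plan is to construct a comparison map $\phi_X : T \to \prod_{x:X} P(B(x),C(x),R(x))$ uniformly in $X : \RPinf$, and then to show that it is an equivalence. Defining a map \emph{out of} $T$ is easy, since $T$ is presented as a HIT with a recursion principle, whereas a map out of the $\Pi$-type has no obvious recursion principle — this is precisely the difficulty the lemma resolves. Once $\phi_X$ is in place, the proposition ``$\phi_X$ is an equivalence'' depends on $X : \RPinf$ and is proposition-valued, so by \cref{lem:RPinf-elim}{\ref{lem:RPinf-elim-prop}} it suffices to verify it when $X$ is $\two$, where the $\Pi$-type degenerates to the honest binary product $P(B_0,C_0,R_0) \times P(B_1,C_1,R_1)$.

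To define $\phi_X$ I would use the recursion principle of $T$, constructor by constructor. The point constructors $bb$ and $cc$ go to $\lambda x.\,\inl(b(x))$ and $\lambda x.\,\inr(c(x))$. For $bc(x,\beta,\gamma)$ we use the pairing principle of \cref{lem:RPinf-ind} (in the form stated just afterwards) applied to the family $x \mapsto P(B(x),C(x),R(x))$: the pair $(\inl\,\beta,\inr\,\gamma)$ determines a dependent function agreeing with $\inl\,\beta$ at $x$ and $\inr\,\gamma$ at $\neg x$. The path constructors $rr$, $br$, $cr$ are sent to paths in the $\Pi$-type obtained by function extensionality from pointwise paths, again via the same pairing principle applied to the relevant identity types (a $\push$ of the supplied witness in the ``moving'' coordinate and reflexivity in the other). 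Finally the $2$-cell $rr'$ is forced by the coherence of function extensionality with the pairing eliminator — in the $\Pi$-type, the square witnessing that a diagonal path factors through the two intermediate ``mixed'' functions is degenerate in each coordinate. This last case is the most delicate: one must keep explicit track of the involution $\neg$ from \cref{lem:PRinf-invol} and of the naturality of the pairing eliminator, rather than abstracting these coherences away.

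For the $X = \two$ case, write $P_i \coloneqq P(B_i,C_i,R_i) = B_i \sqcup^{D_i} C_i$ with $D_i \coloneqq (b:B_i)\times(c:C_i)\times R_i(b,c)$. I would compute $P_0 \times P_1$ by applying the flattening lemma twice (equivalently, using that products distribute over pushouts): presenting $P_0 \times P_1$ as $\Sigma_{p:P_0} P_1$ and flattening over $P_0$ exhibits it as the pushout of $(B_0 \times P_1) \leftarrow (D_0 \times P_1) \rightarrow (C_0 \times P_1)$, and flattening each of these over $P_1$ exhibits $P_0 \times P_1$ as the colimit of the $3\times 3$ diagram whose nine vertices are the products of the pieces $B_i, D_i, C_i$. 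On the other hand, specialising the constructors of $T$ to $X = \two$ (so $\neg$ is the swap) yields exactly this iterated pushout: $bb$ and $cc$ give the two ``pure'' corners $B_0\times B_1$ and $C_0\times C_1$; $bc$ splits into the two ``mixed'' corners $B_0\times C_1$ and $B_1\times C_0$; $br$ and $cr$ give the edge paths; and $rr$ together with $rr'$ gives the centre $D_0\times D_1$. The $3\times3$ lemma (used already for $\bigast_{x:\two}A_x \simeq A_0 \ast A_1$) then gives $T_{\two} \simeq P_0 \times P_1$, and one checks that this equivalence agrees with $\phi_{\two}$ on the generators of $T$.

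I expect the main obstacle to be this last $X = \two$ matching: the iterated-pushout presentation of $P_0 \times P_1$ has a $3\times3$ diagram's worth of cells that must be identified one by one with the constructors of $T$, and matching the centre with $rr$ and $rr'$ hinges on the transport behaviour of flattening along path constructors. A secondary source of friction is the $rr'$ case in the definition of $\phi_X$, where the $\RPinf$-coherences must be handled by hand; this is the point at which the Cubical Agda formalisation is most technical.
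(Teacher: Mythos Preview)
Your proposal matches the paper's proof sketch exactly: define a comparison map $T \to \prod_{x:X} P(B(x),C(x),R(x))$ by $T$-induction, then use \cref{lem:RPinf-elim}\ref{lem:RPinf-elim-prop} to reduce the equivalence check to the case $X=\two$. The paper gives no further detail on the $X=\two$ step beyond calling it ``somewhat technical'', so your flattening/$3\times 3$ analysis is a reasonable elaboration rather than a departure.
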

\normalsize
\begin{proof}[Proof sketch]
  It is straightforward to define a map $w_X : T \to \Pi_{x:X} P(B(x),C(x),R(x))$ by $T$-induction. By \cref{lem:RPinf-elim}\ref{lem:RPinf-elim-prop}, it is sufficient to show that $w_{X}$ is an equivalence when $X=\two$. This is somewhat technical but can be done with relative ease.
\end{proof}
If we instantiate the above with $B(x) \coloneqq  (y:Y)\times A(x,y)$, $C(x):= \prod_{y:Y}A(x,y)$ and $R((y,a),f) := (f(y) = a)$, we have $P(B(x),C(x),R(x)) \simeq \bigast_{y:Y}A(x,y)$ and thus \cref{lem:HIT} tells us that there is an equivalence $w : T \simeq \prod_{x:X}\bigast_{y:Y}A(x,y)$. Hence, in order to construct $\mathcal{F}_l$, it is enough to define a map $T \to \bigast_{y:Y}\bigast_{x:X}A(x,y)$.

Mapping out of $T$ is, in general, not much easier than mapping out of $\prod_{x:X} \bigast_{y:Y}A(x,y)$:  a map out of $T$ must be defined over $\Pi_{x:X}B(x)$ and $\Pi_{x:X}C(x)$ which again forces us to map out of $\Pi$-types. The type $\Pi_{x:X}C(x)$ is unproblematic: it is simply $\Pi_{x:X}\Pi_{y:Y} A(x,y)$ and defines an element of $\bigast_{y:Y}\bigast_{x:X}A(x,y)$ by simply swapping the arguments. However, $\prod_{x:X}B(x) \coloneqq  \Pi_{x:X}((y:Y)\times A(x,y))$ is more complicated -- where we send an element $f$ of this type depends on the behaviour of $\fst \circ f : X \to Y$. Fortunately, we can understand this type.
\begin{lemma}\label{lem:fun-charac}
  For any $X,Y:\RPinf$, the map $((X\simeq Y) + Y) \to (X\to Y)$ sending equivalences to their underlying functions and $y:Y$ to the constant map $\lambda\, x \,.\,y$ is an equivalence.
\end{lemma}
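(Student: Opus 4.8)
The plan is to reduce to the case $X = Y = \two$ by $\RPinf$-induction and then settle the resulting statement by a finite case analysis. Write $\phi_{X,Y} : ((X \simeq Y) + Y) \to (X \to Y)$ for the map in the statement, so that $\phi_{X,Y}(\inl(e))$ is the underlying function of $e$ and $\phi_{X,Y}(\inr(y)) = \lambda x.\,y$. Since ``$\phi_{X,Y}$ is an equivalence'' is a proposition for each pair $(X,Y)$, the family $X \mapsto \prod_{Y : \RPinf}(\phi_{X,Y}\text{ is an equivalence})$ is proposition-valued, so by \cref{lem:RPinf-elim}\ref{lem:RPinf-elim-prop} it suffices to treat $X = \two$; a second application of the same lemma, now in the variable $Y$, reduces us to proving that $\phi_{\two,\two} : ((\two \simeq \two) + \two) \to (\two \to \two)$ is an equivalence.

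For this, note first that both the domain and codomain are sets: $\two \to \two$ is a set because $\two$ is, and $\two \simeq \two$ is a subtype of $\two \to \two$ since $\mathsf{isEquiv}$ is propositional. It therefore suffices to build a two-sided inverse $g$, the two inverse laws being propositions verifiable by case analysis. Define $g$ on $f : \two \to \two$ by splitting on the pair $(f(0), f(1))$: when $f(0) = f(1)$, function extensionality (using that every element of $\two$ is $0$ or $1$) shows $f$ is the constant map at $f(0)$, and we set $g(f) \coloneqq \inr(f(0))$; when $(f(0),f(1)) = (0,1)$ we have $f = \mathsf{id}$ and set $g(f) \coloneqq \inl(\mathsf{id})$; when $(f(0),f(1)) = (1,0)$ we have $f = \neg$ and set $g(f) \coloneqq \inl(\neg)$. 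That $\phi_{\two,\two} \circ g$ is the identity is immediate from these identifications of $f$ with its canonical representative. For $g \circ \phi_{\two,\two}$, the case $\inr(y)$ is immediate, and for $\inl(e)$ we use that $e : \two \simeq \two$ is injective and that $\two$ has exactly two elements to conclude $e(0) \ne e(1)$, hence $e$ agrees as a plain function with $\mathsf{id}$ or $\neg$; since $\mathsf{isEquiv}$ is propositional, this upgrades to an equality in $\two \simeq \two$, giving $g(\phi_{\two,\two}(\inl(e))) = \inl(e)$.

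There is no deep difficulty here: after the $\RPinf$-reduction the statement is a finite check. The only points requiring a little care are the two uses of the fact that $\mathsf{isEquiv}$ is propositional -- once to see that $(\two \simeq \two) + \two$ is a set, and once to promote equalities of functions $\two \to \two$ to equalities of self-equivalences of $\two$ -- together with keeping track of the four cases in both inverse laws.
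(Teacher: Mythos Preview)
Your proof is correct and follows essentially the same approach as the paper: reduce to $X=Y=\two$ using \cref{lem:RPinf-elim}\ref{lem:RPinf-elim-prop} (since being an equivalence is a proposition), and then verify the claim by a finite case analysis on functions $\two\to\two$. The paper simply records this last step as the ``trivial observation that any function $\two\to\two$ is either an equivalence or constant,'' whereas you spell out the inverse explicitly.
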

\begin{proof}
  Since the statement is a proposition, it suffices to show it when $X=Y=\two$. In this case, the statement is simply the trivial observation that any function $\two \to \two$ is either an equivalence or constant.
\end{proof}
Using \cref{lem:fun-charac}, we can replace each occurrence of $\Pi_{x:X}((y:Y)\times A(x,y))$ in $T$ with the equivalent type
  \[ \left((e : X \simeq Y)\times A(x,e(x))\right) + \left((y : Y) \times A(x,y)\right) \]
which has a more well-behaved elimination principle. After this rewriting, it is possible to define, by pattern matching,  a map $\psi_{X,Y} : {T} \to \bigast_{y:Y} \bigast_{x:X}A(x,y)$, which allows us to define $\mathcal{F}_l = \psi_{X,Y} \circ \phi$, where $\phi$ is the appropriate instance of \cref{lem:HIT}. We then need to define, for each $x:X$, the function $\mathcal{F}_r(x,-)$ and the homotopy $\mathcal{F}_{lr}(x,-)$. In theory, this is somewhat easier: as we have $x:X$ in context, we may apply~\cref{lem:RPinf-ind}. In practice, we have to deal with a large number of difficult coherence problems which we completely sweep under the rug here.

\section{Joins and $E_\infty$-monoids}
\label{sec:Einf}
Much progress in synthetic homotopy type theory is held back by what is known
as the \emph{problem of infinite objects}~\cite{Buchholtz19}. This problem
appears in many guises, and is traditionally explained in terms of
semi-simplicial types. In this work we brush against instances of this
problem in many places where we would like to talk about higher commutative, more
precisely $E_\infty$-, monoids.

The idea of $E_\infty$-monoids%
\footnote{Usually referred to by the less descriptive term $E_\infty$-\emph{spaces}.}
is remarkably natural from a type-theoretic perspective.
An $E_\infty$-monoid should consist of a type $A$ and for \emph{any finite type} $X$
a `multiplication' map
\[\mu_X : A^X \to A .\]
These multiplication maps express that any finite collection of elements of $A$ can be multiplied,
and that their order is irrelevant in a homotopy coherent sense.
The unary multiplication $\mu_1 : A^1 \to A$ should be the canonical equivalence.
These multiplication maps are subject to certain coherences, starting with the following:
given a finite type $X : \UU$ and a family of finite types $Y : \UU$ indexed by $X$,
the two maps $A^{(x : X) \times Y(x)} \to A$ given respectively by
$\mu_{(x : X) \times Y(x)}$ and 
\[a \mapsto \mu_X(x \mapsto \mu_Y(y \mapsto a(x,y)))\]
are identified. This expresses a kind of generalised associativity.

This is not a complete definition of $E_\infty$-monoids -- it
is missing an infinite tower of higher coherences -- but we can get far with just the data
above.

For example, we would like to know that the universe of types $\UU$ forms an
$E_\infty$-monoid where the multiplication is given by unordered join of types.
The unordered binary join would be a special case, and the Fubini map (which really should be an
equivalence) of \cref{lem:fubini} -- our main technical result -- would
be a consequence of generalised associativity, since $\mu_{X \times Y}$ and
$\mu_{Y \times X}$ are related by the path $X \times Y = Y \times X$ obtained
from univalence. In this way, the technical burden of this paper would be much
lighter if we simply had access to this $E_\infty$-monoid structure!

The problems associated with this appear at three different levels. At the first level, we
cannot even define the whole infinite tower of $E_\infty$-coherences in type
theory. This is a well-known instance of the problem of infinite objects, but is
not so relevant for us.
At the second level, it is not clear how to define the unordered join of a general
finite family of types. We have seen how to do it given a finite family of size 2,
and it is clear how to do it for 3 and 4, but the complexity grows very quickly,
and we run into the problem of infinite objects in trying to define a general pattern.
At the third level, consider what happens when we try to reason about unordered
joins of just a few spaces -- for example as in \cref{lem:fubini}. At this
level, our problems are finitary and in principle surmountable. But they are
also remarkably difficult since we lack a systematic way to approach these
problems. This story can be compared with that of coherences for the
smash product~\cite{Ljungstrom2024}.

\section{Steenrod squares and $\pi_4(\sphere{3})$}
\label{sec:applications}
\todo{more? remove completely?}
The Steenrod squares are not only an esoteric construction: they have several crucial applications in algebraic topology. One typical example is the computation of $\pi_4(\sphere{3})$, the $4$th homotopy group of the $3$-sphere, i.e.\ $\truncT{0}{\sphere{4}\to_\pt \sphere{3}}$. Although the fact that $\pi_4(\sphere{3})\cong \bZtwo$ is well known in HoTT, due to Brunerie~\cite{Brunerie16}, it was shown by Ljungström and Mörtberg~\cite{pi4-extended} that this can be shown in a very direct way under the assumption that $\pi_4(\sphere{3})$ is non-trivial. We can now complete this proof by giving a new (in HoTT) argument for why $\pi_4(\sphere{3})$ does not vanish. Let $h : \sphere{3} \to \sphere{2}$ be the \emph{Hopf map}, i.e.\ the generator of $\pi_3(\sphere{2})$~\todo{ref}. If we can show that its suspension $\Susp{h} : \sphere{4} \to \sphere{3}$ is non-trivial, we are done. In HoTT, we define $\CP \coloneq  C_h$ to be the cofibre of $h$. Since suspensions commute with cofibres, we get $C_{\Susp{h}} \simeq \Susp{\CP}$. On the other hand, the cofibre of the constant pointed map $\mathsf{const} : \sphere{4} \to \sphere{3}$ is equivalent to $\sphere{5} \vee \sphere{3}$, i.e.\ the pushout of the span $\sphere{5} \leftarrow \bOne \rightarrow \sphere{3}$. Thus, we are done if we can show the following.
\begin{theorem}
  $\Susp{\CP} \not \simeq \sphere{5} \vee \sphere{3}$
\end{theorem}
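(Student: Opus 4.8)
The plan is to distinguish the two spaces by the mod-$2$ Steenrod operation $\sq{2}$. By \cref{thm:main} this is a natural cohomology operation: for every type $Z$ we get a homomorphism $\sq{2} : H^k(Z,\bZtwo) \to H^{k+2}(Z,\bZtwo)$ (postcomposition with $\sq{2} : K_k \to_\pt K_{k+2}$), and these maps are natural in $Z$. I will show that $\sq{2}$ is an isomorphism $H^3(\Susp{\CP},\bZtwo) \xrightarrow{\sim} H^5(\Susp{\CP},\bZtwo)$ between groups isomorphic to $\bZtwo$, whereas $\sq{2} : H^3(\sphere{5}\vee\sphere{3},\bZtwo) \to H^5(\sphere{5}\vee\sphere{3},\bZtwo)$ is the zero map. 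Given any equivalence $\phi : \Susp{\CP} \simeq \sphere{5}\vee\sphere{3}$, precomposition induces isomorphisms $\phi^\ast : H^k(\sphere{5}\vee\sphere{3},\bZtwo) \xrightarrow{\sim} H^k(\Susp{\CP},\bZtwo)$ which, by naturality of $\sq{2}$, fit into a commuting square; this would force a zero map $\bZtwo \to \bZtwo$ to be an isomorphism, which is absurd.

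For the first claim I would use the stability axiom~\ref{ax:susp}: under the suspension isomorphisms in cohomology, $\sq{2} : H^{k+1}(\Susp{Z},\bZtwo) \to H^{k+3}(\Susp{Z},\bZtwo)$ corresponds to $\sq{2} : H^k(Z,\bZtwo) \to H^{k+2}(Z,\bZtwo)$, so it suffices to show $\sq{2} : H^2(\CP,\bZtwo) \to H^4(\CP,\bZtwo)$ is an isomorphism. By axiom~\ref{ax3} this map is $x \mapsto x \smile x$. Since $\CP \coloneqq C_h$, a routine Mayer--Vietoris computation for the defining pushout $\bOne \leftarrow \sphere{3} \xrightarrow{h} \sphere{2}$ (all the relevant restriction maps $h^\ast$ vanishing for degree reasons) gives $H^k(\CP,G) \cong G$ for $k \in \{0,2,4\}$ and $0$ otherwise, for any coefficient group $G$; in particular $H^2(\CP,\bZtwo) \cong H^4(\CP,\bZtwo) \cong \bZtwo$. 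It remains to see that a generator of $H^2(\CP,\bZtwo)$ cup-squares to a generator of $H^4(\CP,\bZtwo)$; this is exactly the cup-product formulation of $h$ having Hopf invariant one, which is available in HoTT from the analysis of $\pi_3(\sphere{2})$, and it descends to $\bZtwo$-coefficients because cup products are natural in the coefficient ring and the reduction maps $H^2(\CP,\bZ) \to H^2(\CP,\bZtwo)$, $H^4(\CP,\bZ) \to H^4(\CP,\bZtwo)$ carry generators to generators. Hence $\sq{2}$ is an isomorphism $\bZtwo \to \bZtwo$ on $\CP$, and therefore also on $\Susp{\CP}$.

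For the second claim, Mayer--Vietoris for the wedge pushout $\sphere{5} \leftarrow \bOne \rightarrow \sphere{3}$, together with the cohomology of spheres ($H^\ast(\sphere{m},\bZtwo)$ is $\bZtwo$ in degrees $0$ and $m$ and zero elsewhere), shows that for $k \ge 2$ restriction to the summands gives $H^k(\sphere{5}\vee\sphere{3},\bZtwo) \cong H^k(\sphere{5},\bZtwo) \times H^k(\sphere{3},\bZtwo)$; in particular $H^3$ and $H^5$ of the wedge are both $\cong \bZtwo$, with $H^3$ detected by restriction to the $\sphere{3}$-summand, $H^5$ detected by restriction to the $\sphere{5}$-summand, and $H^3(\sphere{5},\bZtwo) = 0$. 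For any $\alpha \in H^3(\sphere{5}\vee\sphere{3},\bZtwo)$, naturality of $\sq{2}$ along the inclusion $\sphere{5} \hookrightarrow \sphere{5}\vee\sphere{3}$ gives $\sq{2}(\alpha)|_{\sphere{5}} = \sq{2}\big(\alpha|_{\sphere{5}}\big)$; but $\alpha|_{\sphere{5}} \in H^3(\sphere{5},\bZtwo) = 0$, so $\sq{2}(\alpha)|_{\sphere{5}} = \sq{2}(0) = 0$ by \cref{lem:ptd}. Since $H^5$ of the wedge injects under restriction to the $\sphere{5}$-summand, $\sq{2}(\alpha) = 0$. Thus $\sq{2}$ is the zero map on the wedge, and combining this with the previous paragraph yields the contradiction described above.

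The main obstacle is the mod-$2$ cohomology \emph{ring} of $\CP$ -- concretely, that the generator of $H^2(\CP,\bZtwo)$ squares to the generator of $H^4(\CP,\bZtwo)$. Everything else (the suspension isomorphisms, the cohomology of spheres and wedges via Mayer--Vietoris, and the naturality bookkeeping) is standard; but this ring computation is the cup-product incarnation of ``$h$ has Hopf invariant one'', and while this is known in HoTT, invoking it cleanly -- in particular handling the passage to $\bZtwo$-coefficients correctly -- is where the real work of this section lies.
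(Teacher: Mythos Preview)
Your proposal is correct and follows essentially the same approach as the paper: distinguish the two spaces by showing that $\sq{2}$ is non-trivial on $H^3(\Susp{\CP},\bZtwo)$ (via stability and the cup-square on $\CP$, invoking Brunerie's Hopf-invariant-one computation) while it vanishes on $H^3(\sphere{5}\vee\sphere{3},\bZtwo)$. The only minor variation is in the wedge case: the paper writes $\sphere{5}\vee\sphere{3}\simeq\Susp{(\sphere{4}\vee\sphere{2})}$, desuspends via \ref{ax:susp}, and uses the general fact that cup products vanish on wedge sums, whereas you work directly on $\sphere{5}\vee\sphere{3}$ using naturality along the inclusion $\sphere{5}\hookrightarrow\sphere{5}\vee\sphere{3}$ --- both arguments are equally valid and of comparable length.
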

\begin{proof}
  Both spaces in questions are suspensions, with $\sphere{5} \vee \sphere{3} \simeq \Susp{(\sphere{4} \vee \sphere{2})}$. We consider the following diagram where $A\in\{\CP,\sphere{4} \vee \sphere{2}\}$ (and where, we remark, all cohomology groups are equivalent to $\bZtwo$).
  \[
\begin{tikzcd}[ampersand replacement=\&]
	{H^2(A,\bZtwo)} \& {H^4(A,\bZtwo)} \\
	{H^3(\Susp{A},\bZtwo)} \& {H^5(A,\bZtwo)}
	\arrow["{(-)^{2}}", from=1-1, to=1-2]
	\arrow["\sim"', from=1-1, to=2-1]
	\arrow[from=1-2, to=2-2]
	\arrow["{\sq{2}}"', from=2-1, to=2-2]
\end{tikzcd}
\]
This diagram is an instance of the suspension property for the Steenrod squares. When $A=\CP$, the squaring map is non-trivial (this was proved in HoTT by Brunerie~\cite{Brunerie16} using $\bZ$-coefficients but the proof works fine also for $\bZtwo$-coefficients). When $A=\sphere{4} \vee \sphere{2}$, the squaring map is trivial since wedge sums have trivial cup products. So $\Susp{\CP}$ has non-trivial $\sq{2}$ whereas for $\sphere{5} \vee \sphere{3}$ it is trivial, and thus we may conclude that these types cannot be equivalent.
\end{proof}
\begin{corollary}
  $\pi_4(\sphere{3}) \neq 0$
\end{corollary}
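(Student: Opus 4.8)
The plan is to exhibit the suspended Hopf map $\Susp{h} : \sphere{4} \to_\pt \sphere{3}$ as a non-zero element of $\pi_4(\sphere{3}) = \truncT{0}{\sphere{4} \to_\pt \sphere{3}}$, which of course forces the group to be non-trivial. Since $\pi_4(\sphere{3}) \neq 0$ is a proposition, I would argue by contradiction: if $\pi_4(\sphere{3})$ were trivial, then in particular $\Susp{h}$ would represent $0$, i.e.\ we would have $\ptrunc{\Susp{h} = \mathsf{const}}$, a mere pointed identification of $\Susp{h}$ with the constant map $\mathsf{const} : \sphere{4} \to_\pt \sphere{3}$. The goal is to derive a contradiction from this hypothesis.

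First I would use that the cofibre $C_{(-)}$, being a pushout, depends only on the pointed homotopy class of its input, so $\ptrunc{\Susp{h} = \mathsf{const}}$ yields $\ptrunc{C_{\Susp{h}} \simeq C_{\mathsf{const}}}$. Then I would substitute the two identifications already assembled above: on the one hand $C_{\Susp{h}} \simeq \Susp{\CP}$, since suspensions commute with cofibres and $\CP = C_h$; on the other hand $C_{\mathsf{const}} \simeq \sphere{5} \vee \sphere{3}$, the cofibre of a constant map $\sphere{4} \to_\pt \sphere{3}$ being the pushout of $\sphere{5} \leftarrow \bOne \rightarrow \sphere{3}$. Chaining these equivalences gives $\ptrunc{\Susp{\CP} \simeq \sphere{5} \vee \sphere{3}}$, contradicting the preceding Theorem -- here one uses that $\neg(\Susp{\CP} \simeq \sphere{5} \vee \sphere{3})$ is a proposition and so also rules out the truncated statement. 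Hence $\Susp{h}$ is not pointed null-homotopic, and $\pi_4(\sphere{3}) \neq 0$.

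The entire mathematical weight sits in the Theorem, whose proof in turn rests on the Steenrod square $\sq{2}$ distinguishing $\Susp{\CP}$ from $\sphere{5} \vee \sphere{3}$; there is no genuine obstacle remaining in the corollary itself. The only point demanding a little care -- and it is purely bookkeeping -- is that each manipulation is performed at the pointed level: the cofibre--suspension interchange, the computation of $C_{\mathsf{const}}$, and the unfolding of ``$\Susp{h}$ represents $0$'' into a mere pointed equality, so that the conclusion genuinely expresses non-triviality of $\pi_4(\sphere{3})$ as a group.
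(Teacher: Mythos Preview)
Your proposal is correct and follows essentially the same argument the paper sketches in the paragraph preceding the Theorem: if $\pi_4(\sphere{3})$ were trivial then $\Susp{h}$ would be merely pointed-homotopic to $\mathsf{const}$, whence $\Susp{\CP}\simeq C_{\Susp{h}}$ would be merely equivalent to $C_{\mathsf{const}}\simeq \sphere{5}\vee\sphere{3}$, contradicting the Theorem. Your explicit handling of the propositional truncations is a nice touch that the paper leaves implicit.
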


\section{Conclusions and Future Work}
\label{sec:conclusion}

In this paper, we have proved important properties of the Steenrod squares in
homotopy type theory, following a definition of Brunerie. This has necessitated
the development of a substantial theory of unordered HITs, and we hope that our
paper can serve as an illustration of the current state of synthetic homotopy
theory -- its power and its limitation.

This work suggests a number of further questions.
Regarding Steenrod squares, one would like to know that they generate all maps
$K(\bZtwo,n) \to K(\bZtwo, m)$ in an appropriate sense. Can this be proved in
homotopy type theory? What about the assertion that the equations listed in
\cref{thm:main} generate all possible relations between the Steenrod squares?

More generally, for any odd prime $p$ there should be an analogue of the Steenrod square,
namely the Steenrod reduced $p$th power
$P^n : K(\bZ / p \bZ,m) \to_\pt K(\bZ / p \bZ, m + 2n(p-1))$.
Can this even be studied in homotopy type theory? It is not clear how to even define it
without running into the problem of infinite objects.

Let us also highlight the problems discussed in \cref{sec:Einf}.
Is it possible to define the unordered join of a general finite family of types?
And is it possible to prove things like \cref{lem:fubini} more efficiently and systematically?

%
\bibliographystyle{IEEEtran} \bibliography{refs}

\end{document}